\title{Monochromatic cycles in $2$-edge-colored bipartite graphs with large minimum degree\thanks{This work was supported by NSFC (Grant Nos. 11931002 and 12371327).}}
\author{\small{Yiran Zhang\thanks{Email address: zhangyiran1125@163.com}~,
Yuejian Peng\thanks{Corresponding author. Email address: ypeng1@hnu.edu.cn}~}\\
{\small School of Mathematics, Hunan University,}\\
{\small Changsha, Hunan 410082, People's Republic of China}\\\makeatletter}
\newcommand\tabcaption{\def\@captype{table}\caption}
\date{}\makeatother
\newtheorem{theorem}{Theorem}[section]
\newtheorem{defi}{Definition}[section]
\newtheorem{lemma}[theorem]{Lemma}
\newtheorem{conjecture}{Conjecture}[section]
\newtheorem{claim}{Claim}
\newtheorem{fact}{Fact}[section]
\newtheorem{const}{Construction}[section]
\begin{document}
\maketitle
\begin{abstract}
For graphs $G$, $G_1$ and $G_2$, we write $G\longmapsto(G_1, G_2)$ if each red-blue-edge-coloring of $G$ yields a red $G_1$ or a blue $G_2$. The {\em Ramsey number} $r(G_1, G_2)$ is the minimum number $n$ such that the complete graph $K_n\longmapsto(G_1, G_2)$. There is an interesting phenomenon that  for some  graphs $G_1$ and $G_2$ there is a number $0<c<1$ such that for any graph $G$ of order  $r(G_1, G_2)$ with minimum degree $\delta(G)>c|V(G)|$, $G\longmapsto(G_1, G_2)$. When we focus on bipartite graphs,  the  {\em  bipartite Ramsey number} $br(G_1, G_2)$ is the minimum number $n$ such that the complete bipartite graph $K_{n, n}\longmapsto(G_1, G_2)$. Previous known related results on  cycles are  on the diagonal case ($G_1=G_2=C_{2n}$). In this paper, we obtain an asymptotically tight bound for all off-diagonal cases, namely, we determine an asymptotically tight bound on the minimum degree of a balanced bipartite graph $G$ with order $br(C_{2m}, C_{2n})$ in each part such that $G\longmapsto(C_{2m}, C_{2n})$. We show that for every $\eta>0$, there is an integer $N_0>0$ such that for any $N>N_0$ the following holds: Let $\alpha_1>\alpha_2>0$ such that $\alpha_1+\alpha_2=1$. Let $G[X, Y]$ be a balanced bipartite graph on $2(N-1)$ vertices with minimum degree  $\delta(G)\geq(\frac{3}{4}+3\eta)(N-1)$. Then for each red-blue-edge-coloring of $G$, either there exist red even cycles of each length in $\{4, 6, 8, \ldots, (2-3\eta^2)\alpha_1N\}$, or there exist blue even cycles of each length in $\{4, 6, 8, \ldots, (2-3\eta^2)\alpha_2N\}$. A construction is given to show the bound $\delta(G)\geq(\frac{3}{4}+3\eta)(N-1)$ is asymptotically tight. Furthermore, a stability result is given.
\\
\noindent{\bf Keywords:} bipartite Ramsey number; cycles; minimum degree; Szem\'{e}redi's Regularity Lemma

\end{abstract}

\section{Introduction}

For $r\geq2$ and graphs $G$, $G_1$, $G_2$,  we write $G\longmapsto(G_1, G_2)$ if each 2-edge-coloring of $G$ yields a monochromatic $G_i$ for some $i\in[2]$. The {\em Ramsey number} $r(G_1, G_2)$ is the minimum number $n$ such that the complete graph $K_n\longmapsto(G_1, G_2)$. If $G_1=G_2$, we say that $G$ arrows $G_1$, which we call {\em diagonal case}. The Ramsey numbers of cycles was determined independently by Bondy and Erd\H{o}s \cite{Bond1973}, Faudree and Schelp \cite{FaSch1974}, and Rosta\cite{Rosta1, Rosta2}. These results showed that for $m\geq n\geq3$,
\[r(C_m, C_n)=\begin{cases}
2m-1, & if~n~is~odd~and~(m,~n)\neq(3,3), \\
m+\frac{n}{2}-1, & if~m~and~n~are~even~and~(m,~n)\neq(4,4), \\
\max\{m+\frac{n}{2}-1,~2n-1\}, & if~m~is~odd~and~n~is~even.
\end{cases}\]

A {\em connected $k$-matching} in a graph $G$, denoted by $CM_k$, is a matching with $k$ edges lying in a component of $G$, where a component of $G$ is a maximal connected subgraph of $G$. In \cite{Luc1999}, {\L}uczak firstly employed the following approach to show that $r(C_n, C_n, C_n)\leq(4+o(1))n$ for large $n$. First show the existence of a large monochromatic connected matching in the reduced graph  obtained by applying Szem\'{e}redi's Regularity Lemma, then guaranteed by the Regularity Lemma, this monochromatic connected matching in the reduced graph can be extended to a long monochromatic cycle in the original graph. Letzter \cite{letz} showed further that obtaining asymptotic Ramsey numbers of cycles can be
 reduced to determining Ramsey numbers of  monochromatic connected matchings.

Schelp \cite{Sch2012} observed that for some sparse graphs $G$, such as cycles and paths, a  graph $H$ of order $r(G, G)$ with large minimum degree also arrows $G$.
In 2007, Nikiforov and Schelp showed the following  result for cycles.

For sets $X$ and $Y$, let $X\uplus Y$ denote the disjoint union of $X$ and $Y$.

\begin{theorem}[Nikiforov and Schelp \cite{NS2008}]\label{NS}
If n is sufficiently large and G is a graph of order $2n-1$ with minimum degree $\delta(G)\geq(2-10^{-6})n$,
then for each {\rm2}-edge-coloring $E(G)=E(R)\uplus E(B)$, either $C_t\subset E(R)$ for all $t\in[3, n]$ or $C_t\subset E(B)$ for all $t\in[3, n]$.
\end{theorem}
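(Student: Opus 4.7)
The plan is to combine Szemer\'edi's Regularity Lemma with {\L}uczak's monochromatic connected-matching technique, suitably enhanced to deliver \emph{odd} cycles as well as even ones. I would first apply the Regularity Lemma to $G$ with parameters $\varepsilon \ll d \ll 10^{-6}$, producing an $\varepsilon$-regular partition $V_0, V_1, \ldots, V_k$ of $V(G)$ with $|V_0|$ negligible. Let $\Gamma$ denote the reduced graph on $[k]$ in which $ij$ is an edge iff $(V_i, V_j)$ is $\varepsilon$-regular of density at least $d$, and color each such edge red or blue according to the majority color among edges of $G$ between $V_i$ and $V_j$. Standard computations show that the hypothesis $\delta(G) \geq (2 - 10^{-6})n$ transfers to $\delta(\Gamma) \geq (1 - O(10^{-6}))k$, so $\Gamma$ is nearly complete.

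Second, I would prove that one color of $\Gamma$, say red, contains a connected subgraph $H$ such that (i) $H$ is non-bipartite and in fact contains a triangle, and (ii) $H$ contains a matching $M$ of size at least $(1/4 + \mu)k$ for some $\mu > 0$ depending only on $10^{-6}$. The existence of a large monochromatic connected matching in a nearly-complete 2-colored graph is classical, obtainable via connected-matching Ramsey numbers or by applying the Erd\H{o}s--Gallai theorem to the denser color class. The serious point is ruling out the ``near-bipartite'' scenario in both colors: if the large matchings in both colors were confined to near-bipartite subgraphs with balanced bipartitions $A_R \cup B_R$ and $A_B \cup B_B$, then a typical vertex $v \in V(G)$ would be pinned to a definite side of each bipartition, and counting red plus blue edges at $v$ would force a total degree deficit exceeding $10^{-6}(2n-1)$, contradicting the minimum-degree hypothesis. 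This is precisely where the constant $10^{-6}$ is consumed.

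Third, I would embed the whole family $\{C_t : 3 \le t \le n\}$ in the red subgraph of $G$. For long cycles, I arrange the matching edges of $M$ in a cyclic order using a red connected walk in $H$ and apply the standard regular-pair embedding: traversing $2|M|$ clusters with sufficient slack inside each cluster yields red cycles of every even length between a constant and $n$. For odd cycles of large length, I splice an odd loop (the guaranteed red triangle $ijk$ in $H$) into the long embedding at a common cluster, gaining an odd parity while keeping the bulk of the cycle intact; varying the splice location and length produces red odd cycles of every odd length up to $n$. Short cycles of lengths $3, 4, \ldots, O(1)$ are obtained directly: the red triangle in $H$ lifts to red $C_3$ in $G$ by regularity, and short cycles of every small length follow by iterating short walks inside the three clusters of this triangle.

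The main obstacle is the second step: obtaining a non-bipartite monochromatic connected subgraph that simultaneously carries a large matching. Extracting the matching in one color is relatively easy; the non-bipartite structure --- and more strongly, the presence of a red triangle needed for $C_3$ --- demands a careful stability analysis that tightly uses all the slack in $\delta(G) \geq (2 - 10^{-6})n$. The cycle-embedding step, by contrast, is routine once the structural step is complete; the only bookkeeping left is verifying how many vertices per cluster are reserved for the longest cycle of length $n$ and how the odd and even constructions share a common splicing cluster.
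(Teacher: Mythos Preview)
The theorem you are attempting to prove is not proved in this paper at all---it is Theorem~\ref{NS}, stated and attributed to Nikiforov and Schelp~\cite{NS2008} purely as background and motivation. The paper's own contributions are Theorems~\ref{1} and~\ref{0}, which concern \emph{balanced bipartite} host graphs and \emph{even} cycles only; there the authors establish a connected-matching result directly (via K\"onig--Egerv\'ary and a lengthy case analysis of monochromatic components, Lemmas~\ref{2}--\ref{4}) and then apply the Regularity Lemma in the standard {\L}uczak fashion. No odd-cycle or non-bipartite structural analysis of the kind you outline appears anywhere in this paper, so there is no proof here to compare your proposal against.

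As for the proposal on its own merits: the overall architecture (regularity, reduced graph, monochromatic connected matching plus a triangle in the same component, then embedding with a parity splice) is the right shape for a Nikiforov--Schelp-type statement. The weakest link is your second step. You assert that if the large connected matching in each color lived in a near-bipartite subgraph, then a degree count at a typical vertex would produce a deficit exceeding $10^{-6}(2n-1)$; but you have not said how the two bipartitions $A_R\cup B_R$ and $A_B\cup B_B$ interact, nor why the resulting shortfall is of order $10^{-6}n$ rather than merely $O(\varepsilon n)$ or $O(dn)$ (losses already absorbed by the regularity step). Making this quantitative is exactly where the specific constant $2-10^{-6}$ must be used, and your sketch does not yet do that work. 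The embedding step for short odd cycles also needs more care than ``iterating short walks inside the three clusters'': a single $\varepsilon$-regular pair does not by itself yield a $C_3$, so you must argue that the red triangle in $\Gamma$ genuinely lifts to a red $C_3$ in $G$ via common neighborhoods across the three pairs.
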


Note that $r(C_n, C_n)=2n-1$ if $n\geq5$ is odd. And results  as in Theorem \ref{NS} are more interesting and challenging when we require the order of $G$ starting from $r(C_n, C_n)$.

In 2010, Li, Nikiforov and Schelp proposed the following conjecture.
\begin{conjecture}[Li, Nikiforov and Schelp \cite{LNS2010}]\label{Nik}
Let $n\geq4$ and let $G$ be an $n$-vertex graph with minimum degree $\delta(G)>\frac{3}{4}n$. If $E(G)=E(B)\uplus E(R)$ is a {\rm2}-edge-coloring of G, then either $C_k\subseteq E(B)$ or $C_k\subseteq E(R)$ for all $k\in[4,\lceil\frac{n}{2}\rceil]$.
\end{conjecture}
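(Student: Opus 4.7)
The plan is to adapt {\L}uczak's framework via Szemer\'edi's Regularity Lemma, as Letzter further developed for monochromatic cycles, reducing to a monochromatic connected matching question on a reduced graph. First, I would apply the Regularity Lemma with parameters $\varepsilon\ll d\ll 1$ to partition $V(G)$ into equitable clusters $V_1,\dots,V_t$ and form the reduced graph $R$ on $t$ vertices, keeping only $\varepsilon$-regular pairs of density at least $d$. The minimum degree assumption transfers to $\delta(R)\geq(3/4-O(d))t$, and each edge of $R$ is colored red or blue according to the majority color between the two corresponding clusters.

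The key structural lemma to establish on the reduced graph is: in any $2$-edge-coloring of a graph $H$ with $\delta(H)>3|H|/4$, some color class contains a connected subgraph $H'$ supporting a matching of size at least $|H|/4+\Omega(|H|)$, with linear minimum degree inside. Heuristically, if no such monochromatic connected structure exists, then each color class is forced to be "nearly bipartite" across a common partition, but the minimum degree condition exceeds what any such partition permits, giving a contradiction. The constant $3/4$ is necessary, as witnessed by the extremal construction obtained by blowing up a tripartition of sizes $n/4, n/4, n/2$ and 2-coloring the three bipartite pieces monochromatically so that each monochromatic component misses roughly half the vertices.

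Given this structural lemma, regularity yields a connected monochromatic subgraph $G^*$ of $G$ on $(1/2+\Omega(1))n$ vertices equipped with a connected matching whose matching edges correspond to super-regular pairs. For every target length $k\in[4,\lceil n/2\rceil]$, I would then construct a monochromatic $C_k$ by walking along the matching edges and invoking the standard fact that a super-regular pair admits a path of any prescribed length between prescribed endpoints. Odd cycle lengths require that the monochromatic structure is non-bipartite on the cluster level, which follows since a purely bipartite monochromatic subgraph of $R$ would be too small to contain the required connected matching.

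The main obstacle is two-fold. The primary difficulty lies in the structural lemma: showing that $\delta(H)>3|H|/4$ forces one color class to contain a \emph{connected} matching (not merely a large matching) of size exceeding $|H|/4$, which is essentially the reduced-graph heart of Conjecture~\ref{Nik}. The secondary difficulty is reaching the exact endpoint $\lceil n/2\rceil$ and the short lengths near $4$: regularity only delivers asymptotic bounds, so a stability analysis is required near the extremal construction to recover the precise endpoint, while very short cycles typically demand a direct argument in $G$ using only the min-degree condition, bypassing regularity altogether.
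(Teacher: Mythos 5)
The statement you are asked about is Conjecture \ref{Nik}, which the paper does not prove: it is quoted as an open conjecture of Li, Nikiforov and Schelp, known only for $k\in[4,(\frac{1}{8}-o(1))n]$ and, by Benevides, {\L}uczak, Scott, Skokan and White, for large $n$ up to one exceptional colouring. So there is no proof in the paper to compare against, and your proposal would have to stand as a complete argument on its own. It does not: you explicitly defer the central step, namely that $\delta(H)>\frac{3}{4}|H|$ forces a monochromatic \emph{connected} matching of size about $|H|/4$, which is exactly the reduced-graph heart of the conjecture, and you concede that the exact endpoint $\lceil\frac{n}{2}\rceil$, the very short lengths, and small $n$ are beyond a regularity argument. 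A regularity-based plan can at best deliver an asymptotic statement for large $n$, whereas the conjecture is asserted for every $n\geq4$.

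Beyond the deferred steps, one step of your sketch is substantively wrong. You claim odd cycle lengths are obtainable because a purely bipartite monochromatic subgraph of the reduced graph would be too small to contain the required connected matching. But a matching is itself bipartite, and a connected matching of size $t/4$ sits comfortably inside a bipartite subgraph of a reduced graph on $t$ clusters; bipartiteness of the relevant colour class is precisely the obstruction isolated by Benevides et al.\ (the ``one special colouring'') and is the reason Theorem \ref{BK} carries the alternative conclusion guaranteeing only \emph{even} cycles in one colour. Any correct approach must either analyse that extremal colouring separately or weaken the conclusion to even cycles in that case; your sketch does neither, so even as an asymptotic programme it would fail at the odd-length cycles.
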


Li, Nikiforov and Schelp \cite{LNS2010} also showed that if $n$ is large enough and $k\in[4,(\frac{1}{8}-o(1))n]$, $G$ in Conjecture \ref{Nik} arrows $C_k$. Benevides, {\L}uczak, Scott, Skokan and White \cite{BLS2012} proved that for large n, Conjecture \ref{Nik} is correct except one special 2-edge-coloring of $G$, and they proposed the following conjecture.

\begin{conjecture}[Benevides, ${\L}$uczak, Scott, Skokan and White \cite{BLS2012}]\label{Ben}
Let $G$ be an $n$-vertex graph with minimum degree  $\delta(G)\geq\frac{3}{4}n$, where $n=3t+r$, $r\in\{0, 1, 2\}$. Then each $2$-edge-coloring of $G$ yields a monochromatic cycle of length at least $2t+r$.
\end{conjecture}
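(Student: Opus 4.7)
The plan is to follow the \L{}uczak-style connected-matching approach, as extended by Benevides, \L{}uczak, Scott, Skokan and White: reduce via Szem\'{e}redi's Regularity Lemma to finding a large monochromatic connected matching in a reduced graph, and then handle the near-extremal configurations through a stability argument.

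First, I would apply the degree form of the Regularity Lemma to the 2-edge-colored graph $G$, obtaining a partition $V_0, V_1, \ldots, V_k$ with $|V_0|\le\varepsilon n$ and all pairs $\varepsilon$-regular in both colors. Form the reduced graph $R$ on the clusters and color each edge of $R$ by its majority color on the corresponding pair. The hypothesis $\delta(G)\ge 3n/4$ transfers to $\delta(R)\ge(3/4-2\varepsilon)k$, and a monochromatic connected matching $M$ in $R$ of size $s$ translates, via the standard path/cycle embedding in regular pairs, to a monochromatic cycle in $G$ of length approximately $2s\cdot(n/k)$. Thus it suffices to produce a monochromatic connected matching of size $(1/3-o(1))k$ in $R$.

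Second, I would prove the key combinatorial lemma: every 2-edge-coloring of a graph $H$ on $k$ vertices with $\delta(H)\ge(3/4-\varepsilon)k$ contains a monochromatic connected subgraph with a matching of size at least $(1/3-\varepsilon')k$. By contradiction, suppose both color classes fail this threshold. By the Gallai--Edmonds structure theorem, each monochromatic component then admits a small vertex cover; combining these covers across both colors and invoking $\delta(H)\ge(3/4-\varepsilon)k$ forces $H$ into a small perturbation of the extremal construction underlying the $3/4$ threshold (roughly three parts of size $\approx k/3$ colored in a specific pattern). This asymptotic lemma, combined with the regularity embedding, yields a monochromatic cycle in $G$ of length at least $(2/3-o(1))n\ge 2t+r-o(n)$.

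Third, to sharpen the $-o(n)$ loss to the exact bound $2t+r$, I would run a stability dichotomy. Either (a) $G$ is $\gamma$-far from every extremal coloring, in which case a stability version of the matching lemma gives a monochromatic connected matching of size $(1/3+\gamma')k$ and hence a cycle of length at least $(2/3+\gamma')n\gg 2t+r$; or (b) $G$ is close to an extremal configuration, in which case the minimum-degree condition pins down three explicit vertex classes, and a direct structural construction builds a monochromatic cycle of length exactly $2t+r$, absorbing the exceptional vertices in $V_0$ and handling each residue $r\in\{0,1,2\}$ via a short rotation/parity adjustment.

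The main obstacle is case (b), the near-extremal regime, where the longest monochromatic cycle in the extremal construction is exactly $2t+r$, leaving no asymptotic slack whatsoever; eliminating the $o(n)$ loss requires an absorbing-type argument that simultaneously handles the exceptional set produced by the Regularity Lemma, the parity constraint induced by residue $r$, and possible small imbalances between the three extremal parts. A secondary obstacle is formulating the connected-matching lemma with the sharp constant $(1/3-o(1))k$ matching the $3/4$ degree threshold, which demands a careful Gallai--Edmonds case analysis on the joint structure of the red and blue components and is where most of the technical bookkeeping lies.
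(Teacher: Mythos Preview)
The statement you are attempting to prove is Conjecture~\ref{Ben}, which the paper merely \emph{cites} as background; the paper does not prove it. It is stated as a conjecture of Benevides, {\L}uczak, Scott, Skokan and White, and the paper goes on to note that Balogh, Kostochka, Lavrov and Liu later confirmed it for large $n$ (Theorem~\ref{BK}). The paper's own results (Theorems~\ref{1} and~\ref{0}) concern balanced \emph{bipartite} host graphs and off-diagonal connected matchings/cycles, not this conjecture. So there is no ``paper's own proof'' to compare against.

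That said, your outline is a plausible sketch of the approach actually used by Balogh--Kostochka--Lavrov--Liu, and you have correctly identified where the real difficulty lies: the near-extremal case~(b). Your description there is too vague to count as a proof plan. Saying that ``a direct structural construction builds a monochromatic cycle of length exactly $2t+r$'' and that one will ``absorb the exceptional vertices'' and handle parity ``via a short rotation/parity adjustment'' does not engage with the actual obstacles: in the extremal configuration the desired cycle has length \emph{exactly} $2t+r$, so there is zero slack, and the exceptional set from regularity together with a few misplaced vertices can genuinely destroy the cycle unless you specify a concrete absorbing or cleaning mechanism. You would need to state precisely what the extremal structure is, how vertices with atypical neighbourhoods are reassigned, and how the cycle is threaded through the three parts with the correct length and parity. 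Without that, step~(b) is a restatement of the problem rather than a plan for solving it.
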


In \cite{Sch2012}, Schelp formulated  the following question: for which graphs $H$ there exists a constant $c\in(0,1)$ such that for any graph $G$ of order  $r(H, H)$ with $\delta(G)>c|V(G)|$, $G\longmapsto(H, H)$. Meanwhile, Schelp posed the following conjecture.

\begin{conjecture}[Schelp \cite{Sch2012}]\label{Schelp}
Let $t=r(P_n, P_n)$ with n large. If $G$ is a graph of order $t$ with minimum degree  $\delta(G)>\frac{3}{4}t$, then $G\longmapsto (P_n, P_n)$.
\end{conjecture}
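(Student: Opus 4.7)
The plan is to adapt the regularity-plus-connected-matching framework of \L uczak that was already highlighted in the introduction. First, apply Szemer\'edi's Regularity Lemma to $G$ with parameters $\varepsilon\ll\mu\ll 1$ to obtain an $\varepsilon$-regular partition $V(G)=V_0\cup V_1\cup\cdots\cup V_L$ with equal-size clusters, and form the reduced graph $R$ on the $L$ clusters whose edges are the $\varepsilon$-regular pairs of density at least $2\varepsilon$. A standard degree calculation transfers the hypothesis $\delta(G)>\frac{3}{4}t$ to $\delta(R)\geq(\frac{3}{4}-\mu)L$. Assign to each edge of $R$ its majority color (red if a majority of the original edges between the two clusters are red, blue otherwise), producing a 2-edge-colored auxiliary graph.

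Next, reduce the target to a statement about monochromatic connected matchings in $R$. Since $r(P_n,P_n)=n+\lfloor n/2\rfloor-1$, we have $t\approx\tfrac{3n}{2}$ and cluster size $\approx t/L$. By the standard regularity embedding lemma (as used by \L uczak and later by Figaj--\L uczak and Gy\'arf\'as--Ruszink\'o--S\'ark\"ozy--Szemer\'edi), a monochromatic $CM_k$ in $R$ can be blown up into a monochromatic path in $G$ on approximately $2k\cdot(t/L)$ vertices. So it suffices to prove that every 2-edge-coloring of $R$ contains a monochromatic $CM_k$ with $k\geq(\tfrac{1}{3}+\gamma)L$ for some small $\gamma>0$, which yields a monochromatic path of length at least $\tfrac{2t}{3}(1+o(1))\geq n$.

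The heart of the proof is therefore the following combinatorial statement about the reduced graph: in any 2-edge-coloring of a graph $H$ on $L$ vertices with $\delta(H)>(\tfrac{3}{4}-\mu)L$, there is a monochromatic connected matching of size at least $(\tfrac{1}{3}+\gamma)L$. I would examine the largest monochromatic component, say a red component $C_R$ on $r L$ vertices. If $r\geq\tfrac{2}{3}+\gamma$, the minimum degree forced inside $C_R$ together with a Tutte--Berge / defect argument yields an essentially spanning matching of $C_R$. Otherwise, every monochromatic component in both colors has order less than $(\tfrac{2}{3}+\gamma)L$; the minimum degree condition $\tfrac{3}{4}L$ is then tight enough to force, via a stability analysis, that the coloring is close to the natural extremal construction in which $V(H)$ splits into two roughly equal parts each spanning a monochromatic clique of one color with the opposite color across, and in such a near-extremal structure one exhibits the required $CM$ in the cross-color explicitly.

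The main obstacle is this last step executed at the \emph{sharp} threshold $\tfrac{3}{4}$: the extremal configurations are delicate and one must rule out balanced colorings in which both color classes fragment into several medium-sized components, which requires genuine stability analysis rather than an averaging argument. Even granting the $(\tfrac{1}{3}+\gamma)L$ connected-matching bound, obtaining the path length exactly $n$ (as opposed to $(1-o(1))n$) will demand a final absorption step using the strong minimum degree to extend the blown-up path over the $\varepsilon N$ exceptional vertices in $V_0$ and the unmatched clusters. The regularity reduction and the connected-matching-to-path expansion themselves are by now routine adaptations of \L uczak's method.
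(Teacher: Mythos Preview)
The paper does not prove this statement. It records Schelp's conjecture purely as background, remarks that Gy\'arf\'as and S\'ark\"ozy obtained an asymptotic form of it via the connected-matching/regularity method, and then cites Balogh, Kostochka, Lavrov and Liu for its confirmation in the even-path case. There is no proof in the paper to compare your attempt against.

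As for the content of your sketch: what you have written is precisely the Gy\'arf\'as--S\'ark\"ozy route to the \emph{asymptotic} version, and you yourself correctly identify the two points where it falls short of the conjecture as stated. After applying the Regularity Lemma the reduced graph $R$ satisfies only $\delta(R)\ge(\tfrac34-\mu)L$, strictly below the threshold, so the connected-matching lemma you need must be proved in the sub-threshold regime where the extremal colorings genuinely obstruct a naive argument; calling this a ``stability analysis'' names the difficulty without resolving it. And the \L uczak blow-up produces a monochromatic path on $(1-o(1))n$ vertices, not $n$; closing that gap is exactly what the Balogh--Kostochka--Lavrov--Liu paper does with a substantial stability-plus-absorption argument, and your single sentence about ``a final absorption step'' does not supply it. So your proposal is a sound outline of the known asymptotic result, but not a proof of the conjecture at the sharp $\tfrac34$ threshold.
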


Gy\'{a}rf\'{a}s and S\'{a}rk\"{o}zy \cite{SMM2012} determined the Ramsey number $r(S_t, n_1K_2, n_2K_2)$, combining with Szem\'{e}redi's Regularity Lemma, they obtained an asymptotic form of Conjecture \ref{Schelp}. Balogh, Kostochka, Lavrov, and Liu \cite{BKLL2022} confirmed Conjecture \ref{Ben}  for large $n$, and Conjecture \ref{Schelp}  for all even paths. In fact, they proved the following stronger result.

\begin{theorem}[Balogh, Kostochka, Lavrov, and Liu \cite{BKLL2022}]\label{BK}
There exists an integer $n_0$ with the following property. Let $n=3t+r>n_0$, where $r\in\{0,1,2\}$.
Let $G$ be an $n$-vertex graph with minimum degree $\delta(G)\geq\frac{3n-1}{4}$. Then for any $2$-edge-coloring of $G$, either there are cycles of every length in $\{3, 4, \ldots, 2t+r\}$ of the same color, or there are cycles of every even length in $\{4, 6, \ldots, 2t+2\}$ of the same color.
\end{theorem}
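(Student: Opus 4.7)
The plan is to follow the {\L}uczak framework: apply Szem\'{e}redi's Regularity Lemma to $G$, extract a structured monochromatic subgraph in the reduced graph, then lift it to monochromatic cycles of every required length in $G$. Fix small auxiliary constants $\varepsilon \ll d \ll 1$, apply the Regularity Lemma, and obtain an $\varepsilon$-regular partition $V_0, V_1, \ldots, V_k$. Form the reduced graph $R$ on the clusters by joining $V_iV_j$ whenever the pair is $\varepsilon$-regular of density at least $d$ in $G$, and color each edge of $R$ by the majority color of the underlying pair. A standard counting argument shows that the hypothesis $\delta(G) \geq (3n-1)/4$ transfers to $R$ in the form $\delta(R) \geq (3/4 - o(1))k$.

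The heart of the proof is a structural dichotomy for the $2$-edge-colored reduced graph, to be obtained through a Ramsey-type lemma for monochromatic connected matchings: in any $2$-edge-coloring of a graph $H$ with $\delta(H) \geq (3/4 - o(1))|V(H)|$, either (i) some color contains a connected matching $M$ of size $\geq (1/3-o(1))|V(H)|$ whose containing monochromatic component is \emph{non-bipartite}, or (ii) some color contains a connected matching $M$ of size at least $(1/3-o(1))|V(H)|+1$ whose containing monochromatic component is \emph{bipartite}. I would prove this by a case analysis on whether each color's largest monochromatic component is bipartite, using Gy\'{a}rf\'{a}s-type covering arguments for $2$-edge-colored graphs together with the $3k/4$ minimum-degree hypothesis; the ``$+1$'' in case (ii) is the parity slack that will ultimately account for the $2t+2$ versus $2t+r$ asymmetry in the theorem.

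Lifting is by now standard. A monochromatic connected matching $M$ of size $s$ in $R$ produces, inside $G$, a monochromatic connected subgraph in which one can find paths of essentially every length up to $(2-o(1))s\cdot(n/k)$ joining any two vertices of appropriate clusters, using $\varepsilon$-regularity within each matching edge and connectivity within the containing component. In case (i), the witnessing odd closure inside the non-bipartite component supplies closures of both parities, yielding monochromatic cycles of \emph{every} integer length in $\{3, 4, \ldots, 2t+r\}$. In case (ii), only even closures are available, but the slightly larger matching size $\geq (1/3-o(1))k + 1$ pushes the attainable even lengths up to $\{4, 6, \ldots, 2t+2\}$.

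The main obstacle is twofold. First, proving the connected-matching dichotomy with the sharp ``$+1$'' gap at exactly the $3k/4$ threshold requires a delicate extremal analysis: one must rule out mixed configurations in which a color class is nearly bipartite but still contains short odd cycles that refuse to extend to the required length. Second, converting the asymptotic lower bound $(2-o(1))t$ on the cycle length into \emph{every} integer length up to the precise value $2t+r$ (respectively every even length up to $2t+2$) requires a length-adjustment lemma inside a super-regular bipartite pair, combined with an absorption argument for the exceptional vertices of $V_0$ that directly invokes the minimum-degree hypothesis. The parity arithmetic among the three cases $r \in \{0,1,2\}$ will need to be tracked throughout this absorption step.
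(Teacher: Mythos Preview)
The paper does not contain a proof of this statement. Theorem~\ref{BK} is quoted from Balogh, Kostochka, Lavrov, and Liu~\cite{BKLL2022} purely as background and motivation; the present paper's own contributions are Theorems~\ref{1} and~\ref{0}, which concern the bipartite off-diagonal setting and are proved in Sections~2--4. There is therefore no proof in this paper to compare your proposal against.

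That said, a brief comment on the proposal itself: your outline captures the high-level architecture of the {\L}uczak method, and the connected-matching dichotomy you describe (non-bipartite component of size $\geq (1/3-o(1))k$ versus bipartite component of size $\geq (1/3-o(1))k+1$) is indeed the right structural target. However, as you yourself flag, the regularity-lemma route on its own only delivers an \emph{asymptotic} conclusion, not the exact cycle lengths $2t+r$ and $2t+2$ demanded by the theorem. The actual proof in~\cite{BKLL2022} closes this gap not by a single absorption step but by a full \emph{stability analysis}: one first proves the approximate version via regularity, then shows that any near-extremal $2$-coloring must be close to one of a small number of explicit extremal configurations, and finally handles each such configuration by a direct (and rather intricate) argument. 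Your sketch does not mention this stability-plus-extremal-case layer, and without it the parity bookkeeping and the precise endpoints $2t+r$, $2t+2$ cannot be reached from the $(1-o(1))$ slack that regularity leaves behind.
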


We may change the host graph from a complete graph to a complete bipartite graph. The {\em bipartite Ramsey number} $br(G_1, G_2, \ldots, G_r)$ is the minimum number $N$ such that the complete bipartite graph $K_{N, N}\longmapsto(G_1, G_2, \ldots, G_r)$. If $G_1=G_2=\dots=G_k=G$, then simplify it as $br^k(G)$. Let $G$ be a bipartite graph with partition $V_1\uplus V_2$. For $X\subseteq V_1$, $Y\subseteq V_2$, let $G[X,Y]$ denote the bipartite subgraph of $G$ induced by $X\uplus Y$. We call
$G[V_1, V_2]$ a {\em balanced bipartite graph} if $|V_1|=|V_2|$. The study of bipartite Ramsey number was initiated in the early 1970s by Faudree and Schelp \cite{FS}, and Gy\'arf\'as and Lehel \cite{GL}. They determined the bipartite ramsey numbers of paths. Applying {\L}uczak's method, i.e., combining their result on paths and Szem\'{e}redi's  regularity lemma, one can obtain the asymptotic values of bipartite Ramsey numbers of cycles, one can also see the paper of Shen, Lin and Liu \cite{SLL} for a more general result including cycles.
Buci\'c, Letzter and Sudakov showed that $br^3(C_{2n})=(3+o(1))n$ in \cite{BLS}, and $br^k(C_{2n})\le (2k-3+o(1))n$ for $k\ge 5$ and $br^4(C_{2n})=(5+o(1))n$ in \cite{BLS1}. Liu and Peng \cite{LiuP} gave an asymptotic value of  $br(C_{2\lfloor\alpha_1n\rfloor}, \dots, C_{2\lfloor\alpha_rn\rfloor})$ when $r\ge 3$, $\alpha_1, \alpha_2>0$ and $\alpha_{j+2}\ge [(j+2)!-1]\sum_{i=1}^{j+1}\alpha_i$ for $1\le j\le r-2$. Luo and Peng \cite{LP} gave an asymptotic value of $br(C_{2\lfloor\alpha_1n\rfloor}, C_{2\lfloor\alpha_2n\rfloor}, C_{2\lfloor\alpha_3n\rfloor})$ for any $\alpha_1, \alpha_2, \alpha_3>0$.
Recently, DeBiasio and Krueger \cite{DBK2020} studied a bipartite version of Schelp's question and they obtained the following.
\begin{theorem}[DeBiasio and Krueger \cite{DBK2020}]\label{DK}
Let $G$ be a balanced bipartite graph of order $2n$. If $\delta(G)\geq\frac{3}{4}n$, then $G\longmapsto (CM_{n \over 2}, CM_{n \over 2})$.
\end{theorem}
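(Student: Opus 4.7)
The plan is to argue by contradiction: suppose $E(G) = E(R) \uplus E(B)$ is a $2$-edge-coloring admitting no monochromatic connected matching of size $\tfrac{n}{2}$, and derive a contradiction by combining K\"{o}nig's theorem (applied component-by-component) with the hypothesis $\delta(G)\geq \tfrac{3n}{4}$.

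First, since each color class $G_c$ is bipartite, K\"{o}nig's theorem applies component-wise: for $c \in \{R,B\}$ and every component $C$ of $G_c$, $\mu(C) = \tau(C)$. By our standing assumption every $\mu(C) < \tfrac{n}{2}$, so each $C$ admits a vertex cover $T(C) = T^X(C) \uplus T^Y(C)$ of size strictly less than $\tfrac{n}{2}$; writing $F^X(C) := (C\cap X) \setminus T^X(C)$ and $F^Y(C) := (C\cap Y) \setminus T^Y(C)$ for the ``free'' sides, we see that no color-$c$ edge joins $F^X(C)$ to $F^Y(C)$. Using the hypothesis $\delta(G) \geq \tfrac{3n}{4}$ together with the elementary observation that two vertices $x, x' \in X$ with $d_R(x) + d_R(x') > n$ must lie in the same red component (their $Y$-neighborhoods necessarily overlap), I would argue in a Gy\'{a}rf\'{a}s-type fashion that some color---say red---has a component $D$ with $|D\cap X|, |D\cap Y| \geq \tfrac{n}{2}$: otherwise the $X$- and $Y$-sides split into monochromatic components whose sizes are incompatible with the degree budget $\tfrac{3n}{4}$.

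Given such $D$, set $T := T(D)$, $F^X := F^X(D)$, $F^Y := F^Y(D)$, so $|T|<\tfrac{n}{2}$ and $|F^X|,|F^Y|\geq \tfrac{n}{2}-|T|$. The K\"{o}nig-cover constraint forces every $G$-edge between $F^X$ and $F^Y$ to be blue; moreover, each $v \in F^X$ satisfies $d_R(v) \leq |T^Y(D)|$, so $d_B(v) \geq \tfrac{3n}{4} - |T^Y(D)| > \tfrac{n}{4}$, and symmetrically for $v \in F^Y$. A Hall/defect-K\"{o}nig argument inside the dense bipartite blue graph $B[F^X, F^Y]$ now yields a blue matching of size $\geq \tfrac{n}{2}$, and a ``stitching'' step using the residual blue edges from $F^X \cup F^Y$ into $T$ ensures that this matching lives in a single blue component---contradicting our starting assumption.

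The main obstacle is the Gy\'{a}rf\'{a}s-style step producing a monochromatic component with both sides $\geq \tfrac{n}{2}$: without the degree hypothesis this statement can fail, and one must carefully trade off the sizes of the covers $T(C)$ against the ``leftover'' degree $\tfrac{3n}{4}-|T^Y(C)|$ that vertices in $F^X(C)$ must spend on the other color. A secondary technical obstacle is the final connectivity step, i.e., extracting a blue matching of size $\tfrac{n}{2}$ that lies in a single blue component (rather than splitting across several); this is where the surplus blue degree $>\tfrac{n}{4}$ into the cover vertices $T$ is essential, allowing distinct pieces of the blue matching to be amalgamated through shared neighbors in $T$.
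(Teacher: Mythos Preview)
The paper does not prove this theorem. Theorem~\ref{DK} is stated and attributed to DeBiasio and Krueger \cite{DBK2020} as background; no proof appears anywhere in the paper. The paper's own contribution is the \emph{off-diagonal} analogue, Theorem~\ref{1}, which concerns $G\longmapsto(CM_m,CM_n)$ with $m>n$ on $2(m+n-1)$ vertices. So there is no ``paper's own proof'' of Theorem~\ref{DK} to compare against.

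That said, your outline is in the same spirit as the paper's proof of Theorem~\ref{1}: find a large monochromatic component (the paper's Lemma~\ref{2}), apply K\"onig to that component, and then use the uncovered vertices to build a connected matching in the other colour (Lemmas~\ref{3} and~\ref{4}). Two remarks on your sketch as written. First, the step ``a Hall/defect-K\"onig argument inside $B[F^X,F^Y]$ yields a blue matching of size $\geq n/2$'' is not justified: your lower bound $d_B(v)\geq \tfrac{3n}{4}-|T^Y(D)|$ counts \emph{all} blue neighbours of $v\in F^X$ in $Y$, not just those in $F^Y$, and you have no lower bound on $|F^X|$ or $|F^Y|$ beyond $|D\cap X|-|T^X|$, which could be well below $n/2$. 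The paper (for the off-diagonal case) handles this by a careful case analysis on the sizes of $T_1,T_2$ and by repeatedly invoking pigeonhole on pairs of vertices to force common neighbours, rather than a single Hall step. Second, your ``stitching'' step is exactly the delicate part: in the paper's argument the connectivity of the second-colour matching is obtained not by amalgamating through $T$, but by first proving (Lemma~\ref{4}) that the uncovered part $\mathcal{B}\setminus S$ of the blue component sits inside a single red component, and then running K\"onig again on that red component. Your sketch does not yet contain an argument at that level of detail.
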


Combining Theorem \ref{DK} with {\L}uczak's method, they obtained the following result.

\begin{theorem}[DeBiasio and Krueger \cite{DBK2020}]\label{DKK}
For all real numbers $\gamma$, $\eta$ with $0\leq 32\sqrt[4]{\eta} < \gamma \leq \frac{1}{4}$, there exists $n_0$ such that if $G$ is a balanced bipartite graph on $2n\geq2n_0$ vertices with minimum degree  $\delta(G)\geq(\frac{3}{4}+\gamma)n$, then in every $2$-edge-coloring of G, either there exists a monochromatic cycle on at least $(1+\eta)n$ vertices, or there exist a monochromatic path on at least $2\lceil\frac{n}{2}\rceil$ vertices and a monochromatic cycle on at least $2\lfloor\frac{n}{2}\rfloor$ vertices.
\end{theorem}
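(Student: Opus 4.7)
The plan is to follow \L uczak's approach: reduce the problem to finding a monochromatic connected matching in an auxiliary reduced bipartite graph, apply Theorem~\ref{DK} there, and then lift the matching back to paths and cycles in $G$ via Szemer\'edi's regularity lemma.

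First I would apply the bipartite 2-colored regularity lemma to $G$ with parameters $\varepsilon\ll d\ll\eta$, producing equitable partitions $X=X_0\cup X_1\cup\cdots\cup X_m$ and $Y=Y_0\cup Y_1\cup\cdots\cup Y_m$ with $|X_0|,|Y_0|\le \varepsilon n$ and common cluster size $L=\lfloor (1-\varepsilon)n/m\rfloor$. I would then form the 2-colored reduced bipartite graph $R$ on the $2m$ clusters by placing a red (respectively blue) edge $X_iY_j$ whenever the red (respectively blue) subgraph of $G[X_i,Y_j]$ is $\varepsilon$-regular with density at least $d$. A routine cleaning argument, discarding the $O(\sqrt{\varepsilon}\,m)$ clusters with too many irregular or low-density partners, gives $\delta(R)\ge(\tfrac34+\tfrac{\gamma}{2})m$, since the slack $\gamma n$ in $\delta(G)$ absorbs the losses from cleaning and from $\varepsilon$-typicality.

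Next I would apply Theorem~\ref{DK} to $R$ (with the role of $n$ played by $m$): since $\delta(R)\ge\tfrac34 m$, there is a monochromatic connected matching $M$ with $|M|\ge m/2$ lying in a monochromatic component $C\subseteq R$; I may assume $M$ and $C$ are red. I would then split on the size of $C$. \emph{Case (A):} if $|V(C)|\ge (1+\sqrt{\eta})m$, so $C$ contains substantially more clusters than $M$ saturates, then a standard \L uczak-style walk along $C$ combined with Hamilton-path arguments inside each $\varepsilon$-regular pair produces a red cycle of $G$ using essentially every vertex of the clusters of $C$, hence a red cycle on at least $(1+\eta)n$ vertices. \emph{Case (B):} otherwise $|V(C)|$ is within $\sqrt{\eta}\,m$ of $m$, so $C$ is essentially spanned by $M$; the standard blow-up of $M$ yields a red cycle through almost all of the $\approx n$ matched vertices, and an absorption step using $\delta(G)\ge(\tfrac34+\gamma)n$ to cover the exceptional vertices in $X_0\cup Y_0$ and the cleaned-out clusters brings the cycle length up to at least $2\lfloor n/2\rfloor$. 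For the accompanying monochromatic path on $\ge 2\lceil n/2\rceil$ vertices, I would use the minimum-degree condition to find a vertex $v\notin V(C)$ with a large monochromatic neighbourhood meeting the cycle; opening the cycle inside a regular pair adjacent to $v$ and splicing $v$ in yields a monochromatic path strictly longer than the cycle, of length at least $2\lceil n/2\rceil$.

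The main obstacle will be Case (B). Theorem~\ref{DK} only produces $|M|\ge m/2$, and the naive blow-up falls $O(\varepsilon n)$ vertices short of the exact floor/ceiling bounds $2\lfloor n/2\rfloor$ and $2\lceil n/2\rceil$, so recovering these vertices requires an absorption argument that spends part of the $\gamma$-slack. The quantitative hypothesis $32\sqrt[4]{\eta}<\gamma$ is very likely determined by careful bookkeeping of these losses: a chunk of $\gamma$ is lost in the cleaning step to preserve the $3/4$-threshold in $R$, another chunk is lost during the exceptional-vertex absorption, and the remainder must still exceed the $\sqrt{\eta}$-threshold used to separate Cases~(A) and~(B). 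A further subtle point is that the path and the cycle in Case~(B) may well be in different colors, which is allowed by the conclusion but forces the extension argument to track both color classes simultaneously and commit to whichever color wins the neighbourhood count of the extending vertex~$v$.
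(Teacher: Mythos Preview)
The paper does not prove this statement: Theorem~\ref{DKK} is quoted from DeBiasio and Krueger~\cite{DBK2020} as a known result, introduced with the sentence ``Combining Theorem~\ref{DK} with {\L}uczak's method, they obtained the following result.'' There is therefore no proof in the paper to compare your proposal against.

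That said, your outline follows the strategy the paper attributes to~\cite{DBK2020} (regularity lemma, then Theorem~\ref{DK} on the reduced graph, then lift the connected matching to a cycle), so the high-level plan is right. But several steps of your sketch are genuinely incomplete rather than routine. In Case~(A) you assert that a large component $C$ lets you build a cycle through ``essentially every vertex of the clusters of $C$''; the connected matching $M$ only has $m/2$ edges, so the \L uczak walk naturally uses only the clusters saturated by $M$, not all of $C$, and you have not explained how the extra clusters of $C$ get incorporated. In Case~(B) the absorption of the exceptional set to reach the \emph{exact} value $2\lfloor n/2\rfloor$ is the heart of the matter and you have only named it; the paper you are reading avoids exact bounds entirely (its own Theorem~\ref{0} is asymptotic), so this step needs a real argument, not a placeholder. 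Finally, your path construction adds one vertex to the cycle, yielding a path on $2\lfloor n/2\rfloor+1$ vertices; when $n$ is odd this is still one short of $2\lceil n/2\rceil = 2\lfloor n/2\rfloor+2$.
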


Note that  previous studies on Schelp's question on cycles  (as Theorem \ref{NS}, Theorem \ref{BK}, Theorem \ref{DKK}) are basically on the diagonal case. The conclusions in Theorem \ref{NS} and Theorem \ref{BK} are pancyclic, but if we look at the longest cycle in Theorem \ref{BK} (for example), it basically says that $G\longrightarrow(C_{2t+2}, C_{2t+2})$ and the order of $G$ is $3t+r$ different from $r(C_{2t+2}, C_{2t+2})$ by at most 2.

For  bipartite Ramsey numbers of cycles, Yan and Peng \cite{Yan2021} recently showed that for $m,~n\geq5$,
\begin{equation}\label{brc}
br(C_{2m}, C_{2n})=\begin{cases}
m+n-1, & m\neq n, \\
m+n, & m=n.
\end{cases}
\end{equation}
If $\min\{m,~n\}\leq4$, (\ref{brc}) also holds by the results of Beineke and Schwenk \cite{BS1976}, Zhang and Sun \cite{ZS2011}, Zhang, Sun and Wu \cite{ZS2013}, and Gholami and Rowshan \cite {Gho2021}.

In this paper, we study the minimum degree version for off-diagonal case of cycles. For $m\neq n$, what is the tight bound on $\delta(G)$ such that $G\longmapsto(C_{2m}, C_{2n})$ for any balanced bipartite graph $G$ of order $2br(C_{2m}, C_{2n})=2(m+n-1)$? By the method introduced by {\L}uczak \cite{Luc1999} (see further development by Letzter\cite{letz}), if we can obtain bipartite Ramsey numbers for connected matchings, then  we can obtain   asymptotic bipartite Ramsey numbers for cycles.  In this paper, we prove that for $m\neq n$, if $G$ is a balanced bipartite graph of order $2br(CM_m, CM_n)=2(m+n-1)$ with $\delta(G)>\frac{3}{4}(m+n-1)$, then $G\longmapsto(CM_m, CM_n)$. Applying {\L}uczak's method, we obtain an asymptotic result of Schelp's question for  cycles and we state our main results below.
We first show the following theorem.

\begin{theorem}\label{1} Let $G[V_1, V_2]$ be a balanced bipartite graph on $2(m+n-1)$ vertices with minimum degree  $\delta(G)>\frac{3}{4}(m+n-1)$, where $m>n$. Then $G\longrightarrow(CM_m, CM_n)$.
\end{theorem}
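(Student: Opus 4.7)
I will argue by contradiction. Suppose $G$ admits a red-blue edge-coloring with no red $CM_m$ and no blue $CM_n$; write $R, B$ for the two monochromatic subgraphs. The hypothesis forces $\nu(C) \le m-1$ for every red component $C$ and $\nu(C') \le n-1$ for every blue component $C'$; since $R, B$ are bipartite, K\"onig's theorem furnishes componentwise vertex covers $W^R_C \subseteq V(C)$ with $|W^R_C| \le m-1$ and $W^B_{C'} \subseteq V(C')$ with $|W^B_{C'}| \le n-1$.

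The central structural observation is a degree dichotomy: for any $v \in V_j$ lying in red component $C$, if $v \notin W^R_C$ then every red edge at $v$ must terminate in $W^R_C \cap V_{3-j}$, so $d_R(v) \le |W^R_C| \le m-1$; a symmetric bound $d_B(v) \le n-1$ holds whenever $v$ avoids its blue cover. Call $v$ a \emph{double leaf} if $v \in V(D^R) \cap V(D^B) \setminus (W^R_{D^R} \cup W^B_{D^B})$, where $D^R$ and $D^B$ denote the largest red and blue components. If double leaves $v_1 \in V_1$ and $v_2 \in V_2$ exist, the dichotomy gives
\[
d(v_1) + d(v_2) \le |W^R_{D^R}| + |W^B_{D^B}| \le (m-1)+(n-1) = m+n-2,
\]
whereas the minimum-degree hypothesis forces $d(v_1) + d(v_2) > \tfrac{3}{2}(m+n-1)$, and since $\tfrac{3}{2}(m+n-1) > m+n-2$ this is a strict contradiction.

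The plan therefore reduces to producing double leaves on both sides. If $R$ and $B$ are both connected then $V(D^R) = V(G) = V(D^B)$, so
\[
\bigl| V_j \setminus (W^R_{D^R} \cup W^B_{D^B}) \bigr| \ge (m+n-1) - (m-1) - (n-1) = 1
\]
yields a double leaf on each side. The main obstacle is therefore what happens when a color class is disconnected. Suppose, say, $R$ has a second component $C_2$ besides $D^R$; then every $G$-edge between $V(D^R)$ and $V(C_2)$ is forced to be blue, and the density hypothesis $\delta(G) > \tfrac{3}{4}(m+n-1)$ supplies large blue degrees across this cut. A K\"onig-style argument on the blue subgraph between $V(D^R)$ and $V(C_2)$ should then yield either (i) a connected blue matching of size at least $n$ in this bipartite blue subgraph (contradicting the assumption on blue), or (ii) a severe size imbalance of $D^R$ or $C_2$ across $V_1, V_2$ that pushes $|W^R_{D^R}|$ or $|W^R_{C_2}|$ beyond $m-1$, again a contradiction. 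The symmetric case of disconnected $B$ is handled analogously, using that $m-1$ is replaced by the tighter bound $n-1$ from the blue König cover. This casework on disconnected color classes is the heart of the argument; once it is resolved, the double-leaf contradiction closes the theorem in a few lines.
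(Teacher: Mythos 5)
Your setup is sound and in fact coincides with the decisive counting step of the paper's own argument: componentwise K\"onig covers of size at most $m-1$ (red) and $n-1$ (blue), the observation that a vertex avoiding its cover sends all edges of that colour into the cover, and the bound
\[
d(v_1)+d(v_2)\;\le\;|W^R|+|W^B|\;\le\;(m-1)+(n-1)\;<\;2\delta(G)
\]
for ``double leaves'' $v_1\in V_1$, $v_2\in V_2$. The paper runs exactly this computation (in the first claim of the proof of Theorem \ref{1}, where $S\cup T$ plays the role of $W^R\cup W^B$) and concludes that double leaves cannot exist on both sides. Do note that the step requires $v_1$ and $v_2$ to lie in a \emph{common} red component and a \emph{common} blue component; otherwise the four cover pieces do not recombine into $|W^R|+|W^B|$ and the bound degrades to roughly $2(m+n-2)$, which is useless.

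The gap is everything after that, and it is the bulk of the theorem. The claim that a ``K\"onig-style argument on the blue subgraph between $V(D^R)$ and $V(C_2)$ should then yield either (i) a connected blue $n$-matching or (ii) a severe size imbalance'' is not proved, and the dichotomy is not correct as stated: in Construction \ref{a} (minimum degree exactly $\tfrac{3}{4}(m+n-1)$) the red graph has two components, each perfectly balanced with $\tfrac{m+n-1}{2}$ vertices per side and red cover of size $\tfrac{m+n-1}{2}\le m-1$, while the blue graph across the red cut splits into four complete bipartite pieces whose largest connected matching has size $\tfrac{m+n-1}{4}<n$; neither of your alternatives holds. So the argument cannot proceed from the structure of the cut alone --- it must exploit the strict inequality $\delta(G)>\tfrac34(m+n-1)$ globally and repeatedly, which is precisely what the paper does: Lemma \ref{2} (a three-case analysis producing a monochromatic component large on both sides), Lemma \ref{3}, Lemma \ref{4} (showing the large blue component minus its cover lies inside a single red component, so that double leaves at least share components), and then a further multi-level case analysis in the proof of Theorem \ref{1} for the branch in which double leaves exist on only one side. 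Your proposal correctly identifies the finishing move but leaves the heart of the proof --- the interaction of the red and blue component structures when one or both colours are disconnected --- entirely unestablished, and the specific route sketched for it would fail near the extremal configuration.
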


Note that $br(CM_m, CM_n)=m+n-1$ for $m\neq n$. Let $K_{N, N}$ have bipartition $X\uplus Y$, where $N=m+n-2$. Let $X_1\uplus X_2$ be a partition of $X$ with $|X_1|=m-1$ and $|X_2|=n-1$. Color all edges between $X_1$ and $Y$ in red, and all edges between $X_2$ and $Y$ in blue, then there is neither a red $CM_m$ nor a blue $CM_n$. Thus $br(CM_m, CM_n)\geq m+n-1$ for $m\neq n$. By equation (\ref{brc}), $br(CM_m, CM_n)\leq m+n-1$ for $m\neq n$.

The following construction shows that the minimum degree condition in Theorem \ref{1} is tight.

\begin{const}\label{a}
Let $X$ and $Y$ be disjoint sets with $m+n-1$ vertices, where $n<m<3n$. Partition $X$ into $\{X_i: i\in[4]\}$ and partition Y into $\{Y_i: i\in[4]\}$, such that $|X_i|=|Y_i|=\frac{m+n-1}{4}$ for each $i\in[4]$. For each $i\in[2]$, let $G[X_i, Y_i\uplus Y_3\uplus Y_4]$ be a complete bipartite graph. For each $i\in\{3, 4\}$, let $G[X_i, Y_1\uplus Y_2\uplus Y_i]$ be a complete bipartite graph. Color $G[X_i, Y_i]$ in blue for each $i\in[4]$, and color $G[X_1\uplus X_2, Y_3\uplus Y_4]$ and $G[X_3\uplus X_4, Y_1\uplus Y_2]$ in red. Then the red maximum connected matching has size $\frac{m+n-1}{2}<m$ since $n<m$; and the maximum blue connected matching has size $\frac{m+n-1}{4}<n$ since $m<3n$.
\end{const}

Combining Theorem \ref{1} and Szemer\'{e}di's Regularity Lemma, we obtain the following result for off-diagonal cycles. Note that if $\alpha_1>\alpha_2>0$ and $\alpha_1+\alpha_2=1$, then by equation (\ref{brc}), $br(C_{2\lfloor\alpha_1 N\rfloor}, C_{2\lfloor\alpha_2 N\rfloor})=N-1$.

\begin{theorem}\label{0} For every $\eta>0$, there exists a positive integer $N_0$ such that for every integer $N>N_0$ the following holds. Let $\alpha_1>\alpha_2>0$ such that $\alpha_1+\alpha_2=1$. Let $G[X, Y]$ be a balanced bipartite graph on $2(N-1)$ vertices with minimum degree $\delta(G)\geq(\frac{3}{4}+3\eta)(N-1)$. Then for each red-blue-edge-coloring of $G$, either there exist red even cycles of each length in $\{4, 6, 8, \ldots, (2-3\eta^2)\alpha_1N\}$, or there exist blue even cycles of each length in $\{4, 6, 8, \ldots, (2-3\eta^2)\alpha_2N\}$.
\end{theorem}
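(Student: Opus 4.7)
The plan is to use {\L}uczak's regularity-method framework with Theorem~\ref{1} as the matching-level Ramsey input. Apply the bipartite Szemer\'{e}di Regularity Lemma to $G[X,Y]$ with regularity parameter $\varepsilon$ and density threshold $d$ chosen so that $\varepsilon\ll d\ll\eta^2$. This yields equitable partitions $X=X_0\uplus X_1\uplus\cdots\uplus X_t$ and $Y=Y_0\uplus Y_1\uplus\cdots\uplus Y_t$, with exceptional sets $X_0,Y_0$ of size at most $\varepsilon(N-1)$ and every other cluster of common size $L$. Form a reduced graph $R$ on $2t$ vertices by joining $X_i$ to $Y_j$ whenever $(X_i,Y_j)$ is $\varepsilon$-regular with total density at least $d$, and color the edge according to the majority color in that pair (discarding the negligibly many pairs in which neither color reaches density $d/2$).

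A routine degree-loss computation (summing contributions from the exceptional sets, irregular pairs, and low-density pairs) shows that $R$ is a balanced bipartite graph on $2t$ vertices with $\delta(R)>(\tfrac{3}{4}+2\eta)t$. Pick integers $m^*>n^*$ with $m^*+n^*-1=t$, $m^*/t=\alpha_1-o(1)$, and $n^*/t=\alpha_2-o(1)$; then $\delta(R)>\tfrac{3}{4}(m^*+n^*-1)$, so Theorem~\ref{1} applied to $R$ produces either a red $CM_{m^*}$ or a blue $CM_{n^*}$. By symmetry assume the former, and write its matching edges as $e_1,\dots,e_{m^*}$, all lying in a single red component of $R$.

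Now lift this connected matching back to $G$. After trimming from each matched cluster a small set so that each matched pair $(X_{a_i},Y_{b_i})$ becomes $(\varepsilon',d/3)$-super-regular with $\varepsilon'\to 0$, standard properties of super-regular bipartite pairs allow us to realize, inside any single pair, a red path of any prescribed odd length up to $(2-o(1))L$ between prescribed endpoints on opposite sides. Combined with short red paths in $G$ linking consecutive matched pairs (obtained by lifting a reduced-graph path from $e_i$ to $e_{i+1}$ in the red component of $R$), this lets us realize any prescribed even length $\ell$ with $4\le\ell\le(2-3\eta^2)\alpha_1 N$: traverse the first few matched pairs fully, use the remaining length inside the last matched pair, and close up with the connecting paths. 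Since $2m^*L=(2-o(1))\alpha_1 N$, every target length is covered.

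The chief obstacle is this final step: realizing \emph{every} even length in the target range, rather than just one long cycle, while respecting the cleaned super-regular structure. Very short lengths (below roughly $2L$) must be handled entirely inside a single super-regular pair, intermediate lengths by concatenating a few matched pairs, and near-extremal lengths by using nearly all matched pairs with only a tiny residual adjustment; parity, endpoint feasibility, and residual-length arithmetic must be tracked at each extension. The quantitative loss constant $3\eta^2$ is sized to absorb the exceptional vertices from the regularity partition, the connecting-path overhead, and the slack required for these length adjustments.
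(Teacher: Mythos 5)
Your proposal is correct and follows essentially the same route as the paper: apply the $2$-colored bipartite regularity lemma, check that the reduced graph inherits minimum degree exceeding $\frac{3}{4}$ of its part size, invoke Theorem~\ref{1} to obtain a monochromatic connected matching of the appropriate size, and lift it via {\L}uczak's method (a closed walk through the matching plus variable-length paths inside the matched pairs) to monochromatic even cycles of every length in the target range. The only cosmetic difference is that the paper embeds the flexible-length paths directly in $\epsilon$-regular pairs using Lemma~\ref{path} rather than first trimming to super-regular pairs, which does not change the substance of the argument.
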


Throughout this paper, for a red-blue-edge-colored graph $G$, we use $G_R$ to denote the spanning subgraph induced by all red edges of $G$, and use $G_B$ to denote the spanning subgraph induced by all blue edges of $G$. For any $v\in V(G)$, let $N_{R}(v)=\{u\in V(G): uv\in E(G_R)\}$ and $N_{B}(v)=\{u\in V(G): uv\in E(G_B)\}$. 
We also give a stability result for connected matchings. Before starting the result, we introduce two special colorings.

\begin{defi}\label{spe-1}
Let $0<\gamma<\frac{1}{4}$, $n<m<3n$ and $G[X, Y]$ be a balanced bipartite graph on order $2(m+n-1)$ with minimum degree $\delta(G)>(\frac{3}{4}+\gamma)(m+n-1)$. We say that a red-blue-edge-coloring of $G$ is $\gamma$-missing if there exist a partition $\{X_1, X_2\}$ of $X$ and a partition $\{Y', Y_1, Y_2\}$ of $Y$ such that

{\rm(i)} $|X_1|>m-\gamma n-1$, $|Y_1|>m-\gamma n-1$ and $|Y_2|>n-\gamma m-1$;

{\rm (ii)} For any $x\in X_1$, $N_R(x)\subseteq Y'\uplus Y_1$ and $N_B(x)\subseteq Y'\uplus Y_2$;

{\rm (iii)} For any $x\in X_2$, $N_R(x)\subseteq Y_2$ and $N_B(x)\subseteq Y_1$.
\end{defi}

\begin{defi}\label{spe-2}
Let $0<\gamma<\frac{1}{4}$, $n<m<3n$ and $G[X, Y]$ be a balanced bipartite graph on order $2(m+n-1)$ with minimum degree $\delta(G)>(\frac{3}{4}+\gamma)(m+n-1)$. We say that a red-blue-edge-coloring of $G$ is a $\gamma$-coloring if there exist $X'\subseteq X$ and a partition $\{Y', Y_1, Y_2\}$ of $Y$ such that

{\rm (i)} $|X'|>\frac{3}{4}(m+n-1)-\gamma-1$, $|Y_1|>m-\gamma n-1$ and $|Y_2|>n-\gamma m-1$;

{\rm (ii)} For any $x\in X'$, $N_R(x)\subseteq Y'\uplus Y_1$ and $N_B(x)\subseteq Y'\uplus Y_2$.
\end{defi}

We show the following stability result.

\begin{theorem}\label{Sta}
For any $0<\gamma<\frac{1}{4}$, there exists an integer $n_0>0$ such that the following holds. Let $n_0\leq n<m<3n$ and $G[V_1, V_2]$ be a balanced bipartite graph on $2(m+n-1)$ vertices with minimum degree $\delta(G)>(\frac{3}{4}+\gamma)(m+n-1)$. For each red-blue-edge-coloring of $G$ which is not $\gamma$-missing,  there exists a red connected matching of size $(1+\gamma)m$ or a blue connected matching of size $(1+\gamma)n$; or the edge coloring is a $\gamma$-coloring.
\end{theorem}

The organization of this paper goes as follows. In section 2, we show the existence of large monochromatic components. Based on this result, we will prove Theorem \ref{1} and the stability result Theorem \ref{Sta} implying the existence of large monochromatic connected matchings in section $3$. In Section 4, we use Szemer\'{e}di's Regularity Lemma to expand monochromatic connected matchings into monochromatic cycles.

\section{Monochromatic components}

In this section, we show the existence of large monochromatic components. Throughout this paper, we say a graph $G=\emptyset$ if $E(G)=\emptyset$.

\begin{fact}\label{pr1}
Let $0\leq\gamma<\frac{1}{4}$ and let $G[V_1, V_2]$ be a balanced bipartite graph on $2N$ vertices with minimum degree $\delta(G)>(\frac{3}{4}+\gamma)N$. Suppose that $G[X_1, X_2]=\emptyset$ for $X_1\subset V_1$ and $X_2\subset V_2$. If $X_j\neq\emptyset$ for some $j\in[2]$, then $|X_{3-j}|<(\frac{1}{4}-\gamma)N$.
\end{fact}
\begin{proof}
Suppose that $X_j\neq\emptyset$ for some $j\in[2]$. Since $G[X_1, X_2]=\emptyset$, $X_{3-j}\subseteq V_{3-j}\backslash N_G(x)$ for any $x\in X_j$. So $|X_{3-j}|\leq|V_{3-j}\backslash N_G(x)|\leq|V_{3-j}|-\delta(G)<(\frac{1}{4}-\gamma)N$ since $\delta(G)>(\frac{3}{4}+\gamma)N$.
\end{proof}

\begin{fact}\label{pr2}
Let $0\leq\gamma<\frac{1}{4}$ and let $G[V_1, V_2]$ be a balanced bipartite graph on $2N$ vertices with minimum degree $\delta(G)>(\frac{3}{4}+\gamma)N$. Suppose that $X_1\subset V_1$ and $X_2\subset V_2$. The following holds.

\noindent {\rm(i)} If $G[X_1, X_2]\subseteq G_B$ and $|X_i|\geq(\frac{1}{2}-2\gamma)N$ for some $i\in[2]$, then $X_{3-i}$ is contained in a blue component of $G$.

\noindent {\rm(ii)} If $G[X_1, X_2]\subseteq G_R$ and $|X_i|\geq(\frac{1}{2}-2\gamma)N$ for some $i\in[2]$, then $X_{3-i}$ is contained in a red component of $G$.
\end{fact}
\begin{proof}
(i) Recall that $\delta(G)>(\frac{3}{4}+\gamma)N$. Since $G[X_1, X_2]\subseteq G_B$, for any $x\in X_{3-i}$, $|N_B(x)\cap X_i|\geq\delta(G)-|V_i\backslash X_i|>|X_i|-(\frac{1}{4}-\gamma)N$.
For any pair vertices $x$, $x'\in X_{3-i}$, by the inclusion-exclusion principle, we have that
$$\begin{aligned}
|N_B(x)\cap N_B(x')|&\geq|N_B(x)\cap X_i|+|N_B(x')\cap X_i|-|X_i| \\
&>2(|X_i|-(\frac{1}{4}-\gamma)N)-|X_i|=|X_i|-(\frac{1}{2}-2\gamma)N\geq0
\end{aligned}$$
since $|X_i|\geq(\frac{1}{2}-2\gamma)N$. Thus $X_{3-i}$ is contained in a blue component of $G$.

(ii) The proof is similar to (i).
\end{proof}

\begin{lemma}\label{2} For any $0\leq\gamma<\frac{1}{4}$, there exists an integer $n_0>0$ such that for any $m>n\geq n_0$ the following holds. Let $m<3n$ if $\gamma\neq0$. Let $G[V_1, V_2]$ be a balanced bipartite graph on $2(m+n-1)$ vertices with minimum degree $\delta(G)>(\frac{3}{4}+\gamma)(m+n-1)$. A red-blue-edge-coloring of $G$ yields either a red component with at least $(1+\gamma)m$ vertices in each of $V_1$ and $V_2$, or a blue component with at least $(1+\gamma)n$ vertices in each of $V_1$ and $V_2$ if and only if this edge coloring is not $\gamma$-missing.
\end{lemma}

\begin{proof} By Definition \ref{spe-1}, each $\gamma$-missing edge-coloring of $G$ yields neither a red component with at least $(1+\gamma)m$ vertices in each of $V_1$ and $V_2$; nor a blue component with at least $(1+\gamma)n$ vertices in each of $V_1$ and $V_2$.

When $\gamma=0$, we set $n_0=1$. When $\gamma>0$, we set $n_0=1+\frac{1}{\gamma}$.
Suppose that some red-blue-edge-coloring of $G$ yields neither a red component with at least $(1+\gamma)m$ vertices in each of $V_1$ and $V_2$; nor a blue component with at least $(1+\gamma)n$ vertices in each of $V_1$ and $V_2$. Then we show that this edge coloring is $\gamma$-missing.

Let $\mathcal{B}$ and $\mathcal{R}$ be a largest blue component and a largest red component of $G$ respectively. For each $i\in[2]$, let $B_i=\mathcal{B}\cap V_i$, $R_i=\mathcal{R}\cap V_i$, $BR_i=B_i\cap R_i$, and $V'_i=V_i\backslash(B_i\cup R_i)$.
By the hypothesis,
\begin{equation}\label{c0}
\min\{|B_1|, |B_2|\}<(1+\gamma)n,
\end{equation}
and
\begin{equation}\label{c1}
\min\{|R_1|, |R_2|\}<(1+\gamma)m.
\end{equation}
Let $N:=m+n-1=|V_1|=|V_2|$, now $\delta(G)>(\frac{3}{4}+\gamma)(m+n-1)=(\frac{3}{4}+\gamma)N$. When $\gamma>0$, we have that $3n>m>n\geq1+\frac{1}{\gamma}$, then
\begin{equation}\label{cn0}
(\frac{3}{4}+\gamma)N>\max\{m-\gamma n-1, (1+\gamma)m\}.
\end{equation}

\begin{claim}\label{a1} For each $i\in[2]$, the following holds.

{\rm (i)} If $x\in BR_i$, then $N_B(x)\subseteq B_{3-i}$ and $N_R(x)\subseteq R_{3-i}$.

{\rm (ii)} If $x\in B_i\backslash R_i$, then $N_B(x)\subseteq B_{3-i}$ and $N_R(x)\subseteq V'_{3-i}\uplus(B_{3-i}\backslash R_{3-i})$.

{\rm (iii)} If $x\in R_i\backslash B_i$, then $N_B(x)\subseteq V'_{3-i}\uplus(R_{3-i}\backslash B_{3-i})$ and $N_R(x)\subseteq R_{3-i}$.

{\rm (iv)} If $x\in V'_i$, then $N_B(x)\subseteq V'_{3-i}\uplus(R_{3-i}\backslash B_{3-i})$ and $N_R(x)\subseteq V'_{3-i}\uplus(B_{3-i}\backslash R_{3-i})$.

{\rm (v)} $G[BR_i, V'_{3-i}]=\emptyset$.

{\rm (vi)} $G[B_i\backslash R_i, R_{3-i}\backslash B_{3-i}]=\emptyset$.

{\rm (vii)} $B_i\neq\emptyset$ and $R_i\neq\emptyset$.
\end{claim}

\begin{proof} (i) Let $x\in BR_i=B_i\cap R_i$. Since $\mathcal{B}$ is the blue component containing  $B_i$, $N_B(x)\subseteq B_{3-i}$. Since $\mathcal{R}$ is the red component containing  $R_i$, $N_R(x)\subseteq R_{3-i}$.

(ii) Let $x\in B_i\backslash R_i$. Since $\mathcal{B}$ is the blue component containing  $B_i$, $N_B(x)\subseteq B_{3-i}$. If $N_R(x)\cap R_{3-i}\neq\emptyset$, then since $\mathcal{R}$ is a largest red component, $x\in R_i$, a contradiction. Thus $N_R(x)\subseteq V_{3-i}\backslash R_{3-i}=V'_{3-i}\uplus(B_{3-i}\backslash R_{3-i})$.

(iii) The proof is similar to (ii).

(iv) Let $x\in V'_i$. If $N_B(x)\cap B_{3-i}\neq\emptyset$, then since $\mathcal{B}$ is a largest blue component, $x\in B_i$, a contradiction. Thus $N_B(x)\subseteq V_{3-i}\backslash B_{3-i}=V'_{3-i}\uplus(R_{3-i}\backslash B_{3-i})$. If $N_R(x)\cap R_{3-i}\neq\emptyset$, then since $\mathcal{R}$ is a largest red component, $x\in R_i$, a contradiction. Thus $N_R(x)\subseteq V_{3-i}\backslash R_{3-i}=V'_{3-i}\uplus(B_{3-i}\backslash R_{3-i})$.

(v) If $BR_i=\emptyset$ or $V'_{3-i}=\emptyset$, then it is done. Suppose that $BR_i\neq\emptyset$ and $V'_{3-i}\neq\emptyset$. For any $x\in BR_i$, by (i), $N_G(x)\subseteq B_{3-i}\cup R_{3-i}$, so $N_G(x)\cap V'_{3-i}=\emptyset$. Thus $G[BR_i, V'_{3-i}]=\emptyset$.

(vi) If $B_i\backslash R_i=\emptyset$ or $R_{3-i}\backslash B_{3-i}=\emptyset$, then it is done. Suppose that $B_i\backslash R_i\neq\emptyset$ and $R_{3-i}\backslash B_{3-i}\neq\emptyset$. For any $x\in B_i\backslash R_i$, by (ii), $N_G(x)\in B_{3-i}\uplus V'_{3-i}$, so $N_G(x)\cap(R_{3-i}\backslash B_{3-i})=\emptyset$. Thus $G[B_i\backslash R_i$, $R_{3-i}\backslash B_{3-i}]=\emptyset$.

(vii) For any pair of vertices $x, x'\in V_1$, by the inclusion-exclusion principle, we have that $|N_G(x)\cap N_G(x')|\geq|N_G(x)|+|N_G(x')|-|V_2|\geq2\delta(G)-|V_2|>(\frac{1}{2}+2\gamma)N$ since $\delta(G)>(\frac{3}{4}+\gamma)N$. Thus $G$ is connected. If $B_i=\emptyset$ for some $i\in[2]$, then $G[V_1, V_2]$ is a red connected graph, contradicting to (\ref{c0}). Thus $B_i\neq\emptyset$ for each $i\in[2]$. Similarly, we have that $R_i\neq\emptyset$ for each $i\in[2]$.
\end{proof}

Next we split our argument into three cases.

{\bf Case 1.} $BR_1=BR_2=\emptyset$.

By Claim \ref{a1}(vi), we have that $G[B_1, R_2]=G[R_1, B_2]=\emptyset$. For each $i\in[2]$, by Fact \ref{pr1} and Claim \ref{a1}(vii), $|B_i|<(\frac{1}{4}-\gamma)N$ and $|R_i|<(\frac{1}{4}-\gamma)N$.

Now $|\mathcal{R}|=|R_1|+|R_2|<(\frac{1}{2}-2\gamma)N$ and  $|V'_2|=|V_2\backslash(B_2\cup R_2)|>(\frac{1}{2}+2\gamma)N$. By Claim \ref{a1}(ii), $G[B_1, V'_2]\subseteq G_R$. By Fact \ref{pr2}(ii), $B_1$ is contained in a red component of $G$, say $\mathcal{F}_1$. Let $x\in B_1=B_1\backslash R_1$, by Claim \ref{a1}(ii), $N_B(x)\subseteq B_2$ and $N_R(x)\subseteq B_2\uplus V'_2$ since $BR_2=\emptyset$, then $|\mathcal{F}_1\cap V_2|\geq|N_R(x)\cap V'_2|\geq\delta(G)-|B_2|>(\frac{1}{2}+2\gamma)N\geq
(\frac{1}{2}-2\gamma)N>|\mathcal{R}|$, a contradiction to the maximality of $\mathcal{R}$.

{\bf Case 2.} $BR_1\neq\emptyset$ and $BR_2\neq\emptyset$.

For each $i\in[2]$, by Claim \ref{a1}(i),
\begin{equation}\label{2c1}
|B_i\cup R_i|\geq\delta(G)>(\frac{3}{4}+\gamma)N,
\end{equation}
then
\begin{equation}\label{2c2}
|V'_i|=|V_i|-|B_i\cup R_i|<(\frac{1}{4}-\gamma)N.
\end{equation}

{\bf Subcase~2.1.} For some $i\in[2]$, $B_i=R_i=BR_i$.

Without loss of generality, assume that $B_1=R_1=BR_1$. Now $V_1=BR_1\uplus V'_1$. By inequality (\ref{2c1}), we have that
\begin{equation}\label{2c3}
|BR_1|=|B_1|=|R_1|>(\frac{3}{4}+\gamma)N.
\end{equation}
Then $|B_1|\overset{(\ref{2c3})}{>}(\frac{3}{4}+\gamma)N\geq(\frac{3}{2}+2\gamma)n$ since $m\geq n+1$. By inequality (\ref{c0}),
\begin{equation}\label{2c5}
|B_2|<(1+\gamma)n.
\end{equation}
If $V'_2\neq\emptyset$, then by Claim \ref{a1}(v) and Fact \ref{pr1}, $|BR_1|<(\frac{1}{4}-\gamma)N$, a contradiction to inequality (\ref{2c3}). Thus $V'_2=\emptyset$, now
\begin{equation}\label{ad1}
V_2=B_2\cup R_2.
\end{equation}

Suppose that $\gamma=0$. Now $|R_2\backslash B_2|\overset{(\ref{ad1})}{=}|V_2\backslash B_2|\overset{(\ref{2c5})}{\geq}m\geq\frac{N}{2}$ since $m\geq n+1$.
By Claim \ref{a1}(iv), $G[V'_1, R_2\backslash B_2]\subseteq G_B$. By Fact \ref{pr2}(i), $V'_1$ is contained in a blue component of $G$, say $\mathcal{F}_2$.
Since $|R_2\backslash B_2|\geq m$, $|BR_1|=|R_1|\leq m-1$ by inequality (\ref{c1}). Combining inequality (\ref{2c3}), we have that $m\geq3n+2$. Recall that $V_1=BR_1\uplus V'_1$, then
$|\mathcal{F}_2\cap V_1|\geq|V'_1|=|V_1\backslash BR_1|\geq n$.
Let $x\in V'_1$, by Claim \ref{a1}(iv) and (\ref{ad1}), $N_B(x)\subseteq R_2\backslash B_2$ and $N_R(x)\subseteq B_2\backslash R_2$, then
$|\mathcal{F}_2\cap V_2|\geq|N_B(x)\cap(R_2\backslash B_2)|\geq\delta(G)-|B_2|
\overset{(\ref{2c5})}{>}\frac{3m-n+1}{4}>2n$ since $m\geq 3n+2$.
Now $\mathcal{F}_2$ is a blue component such that $|\mathcal{F}_2\cap V_i|\geq n$ for each $i\in[2]$, a contradiction to the hypothesis.

Suppose that $\gamma\neq0$. Then $|BR_1|=|B_1|=|R_1|\overset{(\ref{2c3})}{>}(\frac{3}{4}+\gamma)N
\overset{(\ref{cn0})}{>}\max\{m-\gamma n-1, (1+\gamma)m\}>(1+\gamma)n$ since $m\geq n+1$. By inequality (\ref{c0}), $|B_2|<(1+\gamma)n$. By inequality (\ref{c1}), $|R_2|<(1+\gamma)m$. Then $|R_2\backslash B_2|\overset{(\ref{ad1})}{=}|V_2\backslash B_2|>m-\gamma n-1$ and $|B_2\backslash R_2|\overset{(\ref{ad1})}{=}|V_2\backslash R_2|>n-\gamma m-1$. Combining with Claim \ref{a1}(i) and (iv), the coloring is $\gamma$-missing as witnessed by the partitions $\{BR_1, V'_1\}$ of $V_1$ and $\{B_2\backslash R_2, BR_2, R_2\backslash B_2\}$ of $V_2$.

{\bf Subcase~2.2.} For each $i\in[2]$, $B_i\neq R_i$.

\begin{claim}\label{5}
There exists some $i\in[2]$ such that $B_i\backslash R_i=\emptyset$ and $R_i\backslash B_i=\emptyset$.
\end{claim}
\begin{proof} On the contrary, suppose that for each $i\in[2]$, either $B_i\varsubsetneq R_i$ or $R_i\varsubsetneq B_i$.

Suppose that for some $i\in[2]$, $B_i\varsubsetneq R_i$ and $R_{3-i}\varsubsetneq B_{3-i}$. Without loss of generality, assume that $B_1\varsubsetneq R_1$ and $R_2\varsubsetneq B_2$. Now $|R_1|=|B_1\cup R_1|\overset{(\ref{2c1})}{>}(\frac{3}{4}+\gamma)N$. Since $B_2\backslash R_2\neq\emptyset$, by Claim \ref{a1}(vi) and Fact \ref{pr1}, $|R_1\backslash B_1|<(\frac{1}{4}-\gamma)N$. Then $|B_1|=|R_1|-|R_1\backslash B_1|>(\frac{1}{2}+2\gamma)N\geq(1+4\gamma)n$ and
$|B_2|=|B_2\cup R_2|\overset{(\ref{2c1})}{>}(\frac{3}{4}+\gamma)N\geq(\frac{3}{2}+2\gamma)n$ since $m\geq n+1$, contradicting to $(\ref{c0})$.

Suppose that $R_i\varsubsetneq B_i$ for each $i\in[2]$, then each $|B_i|\overset{(\ref{2c1})}{>}(\frac{3}{4}+\gamma)N\geq(\frac{3}{2}+2\gamma)n$ since $m\geq n+1$, contradicting to (\ref{c0}).

Suppose that $B_i\varsubsetneq R_i$ for each $i\in[2]$, then each $|R_i|\overset{(\ref{2c1})}{>}(\frac{3}{4}+\gamma)N$. If $\gamma\neq0$, then $|R_i|>(\frac{3}{4}+\gamma)N\overset{(\ref{cn0})}{>}(1+\gamma)m$ for each $i\in[2]$, contradicting to (\ref{c1}). Next we consider the case when $\gamma=0$. By inequality (\ref{c1}), we can assume that $|R_1|\leq m-1$. Since $B_1\varsubsetneq R_1$, $|V'_1|=|V_1\backslash R_1|\geq n$. By Claim \ref{a1}(v) and Fact \ref{pr1}, $|B_2|<\frac{N}{4}$, then $|R_2\backslash B_2|\geq|R_2|-\frac{N}{4}\geq\frac{N}{2}$. By Claim \ref{a1}(iv), $G[V'_1, R_2\backslash B_2]\subseteq G_B$. By Fact \ref{pr2}(i), $V'_1$ is contained in a blue component of $G$, say $\mathcal{F}_3$. Now $|\mathcal{F}_3\cap V_1|\geq|V'_1|\geq n$. Let $x\in V'_1$, by Claim \ref{a1}(iv), $N_B(x)\subseteq V'_2\uplus(R_2\backslash B_2)$ and $N_R(x)\subseteq V'_2$ since $B_2\varsubsetneq R_2$, then
$|\mathcal{F}_3\cap V_2|\geq|N_B(x)\cap(R_2\backslash B_2)|\geq\delta(G)-|V'_2|\overset{(\ref{2c2})}{>}\frac{m+n-1}{2}\geq n$ since $m\geq n+1$. Now $\mathcal{F}_3$ is a blue component such that $|\mathcal{F}_3\cap V_i|\geq n$ for each $i\in[2]$, a contradiction to the hypothesis.

This completes the proof of Claim \ref{5}.
\end{proof}

By Claim \ref{5}, we can assume that $B_1\backslash R_1\neq\emptyset$ and $R_1\backslash B_1\neq\emptyset$. By Claim \ref{a1}(vi) and Fact \ref{pr1}, $|B_2\backslash R_2|<(\frac{1}{4}-\gamma)N$ and $|R_2\backslash B_2|<(\frac{1}{4}-\gamma)N$.

Now $|BR_2|=|B_2\cup R_2|-|R_2\backslash B_2|-|B_2\backslash R_2|\overset{(\ref{2c1})}{>}(\frac{1}{4}+3\gamma)N$.
Then $V'_1=\emptyset$, otherwise by Claim \ref{a1}(v) and Fact \ref{pr1}, $|BR_2|<(\frac{1}{4}-\gamma)N$, a contradiction. Thus
\begin{equation}\label{ad2}
V_1=B_1\cup R_1.
\end{equation}
Note that $|B_2|=|B_2\cup R_2|-|R_2\backslash B_2|\overset{(\ref{2c1})}{>}(\frac{1}{2}+2\gamma)N\geq(1+4\gamma)n$ since $m\geq n+1$.
By inequality (\ref{c0}), we have that
\begin{equation}\label{2c9}
|B_1|<(1+\gamma)n.
\end{equation}
Then $|R_1\backslash B_1|\overset{(\ref{ad2})}{=}|V_1\backslash B_1|>m-\gamma n-1>(\frac{1}{4}-\gamma)N$ since $m\geq n+1$. Now $B_2\backslash R_2=\emptyset$, otherwise by Claim \ref{a1}(vi) and Fact \ref{pr1}, $|R_1\backslash B_1|<(\frac{1}{4}-\gamma)N$, a contradiction. By the assumption $B_2\neq R_2$, we have that
\begin{equation}\label{ad3}
B_2\varsubsetneq R_2,
\end{equation}
and so
\begin{equation}\label{2c11}
|R_2|=|B_2\cup R_2|\overset{(\ref{2c1})}{>}(\frac{3}{4}+\gamma)N.
\end{equation}

Suppose that $\gamma\neq0$, now $3n>m>n\geq1+\frac{1}{\gamma}$. Then $|R_2|\overset{(\ref{2c11})}{>}(\frac{3}{4}+\gamma)N\overset{(\ref{cn0})}{>}(1+\gamma)m$.
By inequality (\ref{c1}), $|R_1|<(1+\gamma)m$, then $|B_1\backslash R_1|\overset{(\ref{ad2})}{=}|V_1\backslash R_1|>n-\gamma m-1>(\frac{1}{4}-\gamma)N$ since $3n>m>1+\frac{1}{\gamma}$. If $R_2\backslash B_2\neq\emptyset$, by Claim \ref{a1}(vi) and Fact \ref{pr1}, now $|B_1\backslash R_1|<(\frac{1}{4}-\gamma)N$, a contradiction. Thus $R_2\backslash B_2=\emptyset$, contradicting to (\ref{ad3}).

Suppose that $\gamma=0$. Now $|R_1\backslash B_1|\overset{(\ref{ad2})}{=}|V_1\backslash B_1|\overset{(\ref{2c9})}{\geq}m\geq\frac{N}{2}$ since $m\geq n+1$. By Claim \ref{a1}(iv), $G[R_1\backslash B_1, V'_2]\subseteq G_B$. By Fact \ref{pr2}(i), $V'_2$ is contained in a blue component of $G$, say $\mathcal{F}_4$. Since $|R_1\backslash B_1|\geq m$, by inequality (\ref{c1}), $|R_2|\leq m-1$. Combining with inequality (\ref{2c11}), we have that $m\geq 3n+2$. By (\ref{ad3}), $|\mathcal{F}_4\cap V_2|\geq|V'_2|=|V_2\backslash R_2|\geq n$. Let $x\in V'_2$, by Claim \ref{a1}(iv) and (\ref{ad2}), $N_B(x)\subseteq R_1\backslash B_1$ and $N_R(x)\subseteq B_1\backslash R_1$, then
$|\mathcal{F}_4\cap V_1|\geq|N_B(x)\cap(R_1\backslash B_1)|\geq\delta(G)-|B_1|\overset{(\ref{2c9})}{>}\frac{3m-n+1}{4}>2n$ since $m\geq3n+2$.
Now $\mathcal{F}_4$ is a blue component such that $|\mathcal{F}_4\cap V_i|\geq n$ for each $i\in[2]$, a contradiction to the hypothesis.

{\bf Case 3.} Exactly one of $BR_1$ and $BR_2$ is empty.

Without loss of generality, assume that $BR_1=\emptyset$ and $BR_2\neq\emptyset$. Now $V_1=B_1\uplus R_1\uplus V'_1$. For any $x\in BR_2$, by Claim \ref{a1}(i), $N_G(x)\subseteq B_1\uplus R_1$. Then
\begin{equation}\label{3c1}
|B_1|+|R_1|\geq\delta(G)>(\frac{3}{4}+\gamma)N,
\end{equation}
and so
\begin{equation}\label{3c2}
|V'_1|=|V_1\backslash(B_1\uplus R_1)|<(\frac{1}{4}-\gamma)N.
\end{equation}
By averaging principle, either $|B_1|>(\frac{3}{8}+\frac{\gamma}{2})N$ or $|R_1|>(\frac{3}{8}+\frac{\gamma}{2})N$. Suppose that $R_2\backslash B_2\neq\emptyset$ and $B_2\backslash R_2\neq\emptyset$. By Claim \ref{a1}(vi) and Fact \ref{pr1}, since $BR_1=\emptyset$, $|B_1|=|B_1\backslash R_1|<(\frac{1}{4}-\gamma)N$ and $|R_1|=|R_1\backslash B_1|<(\frac{1}{4}-\gamma)N$, a contradiction. Thus either $R_2\subseteq B_2$ or $B_2\subseteq R_2$.

Now we only need to consider the following possibilities.

{\bf Subcase~3.1.} $BR_1=\emptyset$ and $R_2\varsubsetneq B_2$.

Since $BR_1=\emptyset$, $G[R_1, B_2\backslash R_2]=\emptyset$ by Claim \ref{a1}(vi). Since $B_2\backslash R_2\neq\emptyset$, by Fact \ref{pr1},
\begin{equation}\label{3c21}
|R_1|<(\frac{1}{4}-\gamma)N.
\end{equation}
Then
\begin{equation}\label{3c22}
|B_1|\overset{(\ref{3c1})}{\geq}\delta(G)-|R_1|>(\frac{1}{2}+2\gamma)N.
\end{equation}
Now $|B_1|\overset{(\ref{3c22})}{>}(\frac{1}{2}+2\gamma)N\geq(1+4\gamma)n$ since $m\geq n+1$. By inequality (\ref{c0}), $|B_2|<(1+\gamma)n$. Since $R_2\varsubsetneq B_2$, $|V'_2|=|V_2\backslash B_2|>m-\gamma n-1\geq(\frac{1}{2}-2\gamma)N$ since $m\geq n+1$. Since $BR_1=\emptyset$, $G[B_1, V'_2]\subseteq G_R$ by Claim \ref{a1}(ii). By Fact \ref{pr2}(ii), $B_1$ is contained in a red component of $G$, say $\mathcal{H}_1$.

For any $x\in V'_2$, by Claim \ref{a1}(iv), $N_B(x)\subseteq R_1\uplus V'_1$ and $N_R(x)\subseteq B_1\uplus V'_1$ since $BR_1=\emptyset$, then $|N_R(x)\cap B_1|\geq\delta(G)-|R_1\uplus V'_1|
\overset{(\ref{3c2})}{\underset{(\ref{3c21})}{>}}(\frac{1}{4}+3\gamma)N$.
Since $B_1\subseteq\mathcal{H}_1\cap V_1$, $V'_2\subseteq\mathcal{H}_1\cap V_2$. Recall that  $R_2\varsubsetneq B_2$ and $|B_2|<(1+\gamma)n$. Then $|\mathcal{R}|=|R_1|+|R_2|\overset{(\ref{3c21})}{<}(\frac{1}{4}-\gamma)N+|B_2|
<(\frac{1}{4}-\gamma)(m-1)+\frac{5n}{4}$ and
$|\mathcal{H}_1|\geq|B_1|+|V'_2|\overset{(\ref{3c22})}{>}(\frac{1}{2}+2\gamma)N+|V'_2|
=(\frac{1}{2}+2\gamma)N+|V_2\backslash B_2|>(\frac{3}{2}+2\gamma)(m-1)+(\frac{1}{2}+\gamma)n$.
Since $m\geq n+1$, $|\mathcal{H}_1|>(\frac{3}{2}+2\gamma)(m-1)+(\frac{1}{2}+\gamma)n
>(\frac{1}{4}-\gamma)(m-1)+\frac{5}{4}n>|\mathcal{R}|$, contradicting to the maximality of $\mathcal{R}$.

{\bf Subcase~3.2.} $BR_1=\emptyset$ and $B_2\varsubsetneq R_2$.

Since $BR_1=\emptyset$, $G[B_1, R_2\backslash B_2]=\emptyset$ by Claim \ref{a1}(vi). Since $R_2\backslash B_2\neq\emptyset$, by Claim \ref{a1}(vii) and Fact \ref{pr1}, we have that
\begin{equation}\label{3c31}
|R_2\backslash B_2|<(\frac{1}{4}-\gamma)N,
\end{equation}
and
\begin{equation}\label{3c32}
|B_1|<(\frac{1}{4}-\gamma)N.
\end{equation}
Then
\begin{equation}\label{3c33}
|R_1|\overset{(\ref{3c1})}{\geq}\delta(G)-|B_1|>(\frac{1}{2}+2\gamma)N.
\end{equation}
Recall that $BR_1=\emptyset$. For any $x\in R_1$, by Claim \ref{a1}(iii), $N_B(x)\subseteq V'_2\uplus(R_2\backslash B_2)$ and $N_R(x)\subseteq R_2$, then
\begin{equation}\label{3c68}
|N_B(x)\cap V'_2|\geq\delta(G)-|R_2|.
\end{equation}

\begin{claim}\label{n03}
The following holds.

{\rm(i)} $V'_2\neq\emptyset$. Besides, $V'_2$ is contained in some blue component of $G$, say $\mathcal{H}_2$.

{\rm(ii)} $V'_1=\emptyset$. Furthermore, $V_1=B_1\uplus R_1$.

{\rm(iii)} $|V'_2|<(1+\gamma)n$.
\end{claim}
\begin{proof}(i) Suppose that $V'_2=\emptyset$. Recall that $B_2\varsubsetneq R_2$, then $R_2=V_2$ and
$|BR_2|=|B_2|=|R_2|-|R_2\backslash B_2|=|V_2|-|R_2\backslash B_2|\overset{(\ref{3c31})}{>}(\frac{3}{4}+\gamma)N$.
We have that  $V'_1=\emptyset$, otherwise by Claim \ref{a1}(v) and Fact \ref{pr1}, $|BR_2|<(\frac{1}{4}-\gamma)N$, a contradiction.
Note that $|B_2|>(\frac{3}{4}+\gamma)N\geq(\frac{3}{2}+2\gamma)n$ since $m\geq n+1$. By inequality (\ref{c0}), $|B_1|<(1+\gamma)n$. Since $R_2=V_2$, by inequality (\ref{c1}), $|R_1|<(1+\gamma)m$. If $\gamma=0$, now $|B_1|+|R_1|\leq m+n-2$, then $|V'_1|=|V_1\backslash(B_1\cup R_1)|\geq1$, a contradiction. If $\gamma\neq0$,  then $|V'_1|=|V_1\backslash(B_1\cup R_1)|\overset{(\ref{3c32})}{>}(\frac{3}{4}+\gamma)N-|R_1|
>(\frac{3}{4}+\gamma)N-(1+\gamma)m\overset{(\ref{cn0})}{>}0$, a contradiction. Thus $V'_2\neq\emptyset$.

Since $BR_1=\emptyset$,  by Claim \ref{a1}(iii), $G[R_1, V'_2]\subseteq G_B$. Combining Fact \ref{pr2}(i) and inequality (\ref{3c33}), $V'_2$ is contained in a blue component of $G$, say $\mathcal{H}_2$.

(ii) Suppose that $V'_1\neq\emptyset$. Recall that $B_2\varsubsetneq R_2$. By Claim \ref{a1}(v) and Fact \ref{pr1}, $|B_2|=|BR_2|<(\frac{1}{4}-\gamma)N$, then
$|R_2|=|B_2|+|R_2\backslash B_2|\overset{(\ref{3c31})}{<}(\frac{1}{2}-2\gamma)N$.
Let $x\in R_1$, $|N_B(x)\cap V'_2|\overset{(\ref{3c68})}{\geq}\delta(G)-|R_2|>(\frac{1}{4}+3\gamma)N$. By (i), $R_1\subseteq\mathcal{H}_2\cap V_1$. Since $m\geq n+1$, $|\mathcal{H}_2\cap V_2|\geq|V'_2|=|V_2\backslash R_2|>(\frac{1}{2}+2\gamma)N\geq(1+4\gamma)n$ and $|\mathcal{H}_2\cap V_1|\geq|R_1|\overset{(\ref{3c33})}{>}(\frac{1}{2}+2\gamma)N\geq(1+4\gamma)n$, a contradiction to the hypothesis. Thus $V'_1=\emptyset$. Since $BR_1=\emptyset$, $V_1=B_1\uplus R_1$.

(iii) By (i), $V'_2\subseteq\mathcal{H}_2\cap V_2$.
Let $x\in V'_2$, by (ii) and Claim \ref{a1}(iv), $N_B(x)\subseteq R_1$ and $N_R(x)\subseteq B_1$, then $|\mathcal{H}_2\cap V_1|\geq|N_B(x)\cap R_1|
\geq\delta(G)-|B_1|\overset{(\ref{3c32})}{>}(\frac{1}{2}+2\gamma)N\geq(1+4\gamma)n$
since $m\geq n+1$. If $|V'_2|\geq(1+\gamma)n$, then $\mathcal{H}_2$ is a blue component such that $|\mathcal{H}_2\cap V_i|\geq(1+\gamma)n$ for each $i\in[2]$, a contradiction to the hypothesis. Thus $|V'_2|<(1+\gamma)n$.
\end{proof}

Suppose that $\gamma=0$. By Claim \ref{n03}(iii), $|V'_2|\leq n-1$. Since $B_2\varsubsetneq R_2$, $|R_2|=|V_2\backslash V'_2|\geq m$. By inequality (\ref{c1}), $|R_1|\leq m-1$. By Claim \ref{n03}(ii), $|B_1|=|V_1\backslash R_1|\geq n$. Combining with inequality (\ref{3c32}), we have that $m\geq 3n+2$.
By inequality (\ref{c0}), $|B_2|\leq n-1$. Since $B_2\varsubsetneq R_2$, $|R_2\backslash B_2|=|V_2\backslash(B_2\uplus V'_2)|>|V_2|-2(n-1)=m-n+1>\frac{m+n}{2}$ since $m\geq 3n+2$, a contradiction to inequality (\ref{3c31}).

Suppose that $\gamma\neq0$, now $3n>m>n\geq1+\frac{1}{\gamma}$. By Claim \ref{n03}(ii),
$|R_1|=|V_1\backslash B_1|\overset{(\ref{3c32})}{>}(\frac{3}{4}+\gamma)N\overset{(\ref{cn0})}{>}(1+\gamma)m$. By inequality (\ref{c1}), $|R_2|<(1+\gamma)m$. For any $x\in R_1$, $|N_B(x)\cap V'_2|\overset{(\ref{3c68})}{\geq}\delta(G)-|R_2|
>(\frac{3}{4}+\gamma)N-(1+\gamma)m\overset{(\ref{cn0})}{>}0$. By Claim \ref{n03}(i), $R_1\subseteq\mathcal{H}_2\cap V_1$. Since $B_2\varsubsetneq R_2$ and $|R_2|<(1+\gamma)m$,
$|\mathcal{H}_2|\geq|R_1|+|V'_2|>(\frac{3}{4}+\gamma)N+|V'_2|=(\frac{3}{4}+\gamma)N+|V_2\backslash R_2|
>(\frac{3}{4}+2\gamma)m+(\frac{7}{4}+\gamma)(n-1)$ and $|\mathcal{B}|=|B_1|+|B_2|\overset{(\ref{3c32})}{<}(\frac{1}{4}-\gamma)N+|R_2|
<\frac{5m}{4}+(\frac{1}{4}-\gamma)n$.
Since $3n>m>1+\frac{1}{\gamma}$, $|\mathcal{H}_2|>(\frac{3}{4}+2\gamma)m+(\frac{7}{4}+\gamma)(n-1)
>\frac{5m}{4}+(\frac{1}{4}-\gamma)n>|\mathcal{B}|$, a contradiction to the maximality of $\mathcal{B}$.

{\bf Subcase~3.3.} $BR_1=\emptyset$ and $B_2=R_2=BR_2$.

For any $x\in B_1$, by Claim \ref{a1}(ii), $N_B(x)\subseteq B_2=BR_2$ and $N_R(x)\subseteq V'_2$, then
\begin{equation}\label{3c7}
|N_R(x)\cap V'_2|\geq\delta(G)-|BR_2|.
\end{equation}
For any $x\in R_1$, by Claim \ref{a1}(iii), $N_R(x)\subseteq R_2=BR_2$ and $N_B(x)\subseteq V'_2$, then
\begin{equation}\label{3c8}
|N_B(x)\cap V'_2|\geq\delta(G)-|BR_2|.
\end{equation}
For any $x\in V'_2$, by Claim \ref{a1}(iv), $N_B(x)\subseteq R_1\uplus V'_1$ and $N_R(x)\subseteq B_1\uplus V'_1$, then
\begin{equation}\label{3c10}
|N_B(x)\cap R_1|\geq\delta(G)-|B_1\uplus V'_1|
\end{equation}
and
\begin{equation}\label{3c9}
|N_R(x)\cap B_1|\geq\delta(G)-|R_1\uplus V'_1|
\end{equation}

\begin{claim}\label{n01}
The following holds.

{\rm(i)} $|B_2|=|R_2|=|BR_2|\geq\frac{N}{2}$.

{\rm(ii)} $V'_1=\emptyset$. Furthermore, $V_1=B_1\uplus R_1$.

{\rm(iii)} $|B_1|<(1+\gamma)n$.

{\rm(iv)} $|R_1|<(1+\gamma)m$.
\end{claim}

\begin{proof} (i) Suppose that
\begin{equation}\label{3ca}
|B_2|=|R_2|=|BR_2|<\frac{N}{2}<|V_2\backslash BR_2|=|V'_2|.
\end{equation}
Since $BR_1=\emptyset$, by Claim \ref{a1}(iv), $G[B_1, V'_2]\subseteq G_R$ and $G[R_1, V'_2]\subseteq G_B$. Combining Fact \ref{pr2} and inequality (\ref{3ca}), $B_1$ is contained in a red component of $G$, say $\mathcal{H}_3$, and $R_1$ is contained in a blue component of $G$, say $\mathcal{H}_4$.

We claim that $V'_1=\emptyset$. On the contrary, suppose that $V'_1\neq\emptyset$. By Claim \ref{a1}(v) and Fact \ref{pr1}, we have that
\begin{equation}\label{nn1}
|B_2|=|R_2|=|BR_2|<(\frac{1}{4}-\gamma)N.
\end{equation}
Recall that $B_1\subseteq\mathcal{H}_3\cap V_1$. Let $x\in B_1$, then
$|\mathcal{H}_3\cap V_2|\geq|N_R(x)\cap V'_2|
\overset{(\ref{3c7})}{\geq}\delta(G)-|B_2|
\overset{(\ref{nn1})}{>}(\frac{1}{2}+2\gamma)N\overset{(\ref{3ca})}{\geq}|R_2|$.
Since $\mathcal{R}$ is a largest red component, $|\mathcal{H}_3|\leq|\mathcal{R}|$. Then
$|B_1|\leq|\mathcal{H}_3\cap V_1|=|\mathcal{H}_3|-|\mathcal{H}_3\cap V_2|<|\mathcal{R}|-|R_2|=|R_1|$.
Recall that $R_1\subseteq\mathcal{H}_4\cap V_1$. Let $x\in R_1$, then
$|\mathcal{H}_4\cap V_2|\geq|N_B(x)\cap V'_2|\overset{(\ref{3c8})}{\geq}\delta(G)-|R_2|
\overset{(\ref{nn1})}{>}(\frac{1}{2}+2\gamma)N\overset{(\ref{3ca})}{\geq}|B_2|$.
Thus $|\mathcal{H}_4|=|\mathcal{H}_4\cap V_1|+|\mathcal{H}_4\cap V_2|>|R_1|+|B_2|>|B_1|+|B_2|=|\mathcal{B}|$, a contradiction to the maximality of $\mathcal{B}$.
By the assumption $BR_1=\emptyset$, now $V_1=B_1\uplus R_1$.

Suppose that $|R_1|\leq\frac{N}{2}$, then $|B_1|=|V_1\backslash R_1|\geq\frac{N}{2}\geq|R_1|$.
For any $x\in V'_2$,
$|N_R(x)\cap B_1|\overset{(\ref{3c9})}{>}(\frac{3}{4}+\gamma)N-|R_1|\geq(\frac{1}{4}+\gamma)N$. Recall that $B_1\subseteq\mathcal{H}_3\cap V_1$, then $V'_2\subseteq\mathcal{H}_3\cap V_2$. Thus
$|\mathcal{H}_3|\geq|B_1|+|V'_2|\geq|R_1|+|V'_2|\overset{(\ref{3ca})}{>}|R_1|+|R_2|=|\mathcal{R}|$, contradicting to the maximality of $\mathcal{R}$.

Next we assume that $|R_1|>\frac{N}{2}$, then $|B_1|=|V_1\backslash R_1|<\frac{N}{2}<|R_1|$.
For any $x\in V'_2$,
$|N_B(x)\cap R_1|\overset{(\ref{3c10})}{>}(\frac{3}{4}+\gamma)N-|B_1|>(\frac{1}{4}+\gamma)N$.
Recall that $R_1\subseteq\mathcal{H}_4\cap V_1$, then $V'_2\subseteq\mathcal{H}_4\cap V_2$. Now $|\mathcal{H}_4|\geq|R_1|+|V'_2|>|B_1|+|V'_2|\overset{(\ref{3ca})}{>}|B_1|+|B_2|=|\mathcal{B}|$, contradicting to the maximality of $\mathcal{B}$.

(ii) If $V'_1\neq\emptyset$, then by Claim \ref{a1}(v) and Fact \ref{pr1},  $|BR_2|<(\frac{1}{4}-\gamma)N$, contradicting to (i). Thus $V'_1=\emptyset$. Recall that $BR_1=\emptyset$, then $V_1=B_1\uplus R_1$.

(iii) Suppose that $|B_1|\geq(1+\gamma)n$. By inequality (\ref{c0}),
\begin{equation}\label{3c3}
|BR_2|=|R_2|=|B_2|<(1+\gamma)n\leq|B_1|.
\end{equation}
Combining with (i), we have that $m-1<(1+2\gamma)n$, then $|B_1|\overset{(\ref{3c3})}{\geq}(1+\gamma)n>(\frac{1}{2}-2\gamma)N$.
Since $BR_1=\emptyset$, by Claim \ref{a1}(iv), $G[B_1, V'_2]\subseteq G_R$. By Fact \ref{pr2}(ii), $V'_2$ is contained in a red component of $G$, say $\mathcal{H}_5$.

For any $x\in B_1$, $|N_R(x)\cap V'_2|\overset{(\ref{3c7})}{\geq}\delta(G)-|BR_2|
\overset{(\ref{3c3})}{>}(\frac{3}{4}+\gamma)N-(1+\gamma)n\geq(\frac{1}{2}+\gamma)n$
since $m\geq n+1$. Since $V'_2\subseteq \mathcal{H}_5\cap V_2$, $B_1\subseteq \mathcal{H}_5\cap V_1$. By (ii), $|R_1|=|V_1\backslash B_1|\overset{(\ref{3c3})}{<}|V_2\backslash BR_2|=|V'_2|$. Then $|\mathcal{H}_5|\geq|B_1|+|V'_2|\overset{(\ref{3c3})}{>}|R_2|+|V'_2|>|R_2|+|R_1|=|\mathcal{R}|$, a contradiction to the maximality of $\mathcal{R}$. Thus $|B_1|<(1+\gamma)n$.

(iv) Suppose that $|R_1|\geq(1+\gamma)m$. By inequality (\ref{c1}),
\begin{equation}\label{3ca7}
|BR_2|=|B_2|=|R_2|<(1+\gamma)m\leq|R_1|.
\end{equation}
Now $|R_1|\overset{(\ref{3ca7})}{\geq}(1+\gamma)m>(\frac{1}{2}-2\gamma)N$ since $m\geq n+1$. Since $BR_1=\emptyset$, by Claim \ref{a1}(iv), $G[R_1, V'_2]\subseteq G_B$. By Fact \ref{pr2}(i), $V'_2$ is contained in a blue component of $G$, say $\mathcal{H}_6$.

Suppose that $\gamma\neq0$. For any $x\in R_1$, $|N_B(x)\cap V'_2|\overset{(\ref{3c8})}{\geq}\delta(G)-|BR_2|
\overset{(\ref{3ca7})}{>}(\frac{3}{4}+\gamma)N-(1+\gamma)m\overset{(\ref{cn0})}{>}0$.
Since $V'_2\subseteq \mathcal{H}_6\cap V_2$, $R_1\subseteq \mathcal{H}_6\cap V_1$.
By (ii), $|B_1|=|V_1\backslash R_1|\overset{(\ref{3ca7})}{<}|V_2\backslash BR_2|=|V'_2|$. Then $|\mathcal{H}_6|\geq|R_1|+|V'_2|\overset{(\ref{3ca7})}{>}|B_2|+|V'_2|>|B_2|+|B_1|=|\mathcal{B}|$, a contradiction to the maximality of $\mathcal{B}$.

Suppose that $\gamma=0$. Now $|\mathcal{H}_6\cap V_2|\geq|V'_2|=|V_2\backslash BR_2|\overset{(\ref{3ca7})}{\geq}n$.
Suppose that $m>3n$. Let $x\in V'_2$, by (ii), then $|\mathcal{H}_6\cap V_1|\geq|N_B(x)\cap R_1|
\overset{(\ref{3c10})}{\geq}\delta(G)-|B_2|\overset{(\ref{3ca7})}{>}\frac{3m-n+1}{4}>2n$.
Now $\mathcal{H}_6$ is a blue component such that $|\mathcal{H}_6\cap V_i|\geq n$ for each $i\in[2]$, a contradiction. Thus we have that $m\leq3n$. For any $x\in R_1$,
$|N_B(x)\cap V'_2|\overset{(\ref{3c8})}{\geq}\delta(G)-|BR_2|\overset{(\ref{3ca7})}{>}\frac{3n-m+1}{4}>0$.
Since $V'_2\subseteq \mathcal{H}_6\cap V_2$, $R_1\subseteq \mathcal{H}_6\cap V_1$. Then $\mathcal{H}_6$ is a blue component such that $|\mathcal{H}_6\cap V_1|\geq|R_1|\overset{(\ref{3ca7})}{\geq}m\geq n+1$ and $|\mathcal{H}_6\cap V_2|\geq|V'_2|\geq n$, a contradiction. Thus $|R_1|<(1+\gamma)m$.

This completes the proof of Claim \ref{n01}.
\end{proof}

If $\gamma=0$, by Claim \ref{n01}(iii)-(iv), we have that $|B_1|\leq n-1$ and $|R_1|\leq m-1$, then by Claim \ref{n01}(ii), $|V_1|=|B_1|+|R_1|\leq m+n-2$, a contradiction.

Next let $\gamma\neq0$. Recall that $BR_1=\emptyset$ and $B_2=R_2$, then except $\mathcal{R}$, all other red components of $G$ are between $B_1$ and $V'_2$. Combining inequality (\ref{c1}) and Claim \ref{n01}(iii), there is no red component of $G$ with at least $(1+\gamma)m$ vertices in $V_1$ and at least $(1+\gamma)m$ vertices in $V_2$.

By Claim \ref{n01}(ii)-(iii), $|R_1|=|V_1\backslash B_1|>m-\gamma n-1>(\frac{1}{2}-2\gamma)N$ since $m\geq n+1$. Since $BR_1=\emptyset$, by Claim \ref{a1}(iv), $G[R_1, V'_2]\subseteq G_B$. By Fact \ref{pr2}(i), $V'_2$ is contained in a blue component of $G$, say $\mathcal{C}$. For each $i\in[2]$, let $C_i=\mathcal{C}\cap V_i$. By Claim \ref{a1}(iv) and Claim \ref{n01}(ii), $C_1\subseteq R_1$. Since $B_2=R_2$, $C_2=V'_2$, thus $V_2=BR_2\uplus C_2$. By Claim \ref{n01}(ii)-(iv), $|B_1|=|V_1\backslash R_1|>n-\gamma m-1$ and $|R_1|=|V_1\backslash B_1|>m-\gamma n-1$.

Suppose that $C_1=R_1$. Suppose that $|V'_2|\geq(1+\gamma)n$, then $|R_1|=|C_1|<(1+\gamma)n$ by the hypothesis. By Claim \ref{n01}(ii)-(iii), $N=|V_1|=|B_1\uplus R_1|<2(1+\gamma)n$, then $m-1<(1+2\gamma)n$. By Claim \ref{n01}(i), $(1+\gamma)n\leq|V'_2|=|V_2\backslash BR_2|\leq\frac{N}{2}$, and so $m-1\geq(1+2\gamma)n$, a contradiction. Thus $|V'_2|<(1+\gamma)n$. Since $B_2=R_2=BR_2$, $|BR_2|=|V_2\backslash V'_2|>m-\gamma n-1$. Combining Claim \ref{a1}(ii)-(iv) and Claim \ref{n01}(ii), the coloring is $\gamma$-missing as witnessed by the partitions $\{B_1, R_1\}$ of $V_1$ and $\{BR_2, V'_2\}$ of $V_2$.

Suppose that $C_1\varsubsetneq R_1$. Recall that $BR_2=B_2=R_2$. Let $x\in R_1\backslash C_1$. Since $\mathcal{R}$ is a largest red component, $N_R(x)\subseteq R_2$. By Claim \ref{n01}(ii), $x\in V_1\backslash(B_1\uplus C_1)$, then $N_B(x)=\emptyset$ since $V_2=B_2\uplus C_2$. Then $|BR_2|=|R_2|\geq\delta(G)>(\frac{3}{4}+\gamma)N\overset{(\ref{cn0})}{>}m-\gamma n-1$. Combining Claim \ref{a1}(ii)-(iv) and Claim \ref{n01}(ii), the coloring is $\gamma$-missing as witnessed by the partitions $\{B_1, R_1\}$ of $V_1$ and $\{BR_2, V'_2\}$ of $V_2$.
\end{proof}

\section{Monochromatic connected matchings}

In this section, we will prove Theorem \ref{1} and Theorem \ref{Sta}. Let $\alpha'(G)$ denote the number of edges in a maximum matching of $G$. A {\em vertex cover} of a graph $G$ is a set $Q\subseteq V(G)$ which contains at least one endpoint of each edge of $G$. We will apply the K\"{o}nig-Egerv\'{a}ry Theorem in the proofs of Theorem \ref{1} and Theorem \ref{Sta}.

\begin{theorem}[The K\"{o}nig-Egerv\'{a}ry Theorem]\label{EK}
In any bipartite graph, the number of edges in a maximum matching is equal to the number of vertices in a minimum vertex cover.
\end{theorem}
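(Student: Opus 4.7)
The plan is to prove both directions of the equality $\alpha'(G) = \tau(G)$, where $\tau(G)$ denotes the minimum vertex cover number. The easy direction $\alpha'(G) \leq \tau(G)$ is immediate: fix a maximum matching $M$ and any vertex cover $Q$; since the edges of $M$ are pairwise vertex-disjoint and each must have at least one endpoint in $Q$, distinct edges of $M$ require distinct vertices of $Q$, giving $|M| \leq |Q|$. Taking $Q$ to be a minimum vertex cover yields the claim.

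For the nontrivial direction I would construct, from a maximum matching $M$, a vertex cover of size exactly $|M|$ via the standard $M$-alternating path argument. Write the bipartition as $V(G) = A \uplus B$ and let $U \subseteq A$ denote the set of $A$-vertices unsaturated by $M$. Define $Z$ to be the set of all vertices reachable from $U$ by $M$-alternating paths, that is, paths alternating between non-matching and matching edges and beginning with a non-matching edge (the length-$0$ paths guarantee $U \subseteq Z$). Set $Q = (A \setminus Z) \cup (B \cap Z)$. The proof then splits into two claims: $(a)$ $Q$ is a vertex cover, and $(b)$ $|Q| = |M|$. Claim $(a)$ is a direct case check: given an edge $ab$ with $a \in A$, $b \in B$, if $a \notin Z$ then $a \in Q$; if $a \in Z$ then the alternating path reaching $a$ can be extended by the non-matching edge $ab$ (unless $ab \in M$, in which case $b$ is already reached by this same edge), placing $b \in Z$ and hence $b \in Q$.

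For claim $(b)$, the key structural observations are: every vertex of $B \cap Z$ is $M$-saturated, for otherwise an alternating path from $U$ to it would be $M$-augmenting, contradicting the maximality of $M$; moreover its $M$-mate lies in $A \cap Z$, obtained by extending the alternating path one further matching step. Conversely, every vertex of $A \setminus Z$ is $M$-saturated (since all unsaturated $A$-vertices lie in $U \subseteq Z$), and its $M$-mate must lie in $B \setminus Z$, for otherwise propagating reachability backwards along this matching edge would force the vertex into $A \cap Z$. These correspondences show that $M$ induces a bijection between $B \cap Z$ and its mates in $A \cap Z$, and a bijection between $A \setminus Z$ and its mates in $B \setminus Z$; together these account for every edge of $M$, giving $|M| = |A \setminus Z| + |B \cap Z| = |Q|$. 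The main subtlety is the bookkeeping around how alternating reachability propagates across matching edges in both directions and the clean translation of "no $M$-augmenting path" into the statement that $B \cap Z$ is $M$-saturated; beyond this the argument is purely combinatorial and needs no machinery more sophisticated than the definitions of matching and alternating path.
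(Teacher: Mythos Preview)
Your proof is correct and is the standard textbook argument for K\"onig's theorem via $M$-alternating reachability. The only point that could be spelled out a bit more explicitly is why the two bijections you describe exhaust all edges of $M$: you need the observation that if $a \in A \cap Z$ is $M$-saturated then its mate lies in $B \cap Z$ (since any alternating path from $U$ reaching $a$ must arrive via the matching edge incident to $a$), which rules out the mixed case $(a,b)$ with $a \in A \cap Z$ and $b \in B \setminus Z$. This is implicit in your reasoning but not stated outright.

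As for comparison with the paper: the paper does not prove this statement at all. The K\"onig--Egerv\'ary Theorem is quoted as a classical result (Theorem~\ref{EK}) and invoked repeatedly as a tool---for instance to bound $|T_1|+|T_2| = \alpha'(\mathcal{R})$ and $|S_1|+|S_2| = \alpha'(\mathcal{B})$ in the proofs of Lemmas~\ref{3} and~\ref{4} and of Theorem~\ref{1}---but no proof is supplied. So there is nothing to compare your argument against; you have simply filled in a proof the authors assumed as background.
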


We will prove several crucial lemmas before giving the proofs of Theorem \ref{1} and Theorem \ref{Sta}.

\begin{lemma}\label{3} For any $0\leq\gamma<\frac{1}{4}$, there exists an integer $n_0>0$ such that for any $m>n\geq n_0$ the following holds. Let $m<3n$ if $\gamma\neq0$. Let $G[V_1, V_2]$ be a balanced bipartite graph on $2(m+n-1)$ vertices with minimum degree $\delta(G)>(\frac{3}{4}+\gamma)(m+n-1)$. Suppose that for some red-blue-edge-coloring of $G$ which is not $\gamma$-missing, there exists no red connected matching of size $(1+\gamma)m$. Then either there exists a blue component of G with at least $(1+\gamma)n$ vertices in each of $V_1$ and $V_2$, or the edge coloring is a $\gamma$-coloring.
\end{lemma}
\begin{proof} When $\gamma=0$, we set $n_0=1$. When $\gamma>0$, we set $n_0=1+\frac{1}{\gamma}$. By Lemma \ref{2}, for each red-blue-edge-coloring of $G$ which is not $\gamma$-missing, there exists either a red component with at least $(1+\gamma)m$ vertices in each of $V_1$ and $V_2$; or a blue component with at least $(1+\gamma)n$ vertices in each of $V_1$ and $V_2$. In the latter case, we are done. Thus we assume that $\mathcal{R}$ is a largest red component of $G$ such that $|\mathcal{R}\cap V_i|\geq(1+\gamma)m$ for each $i\in[2]$. Let $T$ be a minimum vertex cover of $\mathcal{R}$. For each $i\in[2]$, let $R_i=\mathcal{R}\cap V_i$, $T_i=T\cap V_i$, $R'_i=R_i\backslash T_i$ and $V'_i=V_i\backslash R_i$, then $V_i=T_i\uplus R'_i\uplus V'_i$. Let $N:=m+n-1$.

\begin{claim}\label{b0} For each $i\in[2]$, the following holds.

{\rm (i)} If $x\in R'_i$, then $N_R(x)\subseteq T_{3-i}$ and $N_G(x)\cap(R'_{3-i}\uplus V'_{3-i})\subseteq N_B(x)$.

{\rm (ii)} If $x\in V'_i$, then $N_R(x)\subseteq V'_{3-i}$ and $N_G(x)\cap(R'_{3-i}\uplus T_{3-i})\subseteq N_B(x)$.
\end{claim}

\begin{proof} (i) Let $x\in R'_i=R_i\backslash T_i$. Since $T$ is a minimum vertex cover of $\mathcal{R}$, $N_R(x)\subseteq T_{3-i}$. Then $N_G(x)\cap(V_{3-i}\backslash T_{3-i})=N_G(x)\cap(R'_{3-i}\uplus V'_{3-i})\subseteq N_B(x)$.

(ii) Let $x\in V'_i=V_i\backslash R_i$. If $N_R(x)\cap R_{3-i}\neq\emptyset$, since $\mathcal{R}$ is a largest red component, $x\in R_i$, a contradiction. Then $N_R(x)\subseteq V_{3-i}\backslash R_{3-i}=V'_{3-i}$, and so $N_G(x)\cap R_{3-i}\subseteq N_B(x)$, that is $N_G(x)\cap(R'_{3-i}\uplus T_{3-i})\subseteq N_B(x)$.
\end{proof}

By Theorem \ref{EK},
\begin{equation}\label{b2}
|T_1|+|T_2|=|T|=\alpha'(\mathcal{R})<(1+\gamma)m.
\end{equation}
 Without loss of generality, assume that
\begin{equation}\label{b3}
|T_1|<\frac{1+\gamma}{2}m.
\end{equation}
For each $i\in[2]$, by the hypothesis, we have that
\begin{equation}\label{b5}
|R_i|\geq(1+\gamma)m,
\end{equation}
then $R'_i=R_i\backslash T_i\neq\emptyset$.
Now $|R'_1\uplus V'_1|=|V_1\backslash T_1|\overset{(\ref{b3})}{>}
\frac{1-\gamma}{2}m+n-1>(\frac{1}{2}-2\gamma)N$. By Claim \ref{b0}(i), $G[R'_1\uplus V'_1, R'_2]\subseteq G_B$. By Fact \ref{pr2}(i), $R'_2$ is contained in a blue component, say $\mathcal{B}$. For each $i\in[2]$, let $B_i=\mathcal{B}\cap V_i$. Then $R'_2\subseteq B_2$.

\begin{claim}\label{JT7}
The following holds.

{\rm (i)} $|B_1|\geq(1+\gamma)n$.

{\rm (ii)} $|T_2|>\frac{1-\gamma}{2}N$.

{\rm (iii)} $R'_2\uplus V'_2\subseteq B_2$.
\end{claim}
\begin{proof} (i) Recall that $R'_2\subseteq B_2$. Let $x\in R'_2$, by Claim \ref{b0}(i),
$|B_1|\geq|N_B(x)\cap(R'_1\uplus V'_1)|\geq\delta(G)-|T_1|
\overset{(\ref{b3})}{>}(\frac{1}{4}+\frac{\gamma}{2})m+(\frac{3}{4}+\gamma)(n-1)\geq (1+\gamma)n+\frac{1}{2}(\gamma n-\gamma-1)$ since $m\geq n+1$. If $\gamma=0$, then $|B_1|>n-\frac{1}{2}$, that is $|B_1|\geq n$. If $\gamma\neq0$, now $n\geq1+\frac{1}{\gamma}$, then $|B_1|>(1+\gamma)n$.

(ii) Suppose that $|T_2|\leq\frac{1-\gamma}{2}N$, then $|R'_2\uplus V'_2|=|V_2\backslash T_2|\geq\frac{1+\gamma}{2}N$. By Claim \ref{b0}(i), $G[R'_1, R'_2\uplus V'_2]\subseteq G_B$. By Fact \ref{pr2}(i), $R'_1$ is contained in a blue component of $G$.
For any $x\in R'_2\uplus V'_2$, by Claim \ref{b0},
$|N_B(x)\cap R'_1|\geq\delta(G)-|T_1\uplus V'_1|=\delta(G)-|T_1|-|V_1\setminus R_1|
>|R_1\setminus T_1|-(\frac{1}{4}-\gamma)N\underset{(\ref{b5})}{\overset{(\ref{b3})}{>}}
(\frac{1}{4}+\frac{3}{2}\gamma)m-(\frac{1}{4}-\gamma)(n-1)>0$ since $m\geq n+1$. Thus the blue component $\mathcal{B}$ contains both $R'_1$ and $R'_2\uplus V''_2$. Now $|B_2|\geq|R'_2\uplus V'_2|\geq\frac{1+\gamma}{2}N\geq(1+\gamma)n$ since $m\geq n+1$ and $|B_1|\geq(1+\gamma)n$ by (i), which is done.

(iii) By (ii), $|T_1|\overset{(\ref{b2})}{<}(1+\gamma)m-|T_2|<(1+\gamma)m-\frac{1-\gamma}{2}N$, then $|R'_1|=|R_1\backslash T_1|\overset{(\ref{b5})}{>}\frac{1-\gamma}{2}N$. By Claim \ref{b0}(i), $G[R'_1, R'_2\uplus V'_2]\subseteq G_B$. By Fact \ref{pr2}(i), the blue component $\mathcal{B}$ contains $R'_2\uplus V'_2$, and so $R'_2\uplus V'_2\subseteq B_2$.
\end{proof}

If $\gamma=0$, then $|B_1|\geq n$ by Claim \ref{JT7}(i) and $|B_2|\geq|R'_2\uplus V'_2|=|V_2\backslash T_2|\overset{(\ref{b2})}{\geq}n$ by Claim \ref{JT7}(iii), which is done.

Next we assume that $\gamma\neq0$, now $3n>m>n\geq1+\frac{1}{\gamma}$. Let $S$ be a minimum vertex cover of $\mathcal{B}$. For each $i\in[2]$, let $S_i=S\cap V_i$.

\begin{claim}\label{sta1} We have the following properties.

{\rm (i)} $|S_1|+|S_2|=|S|<(1+\gamma)n$.

{\rm (ii)} $R'_1\subseteq B_1\subseteq R_1$.

{\rm (iii)} $|R_1|>(\frac{3}{4}+\gamma)N$.

{\rm(iv)} $V_2=S_2\cup T_2$.
\end{claim}
\begin{proof} (i) If $\alpha'(\mathcal{B})\geq(1+\gamma)n$, implying that $\mathcal{B}$ has a matching of size $(1+\gamma)n$, then $|B_i|\geq(1+\gamma)n$ for each $i\in[2]$, which is done. Thus we can assume that $\alpha'(\mathcal{B})<(1+\gamma)n$. By Theorem \ref{EK},  $|S_1|+|S_2|=|S|=\alpha'(\mathcal{B})<(1+\gamma)n$.

(ii) For any $x\in R'_1$, by Claim \ref{b0}(i), $|N_B(x)\cap(R'_2\uplus V'_2)|\geq\delta(G)-|T_2|\overset{(\ref{b2})}{>}(\frac{3}{4}+\gamma)N-(1+\gamma)m\geq0$ since $3n>m>1+\frac{1}{\gamma}$. By Claim \ref{JT7}(iii), $R'_1\subseteq B_1$.

Suppose that $V'_1\cap B_1\neq\emptyset$. Let $x\in V'_1\cap B_1=(V_1\backslash R_1)\cap B_1$, then $N_R(x)\subseteq V_2\backslash R_2=V'_2\subseteq B_2$ by Claim \ref{JT7}(iii), and $N_B(x)\subseteq B_2$ since $\mathcal{B}$ is the blue component containing $B_1$. Thus $|B_2|\geq\delta(G)>(\frac{3}{4}+\gamma)N\geq(\frac{3}{2}+2\gamma)n$ since $m\geq n+1$ and $|B_1|\geq(1+\gamma)n$ by Claim \ref{JT7}(i), which is done. Thus we have that $V'_1\cap B_1=\emptyset$. Now $R'_1\subseteq B_1\subseteq V_1\backslash V'_1=R_1$.

(iii) Recall that $R'_2\neq\emptyset$. Let $x\in R'_2$. By Claim \ref{JT7}(iii), $N_B(x)\subseteq B_1\subseteq R_1$ by (ii). By Claim \ref{b0}(i), $N_R(x)\subseteq T_1$. Then $N_G(x)\subseteq R_1$, and so $|R_1|\geq\delta(G)>(\frac{3}{4}+\gamma)N$.

(iv) Suppose that $R'_2\backslash S_2\neq\emptyset$. Let $x\in R'_2\backslash S_2$. By Claim \ref{b0}(i), $N_R(x)\subseteq T_1$. By Claim \ref{JT7}(iii), $N_B(x)\subseteq S_1$ since $S$ is a minimum vertex cover of $\mathcal{B}$. Now $|S_1\cup T_1|\geq\delta(G)$, then $|S_1|\geq\delta(G)-|T_1|\overset{(\ref{b3})}{>}
(\frac{1}{4}+\frac{\gamma}{2})m+(\frac{3}{4}+\gamma)(n-1)\geq(1+\gamma)n$ since $m>n\geq1+\frac{1}{\gamma}$, a contradiction to (i). Thus $R'_2\subseteq S_2$.

Suppose that $V'_2\backslash S_2\neq\emptyset$. Let $x\in V'_2\backslash S_2$. Since $V'_2=V_2\backslash R_2$, $N_R(x)\subseteq V_1\backslash R_1=V'_1$. By Claim \ref{JT7}(iii), $N_B(x)\subseteq S_1$ since $S$ is a minimum vertex cover of $\mathcal{B}$. Now $|S_1\cup V'_1|\geq\delta(G)$. By (iii), $|V'_1|=|V_1\backslash R'_1|<(\frac{1}{4}-\gamma)N$. Then $|S_1|\geq\delta(G)-|V'_1|>(\frac{1}{2}+2\gamma)N\geq(1+4\gamma)n$ since $m\geq n+1$, a contradiction to (i). Thus $V'_2\subseteq S_2$.
Now $V_2=T_2\uplus R'_2\uplus V'_2=S_2\cup T_2$.
\end{proof}

Combining inequality (\ref{b2}) and Claim \ref{sta1}(i),
$|S\cup T|\leq|S|+|T|<(1+\gamma)(m+n)$.
By Claim \ref{sta1}(iv), $|S_1\cup T_1|=|S\cup T|-|S_2\cup T_2|=|S\cup T|-|V_2|<\gamma(m+n)+1$. By Claim \ref{sta1}(ii)-(iii), $|R'_1\backslash S_1|=|R_1\backslash(S_1\cup T_1)|
>(\frac{3}{4}+\gamma)N-|S_1\cup T_1|>\frac{3}{4}N-\gamma-1$.

Let $x\in R'_1\backslash S_1$. By Claim \ref{b0}(i), $N_R(x)\subseteq T_2$. Since $S$ is a minimum vertex cover of $\mathcal{B}$, $N_B(x)\subseteq S_2$. By Claim \ref{sta1}(iv), $|S_2\backslash T_2|=|V_2\backslash T_2|\overset{(\ref{b2})}{>}n-\gamma m-1$ and $|T_2\backslash S_2|=|V_2\backslash S_2|>m-\gamma n-1$ by Claim \ref{sta1}(i). Now the coloring is a $\gamma$-coloring as witnessed by $R'_1\backslash S_1\subseteq V_1$ and the partition $\{S_2\backslash T_2, S_2\cap T_2, T_2\backslash S_2\}$ of $V_2$.
\end{proof}

\begin{lemma}\label{4} Let $0\leq\gamma<\frac{1}{4}$ and let $G[V_1, V_2]$ be a balanced bipartite graph on $2(m+n-1)$ vertices with minimum degree $\delta(G)>(\frac{3}{4}+\gamma)(m+n-1)$, where $m>n$. Suppose that for some red-blue-edge-coloring of $G$ which is not $\gamma$-missing, there exists no blue connected matching of size $(1+\gamma)n$. Suppose that $\mathcal{B}$ is a largest blue component of G such that $|\mathcal{B}\cap V_i|\geq(1+\gamma)n$ for each $i\in[2]$. Let $S$ be a minimum vertex cover of $\mathcal{B}$, then $\mathcal{B}\setminus S$ is contained in a red component of $G$.
\end{lemma}
\begin{proof} For each $i\in[2]$, let $B_i=\mathcal{B}\cap V_i$, $S_i=S\cap V_i$, $B'_i=B_i\backslash S_i$ and $V'_i=V_i\backslash B_i$, then $V_i=S_i\uplus B'_i\uplus V'_i$. Let $N:=m+n-1$.

\begin{claim}\label{cc0} For each $i\in[2]$, the following holds.

{\rm (i)} If $x\in B'_i$, then $N_B(x)\subseteq S_{3-i}$ and $N_G(x)\cap(B'_{3-i}\uplus V'_{3-i})\subseteq N_R(x)$.

{\rm (ii)} If $x\in V'_i$, then $N_B(x)\subseteq V'_{3-i}$ and $N_G(x)\cap(B'_{3-i}\uplus S_{3-i})\subseteq N_R(x)$.

\end{claim}
\begin{proof} (i) Let $x\in B'_i=B_i\backslash S_i$. Since $S$ is a minimum vertex cover of $\mathcal{B}$, $N_B(x)\subseteq S_{3-i}$. Then $N_G(x)\cap(V_{3-i}\backslash S_{3-i})=N_G(x)\cap(B'_{3-i}\uplus V'_{3-i})\subseteq N_R(x)$.

(ii) Let $x\in V'_i=V_i\backslash B_i$. If $N_B(x)\cap B_{3-i}\neq\emptyset$, then since $\mathcal{B}$ is a largest blue component, $x\in B_i$, a contradiction. Then $N_B(x)\subseteq V_{3-i}\backslash B_{3-i}=V'_{3-i}$, and so $N_G(x)\cap B_{3-i}\subseteq N_R(x)$, that is $N_G(x)\cap(B'_{3-i}\uplus S_{3-i})\subseteq N_R(x)$.
\end{proof}

By Theorem \ref{EK},
\begin{equation}\label{c3}
|S_1|+|S_2|=|S|=\alpha'(B)<(1+\gamma)n.
\end{equation}
Without loss of generality, assume that
\begin{equation}\label{c4}
|S_1|<\frac{1+\gamma}{2}n.
\end{equation}
Let $i\in[2]$. By the hypothesis, we have that
\begin{equation}\label{cc1}
|B_i|\geq(1+\gamma)n,
\end{equation}
then $B'_i=B_i\backslash S_i\neq\emptyset$. 
By Claim \ref{cc0}(i), $G[B'_i, B'_{3-i}\uplus V'_{3-i}]\subseteq G_R$. Note that
$|B'_{3-i}\uplus V'_{3-i}|=|V_{3-i}\backslash S_{3-i}|\overset{(\ref{c3})}{>}m-\gamma n-1\geq(\frac{1}{2}-2\gamma)N$ since $m\geq n+1$. By Fact \ref{pr2}(ii), $B'_i$ is contained in a red component of G, say $\mathcal{H}_i$.

If $\mathcal{H}_1=\mathcal{H}_2$, then we are done. Thus we assume that $\mathcal{H}_1\neq\mathcal{H}_2$, that is $\mathcal{H}_1\cap \mathcal{H}_2=\emptyset$ and $G_R[V(\mathcal{H}_1), V(\mathcal{H}_2)]=\emptyset$. For each $i\in[2]$, let $S^1_i=\mathcal{H}_1\cap S_i$, $S^2_i=\mathcal{H}_2\cap S_i$ and $S^3_i=S_i\backslash(S^1_i\uplus S^2_i)$; and
let $C^1_i=\mathcal{H}_1\cap V'_i$, $C^2_i=\mathcal{H}_2\cap V'_i$ and
$C^3_i=V'_i\backslash(C^1_1\uplus C^2_2)$.

Let $x\in B'_1$, $N_R(x)\subseteq\mathcal{H}_1\cap V_2=S^1_2\uplus C^1_2$ and by Claim \ref{cc0}(i), $N_B(x)\subseteq S_2$, thus $N_G(x)\subseteq S_2\uplus C^1_2$. That is
\begin{equation}\label{c7}
|S_2|+|C^1_2|\geq\delta(G)>(\frac{3}{4}+\gamma)N.
\end{equation}
Let $x\in B'_2$, $N_R(x)\subseteq \mathcal{H}_2\cap V_1=S^2_1\uplus C^2_1$ and by Claim \ref{cc0}(i), $N_B(x)\subseteq S_1$, thus $N_G(x)\subseteq S_1\uplus C^2_1$. That is
\begin{equation}\label{c9}
|S_1|+|C^2_1|\geq\delta(G)>(\frac{3}{4}+\gamma)N.
\end{equation}
Combining with $m\geq n+1$, we have that
\begin{equation}\label{c10}
|C^2_1|>(\frac{3}{4}+\gamma)N-|S_1|
\overset{(\ref{c4})}{>}(\frac{3}{4}+\gamma)(m-1)+(\frac{1}{4}+\frac{\gamma}{2})n\geq\frac{N}{2}.
\end{equation}

\begin{claim}\label{JT6}
For each $i\in[2]$, $\mathcal{H}_i\cap V_i=B_i\uplus C^i_i$ and $\mathcal{H}_i\cap V_{3-i}=C^i_{3-i}$.
\end{claim}
\begin{proof} Let $x\in C^1_2=\mathcal{H}_1\cap V'_2$. By Claim \ref{cc0}(ii), $N_B(x)\subseteq V'_1$. Since $\mathcal{H}_1$ is the red component containing $C^1_2$, $N_R(x)\subseteq \mathcal{H}_1\cap V_1=B'_1\uplus S^1_1\uplus C^1_1$. Then $N_G(x)\cap(S^2_1\uplus S^3_1)=\emptyset$. Thus $G[S^2_1\uplus S^3_1, C^1_2]=\emptyset$. Note that
$|C^1_2|\overset{(\ref{c7})}{>}(\frac{3}{4}+\gamma)N-|S_2|
\overset{(\ref{c3})}{>}(\frac{3}{4}+\gamma)(m-1)-\frac{n}{4}\geq(\frac{1}{4}-\gamma)N$ since $m\geq n+1$. Then $S^2_1\uplus S^3_1=\emptyset$, otherwise by Fact \ref{pr1}, $|C^1_2|<(\frac{1}{4}-\gamma)N$, a contradiction. Now $S_1\subseteq \mathcal{H}_1\cap V_1$, and so $B_1\subseteq \mathcal{H}_1\cap V_1$.
Then $\mathcal{H}_1\cap V_1=B_1\uplus C^1_1$. Since $\mathcal{H}_1\cap \mathcal{H}_2=\emptyset$, $\mathcal{H}_2\cap V_1=C^2_1$.

Let $x\in C^2_1=\mathcal{H}_2\cap V'_1$. In a similar way to the above, we can get that
$\mathcal{H}_2\cap V_2=B_2\uplus C^2_2$ and $\mathcal{H}_1\cap V_2=C^1_2$.
\end{proof}

We split our argument into two cases.

{\bf Case 1.} $|C^1_2|\leq\frac{N}{2}$.

Note that $|S_2|\overset{(\ref{c7})}{>}(\frac{3}{4}+\gamma)N-|C^1_2|\geq(\frac{1}{4}+\gamma)N$, then
$|S_1|\overset{(\ref{c3})}{<}(1+\gamma)n-|S_2|<\frac{3}{4}n-(\frac{1}{4}+\gamma)(m-1)$,
implying that $m-1<\frac{3n}{1+4\gamma}$. Now
$|C^2_1|\overset{(\ref{c9})}{>}(\frac{3}{4}+\gamma)N-|S_1|>(1+2\gamma)(m-1)+\gamma n.$

By Claim \ref{JT6}, $B_2\subseteq \mathcal{H}_2\cap V_2$. For any $x\in B_2$, $N_B(x)\subseteq B_1$ and by Claim \ref{JT6}, $N_R(x)\subseteq \mathcal{H}_2\cap V_1=C^2_1$, thus $N_G(x)\cap(V'_1\backslash C_1^2)=N_G(x)\cap(C^1_1\uplus C^3_1)=\emptyset$. That is $G[C^1_1\uplus C^3_1, B_2]=\emptyset$. Now $C^1_1\uplus C^3_1=\emptyset$, otherwise by Fact \ref{pr1}, $|B_2|<(\frac{1}{4}-\gamma)N<(1+\gamma)n$ since $m-1<\frac{3n}{1+4\gamma}$, a contradiction to inequality (\ref{cc1}). Thus $V'_1=C^2_1$, now $|V_1|=|B_1\uplus C^2_1|\overset{(\ref{cc1})}{\geq}(1+\gamma)n+|C^2_1|>(1+\gamma)n+(1+2\gamma)(m-1)+\gamma n=(1+2\gamma)N$, a contradiction.

{\bf Case 2.} $|C^1_2|>\frac{N}{2}$.

Let $x\in C^2_1=\mathcal{H}_2\cap V'_1$. By Claim \ref{cc0}(ii), $N_B(x)\subseteq V'_2=C^1_2\uplus C^2_2\uplus C^3_2$. By Claim \ref{JT6}, $N_R(x)\subseteq \mathcal{H}_2\cap V_2=B_2\uplus C^2_2$. Thus $G[C^2_1, C^1_2\uplus C^3_2]\subseteq G_B$. Since $|C^1_2|>\frac{N}{2}$, by Fact \ref{pr2}(i), $C^2_1$ is contained in a blue component of $G$, say $\mathcal{H}$.

Let $x\in C^1_2=\mathcal{H}_1\cap V'_2$. By Claim \ref{cc0}(ii), $N_B(x)\subseteq V'_1=C^1_1\uplus C^2_1\uplus C^3_1$. By Claim \ref{JT6}, $N_R(x)\subseteq V(\mathcal{H}_1)\cap V_1=B_1\uplus C^1_1$. Then
$|N_B(x)\cap C^2_1|\geq\delta(G)-|V_1\backslash C^2_1|>|C^2_1|-(\frac{1}{4}-\gamma)N\overset{(\ref{c10})}{>}(\frac{1}{4}+\gamma)N$.
Since $C^2_1\subseteq \mathcal{H}\cap V_1$, $C^1_2\subseteq \mathcal{H}\cap V_2$. Now $|\mathcal{H}\cap V_1|\geq|C^2_1|\overset{(\ref{c10})}{>}\frac{N}{2}\geq|V_1\backslash C^2_1|\geq|B_1|$. By the assumption $|C^1_2|>\frac{N}{2}$, $|\mathcal{H}\cap V_2|\geq|C^1_2|>\frac{N}{2}\geq|V_2\backslash C^1_2|\geq|B_2|$, a contradiction to the maximality of $\mathcal{B}$.
\end{proof}

\noindent{\bf Proof of Theorem \ref{1}.} Suppose that there exists a red-blue-edge-coloring of $G$ yielding neither red connected $m$-matching nor blue connected $n$-matching. Let $\gamma=0$ and $n_0=1$. By Lemma \ref{3}, we can assume that $\mathcal{B}$ is a largest blue component such that $|\mathcal{B}\cap V_i|\geq n$ for each $i\in[2]$. Let $S$ be a minimum vertex cover of $\mathcal{B}$. By Lemma \ref{4}, $\mathcal{B}\backslash S$ is contained in some red component of $G$, say $\mathcal{R}$. Let $T$ be a minimum vertex cover of $\mathcal{R}$. For each $i\in[2]$, let $B_i=\mathcal{B}\cap V_i$, $S_i=S\cap V_i$, $B'_i=B_i\backslash S_i$ and $V'_i=V_i\backslash B_i$. For each $i\in[2]$, let $R_i=\mathcal{R}\cap V_i$, $T_i=T\cap V_i$, $T'_i=T_i\backslash B_i$, $R'_i=R_i\backslash(B_i\cup T_i)$, and $V''_i=V_i\backslash(B_i\cup R_i)$. For each $i\in[2]$, $B'_i\subseteq B_i\cap R_i$ and $V'_i=R'_i\uplus T'_i\uplus V''_i$.

\begin{claim}\label{d0} For each $i\in[2]$, the following holds.

{\rm (i)} If $x\in B'_i\backslash T_i$, then $N_B(x)\subseteq S_{3-i}$ and $N_R(x)\subseteq T_{3-i}$.

{\rm (ii)} If $x\in R'_i$, then $N_B(x)\subseteq T'_{3-i}\uplus R'_{3-i}\uplus V''_{3-i}$ and $N_R(x)\subseteq T_{3-i}$.

{\rm (iii)} If $x\in V''_i$, then $N_B(x)\subseteq T'_{3-i}\uplus R'_{3-i}\uplus V''_{3-i}$ and $N_R(x)\subseteq(S_{3-i}\backslash R_{3-i})\uplus V''_{3-i}$.

{\rm (iv)} If $x\in S_i\backslash R_i$, then $N_B(x)\subseteq B_{3-i}$ and $N_R(x)\subseteq(S_{3-i}\backslash R_{3-i})\uplus V''_{3-i}$.

{\rm (v)} If $x\in (S_i\cap R_i)\backslash T_i$, then $N_B(x)\subseteq B_{3-i}$ and $N_R(x)\subseteq T_{3-i}$.

{\rm (vi)} $G[V''_i, B_{3-i}\cap R_{3-i}]=\emptyset$.

{\rm (vii)} $G[S_i\backslash R_i, T'_{3-i}\uplus R'_{3-i}]=\emptyset$.
\end{claim}
\begin{proof} (i) Let $x\in B'_i\backslash T_i\subseteq(B_i\cap R_i)\backslash T_i$. Since $T$ is a minimum vertex cover of $\mathcal{R}$, $N_R(x)\subseteq T_{3-i}$. Since $S$ is a minimum vertex cover of $\mathcal{B}$, $N_B(x)\subseteq S_{3-i}$.

(ii) Let $x\in R'_i=R_i\backslash(B_i\cup T_i)$. If $N_B(x)\cap B_{3-i}\neq\emptyset$, then since $\mathcal{B}$ is a largest blue component, $x\in B_i$, a contradiction. Thus $N_B(x)\subseteq V_{3-i}\backslash B_{3-i}=T'_{3-i}\uplus R'_{3-i}\uplus V''_{3-i}$. Since $T$ is a minimum vertex cover of $\mathcal{R}$, $N_R(x)\subseteq T_{3-i}$.

(iii) Let $x\in V''_i=V_i\backslash(B_i\cup R_i)$. If $N_B(x)\cap B_{3-i}\neq\emptyset$, then since $\mathcal{B}$ is a largest blue component, $x\in B_i$, a contradiction. Thus $N_B(x)\subseteq V_{3-i}\backslash B_{3-i}=T'_{3-i}\uplus R'_{3-i}\uplus V''_{3-i}$. If $N_R(x)\cap R_{3-i}\neq\emptyset$, then since $\mathcal{R}$ is a largest red component, $x\in R_i$, a contradiction. Thus $N_R(x)\subseteq V_{3-i}\backslash R_{3-i}=(S_{3-i}\backslash R_{3-i})\uplus V''_{3-i}$.

(iv) Let $x\in S_i\backslash R_i\subseteq B_i\backslash R_i$. Since $\mathcal{B}$ is a largest blue component, $N_B(x)\subseteq B_{3-i}$. If $N_R(x)\cap R_{3-i}\neq\emptyset$, then since $\mathcal{R}$ is a largest red component, $x\in R_i$, a contradiction. Thus $N_R(x)\subseteq V_{3-i}\backslash R_{3-i}=(S_{3-i}\backslash R_{3-i})\uplus V''_{3-i}$.

(v) Let $x\in (S_i\cap R_i)\backslash T_i$. Since $\mathcal{B}$ is a largest blue component, $N_B(x)\subseteq B_{3-i}$. Since $T$ is a minimum vertex cover of $\mathcal{R}$, $N_R(x)\subseteq T_{3-i}$.

(vi) If $V''_i=\emptyset$ or $B_{3-i}\cap R_{3-i}=\emptyset$, then we are done. Suppose that $V''_i\neq\emptyset$ and $B_{3-i}\cap R_{3-i}\neq\emptyset$. For any $x\in V''_{3-i}$, by (iii), $N_B(x)\subseteq V_{3-i}\backslash B_{3-i}$ and $N_R(x)\subseteq V_{3-i}\backslash R_{3-i}$, then $N_G(x)\cap(B_{3-i}\cap R_{3-i})=\emptyset$. Thus $G[V''_i, B_{3-i}\cap R_{3-i}]=\emptyset$.

(vii) If $S_i\backslash R_i=\emptyset$ or $T'_{3-i}\uplus R'_{3-i}=\emptyset$, then we are done. Suppose that $S_i\backslash R_i\neq\emptyset$ and $T'_{3-i}\uplus R'_{3-i}\neq\emptyset$. For any $x\in S_i\backslash R_i$, by (iv), $N_B(x)\subseteq B_{3-i}$ and $N_R(x)\subseteq(S_{3-i}\backslash R_{3-i})\uplus V''_{3-i}$, then $N_G(x)\cap(T'_{3-i}\uplus R'_{3-i})=\emptyset$. Thus $G[S_i\backslash R_i, T'_{3-i}\uplus R'_{3-i}]=\emptyset$.
\end{proof}

By Theorem \ref{EK},
\begin{equation}\label{d4}
|S_1|+|S_2|=|S|=\alpha'(\mathcal{B})\leq n-1,
\end{equation}
and
\begin{equation}\label{d3}
|T_1|+|T_2|=|T|=\alpha'(\mathcal{R})\leq m-1.
\end{equation}
Then
\begin{equation}\label{d5}
|S\cup T|\leq|S|+|T|\leq m+n-2.
\end{equation}
Let $i\in[2]$. By the hypothesis, we have that
\begin{equation}\label{d1}
|B_i|=|S_i|+|B'_i|\geq n,
\end{equation}
then $B'_i=B_i\backslash S_i\neq\emptyset$. Since $B'_i\subseteq B_i\cap R_i$, by Claim \ref{d0}(vi) and Proposition \ref{pr1},
\begin{equation}\label{d8}
|V''_i|<\frac{m+n-1}{4}.
\end{equation}

Let $i\in[2]$. If $x\in B'_i\backslash T_i$, then by Claim \ref{d0}(i), $N_G(x)\subseteq S_{3-i}\cup T_{3-i}$, so $|S_{3-i}\cup T_{3-i}|\geq\delta(G)$. If $B'_i\backslash T_i\neq\emptyset$ for each $i\in[2]$, then $|S\cup T|=|S_1\cup T_1|+|S_2\cup T_2|\geq2\delta(G)>\frac{3}{2}(m+n-1)$, a contradiction to inequality (\ref{d5}). Thus either $B'_1\subseteq T_1$ or $B'_2\subseteq T_2$. Without loss of generality, assume that
\begin{equation}\label{JT1}
B'_1\subseteq T_1.
\end{equation}
Then $B_1\cup T_1=S_1\cup T_1$, and so
\begin{equation}\label{(ii)}
|R'_1\uplus V''_1|=|V_1\backslash(B_1\cup T_1)|=|V_1\backslash(S_1\cup T_1)|\overset{(\ref{d5})}{\geq}1.
\end{equation}
By (\ref{JT1}), we have that
\begin{equation}\label{91}
V_1=(S_1\backslash T_1)\uplus T_1\uplus R'_1\uplus V''_1.
\end{equation}
Let $B''_2=B'_2\backslash T_2$, $ST_2=S_2\cap T_2$, $SR_2=(S_2\cap R_2)\backslash T_2$ and $S'_2=S_2\backslash R_2$. Then
\begin{equation}\label{92}
V_2=S'_2\uplus SR_2\uplus B''_2\uplus T_2\uplus R'_2\uplus V''_2.
\end{equation}

For any $x\in R'_1$, $N_G(x)\subseteq T_2\uplus R'_2\uplus V''_2$ by Claim \ref{d0}(ii), then $N_G(x)\cap(S'_2\uplus SR_2\uplus B''_2)=\emptyset$ by (\ref{92}). Thus $G[R'_1, S'_2\uplus SR_2\uplus B''_2]=\emptyset$. Since $B_2\cap R_2=ST_2\uplus SR_2\uplus B'_2$, $G[V''_1, ST_2\uplus SR_2\uplus B'_2]=\emptyset$ by Claim \ref{d0}(vi). Thus $G[R'_1\uplus V''_1, SR_2\uplus B''_2]=\emptyset$. By (\ref{(ii)}) and Proposition \ref{pr1}, we have that
\begin{equation}\label{105}
|B''_2\uplus SR_2|<\frac{m+n-1}{4}.
\end{equation}

Next we split our argument into two cases.

{\bf Case 1.} $B'_1\subseteq T_1$ and $B'_2\backslash T_2\neq\emptyset$.

Now $B''_2=B'_2\backslash T_2\neq\emptyset$. For any $x\in B''_2$, $N_G(x)\subseteq S_1\cup T_1$ by Claim \ref{d0}(i). Then
\begin{equation}\label{d9}
|S_1\cup T_1|\geq\delta(G)>\frac{3}{4}(m+n-1),
\end{equation}
and so
\begin{equation}\label{d11}
|S_2\cup T_2|=|S\cup T|-|S_1\cup T_1|\overset{(\ref{d5})}{<}\frac{m+n-5}{4}.
\end{equation}
Now $|B_2\cup T_2|=|S'_2\uplus SR_2\uplus B''_2\uplus T_2|=|S'_2\uplus T_2|+|SR_2\uplus B''_2|
\overset{(\ref{105})}{<}|S_2\cup T_2|+\frac{m+n-1}{4}\overset{(\ref{d11})}{<}\frac{m+n-3}{2}$, then
\begin{equation}\label{d12}
|R'_2|=|V_2|-|B_2\cup T_2|-|V''_2|\overset{(\ref{d8})}{>}\frac{m+n+3}{4}.
\end{equation}
By (\ref{JT1}), $B_1\cup T_1=S_1\cup T_1$, then
\begin{equation}\label{d10}
|R'_1\uplus V''_1|=|V_1\backslash(B_1\cup T_1)|=|V_1\backslash(S_1\cup T_1)|\overset{(\ref{d9})}{<}\frac{m+n-1}{4}.
\end{equation}

For any $x\in R'_2$, $N_B(x)\subseteq T'_1\uplus R'_1\uplus V''_1$ and $N_R(x)\subseteq T_1$ by Claim \ref{d0}(ii), then $N_G(x)\cap(S_1\backslash T_1)=\emptyset$ by (\ref{91}). So $G[S_1\backslash T_1, R'_2]=\emptyset$. If $S_1\backslash T_1\neq\emptyset$, then by Proposition \ref{pr1}, $|R'_2|<\frac{m+n-1}{4}$, contradicting to inequality (\ref{d12}). Then $S_1\subseteq T_1$, and so $B_1\subseteq T_1$ by (\ref{JT1}). Now $V_1=T_1\uplus R'_1\uplus V''_1$, thus
\begin{equation}\label{d13}
|R'_1\uplus V''_1|=|V_1|-|T_1|\overset{(\ref{d3})}{\geq}n.
\end{equation}
Combining with inequality (\ref{d10}), we have that
\begin{equation}\label{d14}
m\geq 3n+2.
\end{equation}
By the assumption $B''_2\neq\emptyset$, we have that
\begin{equation}\label{d17}
|S'_2\uplus T_2\uplus V''_2|\overset{(\ref{92})}{=}
|V_2\backslash(SR_2\uplus B''_2\uplus R'_2)|<|V_2\backslash R'_2|.
\end{equation}

\begin{claim}\label{JT4}
$R'_1\uplus V''_1$ is contained in some blue component of $G$, say $\mathcal{H}_1$.
\end{claim}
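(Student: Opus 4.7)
The plan is to show that any two vertices $u, v \in R'_1 \uplus V''_1$ share a common blue neighbor in $R'_2$; since $R'_1 \uplus V''_1 \neq \emptyset$ by Claim \ref{JT1}(ii), this forces all of $R'_1 \uplus V''_1$ into a single blue component $\mathcal{F}$ of $G$.

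The first technical step is to distill (\ref{d15}) and (\ref{d16}) into a uniform lower bound. For $u \in R'_1$, (\ref{d15}) gives $|N_B(u) \cap R'_2| \geq \delta(G) - |T_2| - |V''_2|$; for $u \in V''_1$, (\ref{d16}) gives $|N_B(u) \cap R'_2| \geq \delta(G) - |S'_2| - |T'_2| - |V''_2|$. Since $T_2 \subseteq S_2 \cup T_2$ and since $S'_2 \uplus T'_2 \subseteq S_2 \cup T_2$ (with $S'_2 \cap T'_2 = \emptyset$ because $S'_2 \subseteq V_2 \backslash R_2$ while $T'_2 \subseteq R_2$), both bounds collapse to the uniform estimate
\[|N_B(u) \cap R'_2| \geq \delta(G) - |S_2 \cup T_2| - |V''_2| \qquad \text{for every } u \in R'_1 \uplus V''_1.\]

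Next I would apply the inclusion-exclusion principle: for any $u, v \in R'_1 \uplus V''_1$,
\[|N_B(u) \cap N_B(v) \cap R'_2| \geq 2\delta(G) - 2|S_2 \cup T_2| - 2|V''_2| - |R'_2|.\]
The key bookkeeping move is to substitute $|R'_2| = |V_2| - |S'_2| - |SR_2| - |B''_2| - |T_2| - |V''_2|$ from the decomposition (\ref{92}) and then use $|S'_2| + |T_2| \leq |S_2 \cup T_2|$ (valid as $S'_2 \cap T_2 = \emptyset$, $S'_2 \subseteq S_2$, $T_2 \subseteq S_2 \cup T_2$), yielding the reduced lower bound
\[2\delta(G) - |V_2| - |S_2 \cup T_2| - |V''_2| + |SR_2| + |B''_2|.\]

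To finish, I would plug in $\delta(G) > \tfrac{3}{4}(m+n-1)$, $|V_2| = m+n-1$, the bound $|S_2 \cup T_2| < \tfrac{m+n-5}{4}$ from (\ref{d11}), $|V''_2| < \tfrac{m+n-1}{4}$ from (\ref{d8}), and $|B''_2| \geq 1$ from the Case 1 hypothesis $B'_2 \backslash T_2 \neq \emptyset$, obtaining a strict lower bound $> \tfrac{m+n-1}{2} - \tfrac{m+n-3}{2} + 1 = 2 > 0$. The main obstacle I anticipate is precisely this bookkeeping: the naive upper bound $|R'_2| \leq m-1$ fails to give a positive estimate (by (\ref{d14}) one may have $m \geq 3n+2$, so $m-1$ far exceeds $\tfrac{m+n+3}{2}$), and hence one must retain the $|B''_2| \geq 1$ and $|SR_2| \geq 0$ slack coming from (\ref{92}) to force cancellation of the large $|S_2 \cup T_2|$ contribution.
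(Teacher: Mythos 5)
Your proposal is correct in substance and rests on the same engine as the paper's proof --- inclusion--exclusion applied to the blue neighbourhoods in $R'_2$ supplied by (\ref{d15}) and (\ref{d16}) --- but it packages the estimate differently. The paper runs two separate inclusion--exclusion bounds and splits into cases according to whether $|R'_2|<\frac{m+n+3}{2}$ or not, invoking (\ref{d17}) (i.e.\ $B''_2\neq\emptyset$) only in the second case; you instead substitute $|R'_2|$ exactly from the partition (\ref{92}) and obtain a single positive bound. This is a genuine streamlining: the case split disappears because the identity $|S_2\cup T_2|=|S'_2|+|SR_2|+|T_2|$ (valid since $T_2\subseteq R_2$, so $S_2\setminus T_2=S'_2\uplus SR_2$) makes the large $|S_2\cup T_2|$ contribution cancel in one pass.

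One local slip: the inequality you cite, $|S'_2|+|T_2|\leq|S_2\cup T_2|$, points the wrong way for the step it is meant to justify, and your ``reduced lower bound'' carries a spurious $+|SR_2|$. The correct bookkeeping, via the identity above (which gives $|S'_2|+|T_2|=|S_2\cup T_2|-|SR_2|$), yields
\[
|N_B(u)\cap N_B(v)\cap R'_2|\;\geq\;2\delta(G)-|V_2|-|S_2\cup T_2|-|V''_2|+|B''_2|,
\]
i.e.\ your expression without the $+|SR_2|$. Since you lower-bound $|SR_2|$ by $0$ in the final step anyway, your numerical estimate $>2>0$ is unaffected; indeed $2\delta(G)-|V_2|-|S_2\cup T_2|-|V''_2|>1$ already follows from (\ref{d8}) and (\ref{d11}), so even the term $|B''_2|\geq1$ is not strictly needed. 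With that one-line correction the argument is complete.
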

\begin{proof} For any $x\in R'_1$, $N_B(x)\subseteq T'_2\uplus R'_2\uplus V''_2$ and $N_R(x)\subseteq T_2$ by Claim \ref{d0}(ii), then
\[|N_B(x)\cap R'_2|\geq\delta(G)-|T_2\uplus V''_2|.\]
For any $x\in V''_1$, $N_B(x)\subseteq T'_2\uplus R'_2\uplus V''_2$ and $N_R(x)\subseteq S'_2\uplus V''_2$ by Claim \ref{d0}(iii), then
\[|N_B(x)\cap R'_2|\geq\delta(G)-|S'_2\uplus T'_2\uplus V''_2|.\]
For any pair of vertices $x, x'\in R'_1\uplus V''_1$, by the inclusion-exclusion principle, we have that
\begin{align}\label{201}
|N_B(x)\cap N_B(x')|&\geq|N_B(x)\cap R'_2|+|N_B(x')\cap R'_2|-|R'_2|
\geq2(\delta(G)-|S'_2\uplus T_2\uplus V''_2|)-|R'_2| \notag\\
&\geq2(\delta(G)-|S_2\cup T_2|-|V''_2|)-|R'_2|
\overset{(\ref{d8})}{\underset{(\ref{d11})}{>}}\frac{m+n+3}{2}-|R'_2|,
\end{align}
and
\begin{align}\label{202}
|N_B(x)\cap N_B(x')|&\geq|N_B(x)\cap R'_2|+|N_B(x')\cap R'_2|-|R'_2|
\geq2(\delta(G)-|S'_2\uplus T_2\uplus V''_2|)-|R'_2| \notag\\
&\overset{(\ref{d17})}{>}2(\delta(G)-|V_2\backslash R'_2|)-|R'_2| \notag\\
&=2\delta(G)-2|V_2|+|R'_2|>|R'_2|-\frac{m+n-1}{2}.
\end{align}

If $|R'_2|<\frac{m+n+3}{2}$, then the rightside of inequality (\ref{201}) is at least 1. If $|R'_2|\geq\frac{m+n+3}{2}$, then the rightside of inequality (\ref{202}) is at least 1. Thus we have that $R'_1\uplus V''_1$ is contained in some blue component of $G$, say $\mathcal{H}_1$.
\end{proof}

Let $J$ be a minimum vertex cover of $\mathcal{H}_1$. For each $i\in[2]$, let $J_i=J\cap V_i$. By Theorem \ref{EK},
\begin{equation}\label{d18}
|J_1|+|J_2|=|J|=\alpha'(\mathcal{H}_1)\leq n-1.
\end{equation}
By Claim \ref{JT4}, $R'_1\uplus V''_1\subseteq\mathcal{H}_1\cap V_1$. Now $(R'_1\uplus V''_1)\backslash J_1\neq\emptyset$, otherwise $|J_1|\geq|R'_1\uplus V''_1|\overset{(\ref{d13})}{\geq}n$, a contradiction to inequality (\ref{d18}).
Let $x\in(R'_1\uplus V''_1)\backslash J_1$. Since $J$ is a minimum vertex cover of $\mathcal{H}_1$, $N_B(x)\subseteq J_2$. By Claim \ref{d0}(ii)-(iii), $N_R(x)\subseteq S'_2\uplus T_2\uplus V''_2$. Then $|J_2\cup(T_2\uplus S'_2\uplus V''_2)|\geq\delta(G)$, and so
$|J_2|\geq\delta(G)-|T_2\uplus S'_2\uplus V''_2|
\geq\delta(G)-|T_2\cup S_2|-|V''_2|\overset{(\ref{d8})}{\underset{(\ref{d11})}{>}}
\frac{m+n+3}{4}\overset{(\ref{d14})}{\geq}n+\frac{5}{4}$, a contradiction to inequality (\ref{d18}).

{\bf Case 2.} For each $i\in[2]$, $B'_i\subseteq T_i$.

For each $i\in[2]$, $B_i\subseteq S_i\cup T_i$.
If $x\in S_i\backslash R_i$, by Claim \ref{d0}(iv), $N_G(x)\subseteq B_{3-i}\uplus V''_{3-i}$, then
$|B_{3-i}\uplus V''_{3-i}|\geq\delta(G)$, thus
\[|S_{3-i}\cup T_{3-i}|\geq|B_{3-i}|\geq\delta(G)-|V''_{3-i}|
\overset{(\ref{d8})}{>}\frac{m+n-1}{2}.\]
If $x\in(S_i\cap R_i)\backslash T_i$, by Claim \ref{d0}(v), $N_G(x)\subseteq B_{3-i}\cup T_{3-i}$, then
\[|S_{3-i}\cup T_{3-i}|=|B_{3-i}\cup T_{3-i}|\geq\delta(G)>\frac{3}{4}(m+n-1).\]
If $S_i\backslash T_i=(S_i\backslash R_i)\uplus((S_i\cap R_i)\backslash T_i)\neq\emptyset$ for each $i\in[2]$, then each $|S_i\cup T_i|>\frac{m+n-1}{2}$, and so $|S\cup T|=|S_1\cup T_1|+|S_2\cup T_2|\geq m+n$, a contradiction to inequality (\ref{d5}).
Thus either $S_1\subseteq T_1$ or $S_2\subseteq T_2$. Without loss of generality, assume that $S_1\subseteq T_1$. Combining with (\ref{JT1}), we have that
\begin{equation}\label{JT2}
B_1\subseteq T_1.
\end{equation}
Then $V_1=T_1\uplus R'_1\uplus V''_1$, and so
\begin{equation}\label{d22}
|R'_1\uplus V''_1|=|V_1|-|T_1|\overset{(\ref{d3})}{\geq}n.
\end{equation}

Now we split the remainder into two cases.

{\bf Subcase~2.1.} $S_1\subset T_1$ and $S_2\backslash T_2\neq\emptyset$.

Recall that $S'_2=S_2\backslash R_2$ and $SR_2=(S_2\cap R_2)\backslash T_2$, then $S'_2\uplus SR_2\neq\emptyset$.

\begin{claim}\label{106}
$S'_2\neq\emptyset$.
\end{claim}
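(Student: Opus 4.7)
I argue by contradiction: assume $S'_2=\emptyset$. Since Subcase~2.1 supplies $S_2\setminus T_2\neq\emptyset$ and the definitions give $S_2\setminus T_2=S'_2\uplus SR_2$, this forces $SR_2\neq\emptyset$. The overall strategy is to use a single vertex $x\in SR_2$ to push $|T_1|$ above $\tfrac{3}{4}(m+n-1)$, and then extract enough consequences from this bound to produce either a direct numerical contradiction (for small $m$) or a blue component strictly larger than $\mathcal{B}$ (for large $m$).

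The numerical squeeze runs as follows. Pick any $x\in SR_2$; by Claim \ref{d0}(v), $N_B(x)\subseteq B_1$ and $N_R(x)\subseteq T_1$, and since $B_1\subseteq T_1$ by Claim \ref{JT2}, $N_G(x)\subseteq T_1$, yielding $|T_1|\geq\delta(G)>\tfrac{3}{4}(m+n-1)$. Combining with $|T_1|+|T_2|\leq m-1$ from (\ref{d3}) gives $|T_2|<\tfrac{m-3n-1}{4}$. Next, $S'_2=\emptyset$ and $B'_2\subseteq T_2$ imply $B_2=ST_2\uplus SR_2\uplus B'_2\subseteq SR_2\uplus T_2$ disjointly, so $|SR_2|\geq|B_2|-|T_2|\geq n-|T_2|$; using $|SR_2|\leq|S_2|\leq n-1$ we conclude $|T_2|\geq 1$ and $|B'_2|\geq n-|S_2|\geq 1$. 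Now fix any $b\in B'_2$: because $b\in B_2\setminus S_2$ and $S$ is a minimum vertex cover of $\mathcal{B}$, every blue edge at $b$ is covered at its other endpoint, whence $N_B(b)\subseteq S_1$; because $\mathcal{B}$ is connected with at least $2n\geq 2$ vertices, $b$ has at least one blue edge, so $|S_1|\geq 1$. On the other hand $|S_2|\geq n-|T_2|$ together with $|S_1|+|S_2|\leq n-1$ yields $|S_1|\leq|T_2|-1$, forcing $|T_2|\geq 2$; this in turn forces $m\geq 3n+10$, so that for $m<3n+10$ we already have a contradiction.

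For the remaining range $m\geq 3n+10$, I would finish by producing a blue component $\mathcal{H}$ with $|\mathcal{H}|>|\mathcal{B}|$. Apply Claim \ref{JT1}(ii) to pick $y\in R'_1\uplus V''_1$; by Claim \ref{d0}(ii)--(iii), using $S'_2=\emptyset$ and $S_1\setminus R_1=\emptyset$ (the latter from $S_1\subseteq T_1\subseteq R_1$), $N_B(y)\subseteq V_2\setminus B_2=T'_2\uplus R'_2\uplus V''_2$. A pairwise inclusion--exclusion argument of the type used throughout the proofs of Lemmas \ref{2}--\ref{4} then shows that every two vertices of $R'_1\uplus V''_1$ share a common blue neighbour in $V_2\setminus B_2$, so $R'_1\uplus V''_1$ is contained in a single blue component $\mathcal{H}$ with $\mathcal{H}\cap V_2\subseteq V_2\setminus B_2$. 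A case analysis according to whether $V''_1$ or $V''_2$ is empty, combined with the bounds $|B_2|\leq|SR_2|+|T_2|<\tfrac{m-n-1}{2}$ and (when $V''_2\neq\emptyset$) $|B_1|<\tfrac{m+n-1}{4}$, then verifies $|\mathcal{H}|>|\mathcal{B}|$, contradicting the maximality of $\mathcal{B}$. The principal obstacle is this last cardinality comparison: the strict inequality $\delta(G)>\tfrac{3}{4}(m+n-1)$ must be propagated without loss through each inclusion--exclusion estimate, and the degenerate sub-cases $V''_1=\emptyset$ and $V''_2=\emptyset$ require separate book-keeping (via $V_1=T_1\uplus R'_1$ and $V_2=T_2\uplus SR_2\uplus R'_2$ respectively) to close the count.
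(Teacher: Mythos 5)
Your first paragraph is sound: for $x\in SR_2$, Claim \ref{d0}(v) together with $B_1\subseteq T_1$ gives $N_G(x)\subseteq T_1$, hence $|T_1|\geq\delta(G)>\frac{3}{4}(m+n-1)$ and $|T_2|<\frac{m-3n-1}{4}$ (the paper reaches the same two bounds, (\ref{d23})--(\ref{d25}), via $G[R'_1\uplus V''_1,SR_2]=\emptyset$ and Claim \ref{d0}(viii)), and your extra numerology correctly disposes of $m\leq 3n+9$. The gap is in the endgame for $m\geq 3n+10$. You do establish that $R'_1\uplus V''_1$ lies in a single blue component $\mathcal{H}$ disjoint from $\mathcal{B}$, but the intended contradiction $|\mathcal{H}|>|\mathcal{B}|$ is not justified and, in the subcase $V''_2=\emptyset$, does not follow from your estimates: there $|B_1|$ is not small -- one only knows $B_1\subseteq T_1$ with $|T_1|$ possibly as large as $m-1-|T_2|$, so $|\mathcal{B}|=|B_1|+|B_2|$ can be on the order of $m$, while your lower bounds give only $|\mathcal{H}\cap V_1|\geq|R'_1\uplus V''_1|\geq n$ (indeed $|R'_1\uplus V''_1|=|V_1|-|T_1|<\frac{m+n-1}{4}$) and $|\mathcal{H}\cap V_2|>\frac{m+3n-1}{2}$. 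For $m\geq 3n+10$ these do not sum to more than $|\mathcal{B}|$, and nothing in your sketch forces $\mathcal{H}$ to absorb a large part of $T'_1$. So the "case analysis" you defer to cannot close with a cardinality comparison against $\mathcal{B}$; the obstacle is not the propagation of strict inequalities but the absence of any upper bound on $|B_1|$.

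The paper closes this case with a different idea, which your plan is missing: it applies the K\"{o}nig--Egerv\'{a}ry theorem to the \emph{new} blue component. Since there is no blue connected $n$-matching, $\mathcal{H}$ has a vertex cover $K$ with $|K_1|+|K_2|\leq n-1$; by (\ref{d22}) there is $x\in(R'_1\uplus V''_1)\setminus K_1$, and for such $x$ one has $N_B(x)\subseteq K_2$ while $N_R(x)\subseteq T_2\uplus V''_2$ (Claim \ref{d0}(ii)--(iii) with $S'_2=\emptyset$). Hence $|K_2|\geq\delta(G)-|T_2|-|V''_2|>\frac{3}{4}(m+n-1)-\frac{m-3n-1}{4}-\frac{m+n-1}{4}=\frac{m+5n-1}{4}\geq\frac{3n}{2}$, contradicting $|K|\leq n-1$. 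This works uniformly in $m$ and makes no reference to $|\mathcal{B}|$. To repair your argument you would need to import this vertex-cover step (or an equivalent way of exploiting the absence of a blue connected $n$-matching inside $\mathcal{H}$); as written, the proposal does not prove the claim.
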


\begin{proof} Suppose that $S'_2=\emptyset$, then $SR_2\neq\emptyset$. For any $x\in SR_2$, by Claim \ref{d0}(v) and (\ref{JT2}), $N_G(x)\subseteq T_1$, then $N_G(x)\cap(V_1\backslash T_1)=N_G(x)\cap(R'_1\uplus V''_1)=\emptyset$. Thus $G[R'_1\uplus V''_1, SR_2]=\emptyset$. By Proposition \ref{pr1}, $|R'_1\uplus V''_1|<\frac{m+n-1}{4}$,
then $|T_1|=|V_1\backslash(R'_1\uplus V''_1)|>\frac{3}{4}(m+n-1)$. Now
\begin{equation}\label{d25}
|T_2|\overset{(\ref{d3})}{\leq}m-1-|T_1|<\frac{m-3n-1}{4}.
\end{equation}
Since $B'_2\subseteq T_2$ and $S'_2=\emptyset$,
$|R'_2|\overset{(\ref{92})}{=}|V_2\backslash V''_2|-|S_2\cup T_2|\overset{(\ref{d8})}{>}
\frac{3}{4}(m+n-1)-|S_2\cup T_2|\overset{(\ref{d4})}{\underset{(\ref{d25})}{>}}\frac{m+n-1}{2}$. By Claim \ref{d0}(ii), $G[R'_1\uplus V''_1, R'_2]\subseteq G_B$.
By Proposition \ref{pr2}(i), $R'_1\uplus V''_1$ is contained in some blue component of $G$, say $\mathcal{H}_2$. Then $R'_1\uplus V''_1\subseteq\mathcal{H}_2\cap V_1$.

Let $K$ be a minimum vertex cover of $\mathcal{H}_2$. For each $i\in[2]$, let $K_i=K\cap V_i$. By Theorem \ref{EK},
\begin{equation}\label{d28}
|K_1|+|K_2|=|K|=\alpha'(\mathcal{H}_2)\leq n-1.
\end{equation}
Now $(R'_1\uplus V''_1)\backslash K_1\neq\emptyset$, otherwise $|K_1|\geq|R'_1\uplus V''_1|\overset{(\ref{d22})}{\geq}n$, a contradiction to inequality (\ref{d28}). Let $x\in(R'_1\uplus V''_1)\backslash K_1$. By Claim \ref{d0}(ii)-(iii), $N_R(x)\subseteq T_2\uplus V''_2$ since $S'_2=\emptyset$. Since $K$ is a minimum vertex cover of $\mathcal{H}_2$, $N_B(x)\subseteq K_2$. Then $|K_2\cup(T_2\uplus V''_2)|\geq\delta(G)$, and so $|K_2|\geq\delta(G)-|T_2\uplus V''_2|
\overset{(\ref{d8})}{\underset{(\ref{d25})}{>}}\frac{m+5n-1}{4}\geq\frac{3}{2}n$ since $m\geq n+1$,
a contradiction to inequality (\ref{d28}).

This completes the proof of Claim \ref{106}.
\end{proof}

By Claim \ref{106}, $S'_2\neq\emptyset$. By Claim \ref{d0}(vii) and Proposition \ref{pr1}, $|T'_1\uplus R'_1|<\frac{m+n-1}{4}$. By (\ref{JT2}), $|B_1\uplus V''_1|=|V_1\backslash(T'_1\uplus R'_1)|>\frac{3}{4}(m+n-1)$, then
\begin{equation}\label{d29}
|T_1|\overset{(\ref{JT2})}{\geq}|B_1|>\frac{3}{4}(m+n-1)-|V''_1|\overset{(\ref{d8})}{>}\frac{m+n-1}{2}.
\end{equation}
Then
\begin{equation}\label{d30}
|T_2|\overset{(\ref{d3})}{\leq}m-1-|T_1|<\frac{m-n-1}{2}.
\end{equation}

\begin{claim}\label{JT8}
The following holds.

{\rm(i)} $V''_2=\emptyset$.

{\rm (ii)} $R'_1\uplus V''_1$ is contained in some blue component of $G$, say $\mathcal{H}_3$.
\end{claim}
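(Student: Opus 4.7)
The plan is to prove (i) by contradiction, exploiting the chain $S_1\subseteq T_1\subseteq R_1$ (from Claim \ref{JT2} together with the fact that every vertex cover of $\mathcal{R}$ lies inside $V(\mathcal{R})$), and then to leverage (i) to run a standard inclusion-exclusion argument for (ii).

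For (i), suppose $V''_2\neq\emptyset$ and pick any $x\in V''_2$. By Claim \ref{d0}(iii), $N_B(x)\subseteq T'_1\uplus R'_1\uplus V''_1$ and $N_R(x)\subseteq(S_1\backslash R_1)\uplus V''_1$. The first key observation is that $S_1\backslash R_1=\emptyset$: Claim \ref{JT2} gives $S_1\subseteq T_1$, and $T_1\subseteq R_1$ by definition of a vertex cover, so $S_1\subseteq R_1$. The second is $V_1\setminus B_1=T'_1\uplus R'_1\uplus V''_1$, which follows from $B_1\subseteq T_1$ (again via Claim \ref{JT2}). Together these give $N_G(x)\subseteq V_1\setminus B_1$, and the minimum-degree hypothesis forces $|B_1|\leq|V_1|-\delta(G)<(m+n-1)/4$, contradicting the lower bound $|B_1|>(m+n-1)/2$ from inequality (\ref{d29}).

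For (ii), with $V''_2=\emptyset$ in hand, Claim \ref{d0}(ii) and (iii) both simplify to $N_B(x)\subseteq T'_2\uplus R'_2$ for every $x\in R'_1\uplus V''_1$, and therefore
\[|N_B(x)\cap R'_2|\geq\delta(G)-|T'_2|.\]
The decomposition $V_2=B_2\uplus R'_2\uplus T'_2$ (from $V'_2=R'_2\uplus T'_2\uplus V''_2$ and $V''_2=\emptyset$) yields $|R'_2|=|V_2|-|B_2|-|T'_2|$. Then, for any pair $x,y\in R'_1\uplus V''_1$, inclusion-exclusion gives
\[|N_B(x)\cap N_B(y)|\geq 2(\delta(G)-|T'_2|)-|R'_2|=2\delta(G)-|V_2|+|B_2|-|T'_2|.\]
Inserting $|B_2|\geq n$ from the hypothesis and $|T'_2|\leq|T_2|<(m-n-1)/2$ from (\ref{d30}), the right-hand side exceeds $(m+n-1)/2+n-(m-n-1)/2=2n>0$. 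Hence every two vertices of $R'_1\uplus V''_1$ share a common blue neighbor in $R'_2$, placing them in a single blue component, which we call $\mathcal{H}_2$.

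The main obstacle is the structural collapse in (i): once one recognizes that Claim \ref{JT2} forces $S_1\subseteq R_1$, the red neighborhood of any vertex in $V''_2$ is confined to $V''_1$, and together with $V_1=T_1\uplus R'_1\uplus V''_1$ this pushes $|B_1|$ below the threshold that (\ref{d29}) forbids. Part (ii) is then a routine two-term inclusion-exclusion over blue neighborhoods in $R'_2$, the only subtlety being to use the sharpened decomposition of $V_2$ coming from (i) rather than the weaker estimates (\ref{d15}), (\ref{d16}).
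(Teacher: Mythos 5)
Your part (i) is exactly the paper's argument: from $S_1\subseteq T_1\subseteq R_1$ and $B_1\subseteq T_1$ (Claim \ref{JT2} plus $T\subseteq V(\mathcal{R})$) one gets $N_G(x)\subseteq T'_1\uplus R'_1\uplus V''_1=V_1\setminus B_1$ for every $x\in V''_2$, so $G[B_1,V''_2]=\emptyset$ and the minimum degree forces $|B_1|<\frac{m+n-1}{4}$, contradicting (\ref{d29}).

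Part (ii) follows the same route as the paper (a common blue neighbour in $R'_2$ for every pair of vertices of $R'_1\uplus V''_1$), but your key intermediate bound is mis-derived. From ``$N_B(x)\subseteq T'_2\uplus R'_2$'' you cannot conclude $|N_B(x)\cap R'_2|\geq\delta(G)-|T'_2|$: the quantity $\delta(G)$ lower-bounds $|N_G(x)|=|N_B(x)|+|N_R(x)|$, not $|N_B(x)|$, so the red neighbourhood must also be accounted for. For $x\in R'_1$, Claim \ref{d0}(ii) gives $N_R(x)\subseteq T_2$, which yields only $|N_B(x)\cap R'_2|\geq\delta(G)-|T_2|$; the gap $|T_2|-|T'_2|=|T_2\cap B_2|\supseteq B'_2$ is nonempty and need not be small. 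For $x\in V''_1$, Claim \ref{d0}(iii) gives $N_R(x)\subseteq S'_2$ (after $V''_2=\emptyset$), so the correct bound is $\delta(G)-|T'_2|-|S'_2|$, and by Claim \ref{106} the omitted set $S'_2$ is nonempty. This is precisely why the paper works with the weaker estimates (\ref{d15})--(\ref{d16}), bounding both by $\delta(G)-|T_2\cup S'_2|$ and then using $|R'_2|=|V_2|-|S_2\cup T_2|\geq m-|T_2|>\frac{m+n+1}{2}$ (inequality (\ref{d31})) to get $2(\delta(G)-|T_2\cup S'_2|)-|R'_2|\geq 2\delta(G)-2|V_2|+|R'_2|>|R'_2|-\frac{m+n-1}{2}>1$. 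Your slip is quantitatively harmless --- substituting the corrected bounds into your own computation still leaves a positive margin, since $|S'_2|\leq|S_2|\leq n-1$ and $|T_2|<\frac{m-n-1}{2}$ --- but the inequality $|N_B(x)\cap R'_2|\geq\delta(G)-|T'_2|$ as stated is false in general and must be replaced before the argument is sound.
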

\begin{proof}(i) For any $x\in V''_2$, by Claim \ref{d0}(iii) and (\ref{JT2}), $N_G(x)\subseteq T'_1\uplus R'_1\uplus V''_1=V_1\backslash B_1$. Thus $G[B_1, V''_2]=\emptyset$. If $V''_2\neq\emptyset$, by Proposition \ref{pr1}, $|B_1|<\frac{m+n-1}{4}$, a contradiction to inequality (\ref{d29}). Thus $V''_2=\emptyset$.

(ii) By (i), $V_2\overset{(\ref{92})}{=}S'_2\uplus SR_2\uplus T_2\uplus R'_2$ since $B'_2\subseteq T_2$. Then $|R'_2|=|V_2\backslash(S_2\cup T_2)|\overset{(\ref{d4})}{\geq}m-|T_2|
\overset{(\ref{d30})}{>}\frac{m+n+1}{2}$.
By Claim \ref{d0}(ii), $G[R'_1\uplus V''_1, R'_2]\subseteq G_B$. By Proposition \ref{pr2}(i), $R'_1\uplus V''_1$ is contained in some blue component of $G$, say $\mathcal{H}_3$.
\end{proof}

Let $D$ be a minimum vertex cover of $\mathcal{H}_3$. For each $i\in[2]$, let $D_i=D\cap V_i$. By Theorem \ref{EK},
\begin{equation}\label{d32}
|D_1|+|D_2|=|D|=\alpha'(\mathcal{H}_3)\leq n-1.
\end{equation}
By Claim \ref{JT8}(ii), $R'_1\uplus V''_1\subseteq\mathcal{H}_3\cap V_1$. Now $(R'_1\uplus V''_1)\backslash D_1\neq\emptyset$, otherwise $|D_1|\geq|R'_1\uplus V''_1|\overset{(\ref{d22})}{\geq}n$, a contradiction to inequality (\ref{d32}). Let $x\in(R'_1\uplus V''_1)\backslash D_1$. Since $D$ is a minimum vertex cover of $\mathcal{H}_3$, $N_B(x)\subseteq D_2$. By Claim \ref{d0}(ii)-(iii) and Claim \ref{JT8}(i), $N_R(x)\subseteq S'_2\uplus T_2$. Then $|D_2\cup(T_2\uplus S'_2)|\geq\delta(G)$, and so
$|D_2|\geq\delta(G)-|T_2\uplus S'_2|\overset{(\ref{d30})}{>}\frac{m+5n-1}{4}-|S'_2|$.
Combining with inequality (\ref{d32}), then
\[|S'_2|>\frac{m+n+3}{4}.\]
Combining with inequality (\ref{d4}), we have that $m\leq 3n-8$.
Now $T'_1\uplus R'_1=\emptyset$, otherwise by Claim \ref{d0}(vii) and Proposition \ref{pr1}, $|S'_2|<\frac{m+n-1}{4}$, a contradiction. Thus $|V''_1|\overset{(\ref{d22})}{\geq}n$. Combining with inequality (\ref{d8}), we have that $m\geq 3n+2$, a contradiction.

{\bf Subcase~2.2.} For each $i\in[2]$, $S_i\subset T_i$.

For each $i\in[2]$, we have that $B_i\subseteq T_i$, then
\begin{equation}\label{d35}
V_i=T_i\uplus R'_i\uplus V''_i=B_i\uplus T'_i\uplus R'_i\uplus V''_i.
\end{equation}
Without loss of generality, by inequality (\ref{d3}), we can assume that
\begin{equation}\label{d37}
|T_1|\leq\frac{m-1}{2}.
\end{equation}

\begin{claim}\label{11}
The following holds.

{\rm (i)} $R'_2$ is contained in some blue component of $G$, say $\mathcal{H}$.

{\rm (ii)} $|R'_2|\leq n-1$.

{\rm (iii)} For each $i\in[2]$, $V''_i\neq\emptyset$.
\end{claim}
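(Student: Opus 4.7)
The plan is to prove the three parts of Claim \ref{11} in order, with the key tools being inclusion-exclusion on blue neighborhoods and the K\"{o}nig-Egerv\'{a}ry theorem applied to blue components.

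For (i), I would apply inclusion-exclusion to the lower bound (\ref{d40}) for any pair $y, y' \in R'_2$. Writing $|R'_1 \uplus V''_1| = |V_1| - |T_1|$, this yields
\[
|N_B(y) \cap N_B(y')| \geq 2(\delta(G) - |T_1|) - (|V_1| - |T_1|) = 2\delta(G) - |V_1| - |T_1|,
\]
which after substituting $\delta(G) > \frac{3}{4}(m+n-1)$ and the WLOG bound (\ref{d37}) strictly exceeds $\frac{n}{2} \geq 1$. Thus every pair in $R'_2$ shares a common blue neighbor, so $R'_2$ is contained in a single blue component $\mathcal{H}$.

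For (ii), I would argue by contradiction: assume $|R'_2| \geq n$. Since $\mathcal{H}$ is connected and the coloring admits no blue connected $n$-matching, $\alpha'(\mathcal{H}) \leq n-1$; by Theorem \ref{EK}, $\mathcal{H}$ has a vertex cover $D_1 \uplus D_2$ of size at most $n-1$. Any $y \in R'_2 \setminus D_2$ would have all its blue neighbors, which all lie in $\mathcal{H}$, inside $D_1$; but $|N_B(y)| \geq \delta(G) - |T_1| > \frac{m+3n-1}{4} \geq n$ (using $m \geq n+1$) would contradict $|D_1| \leq n-1$. Hence $R'_2 \subseteq D_2$, and $|R'_2| \leq n-1$.

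For (iii), $V''_2 \neq \emptyset$ follows immediately from (\ref{d35}) together with (ii). To establish $V''_1 \neq \emptyset$, I would argue by contradiction: suppose $V''_1 = \emptyset$ and pick $y \in V''_2$ (nonempty by what was just shown). In Subcase 2.2 we have $S_1 \subseteq T_1 \subseteq R_1$, hence $S_1 \setminus R_1 = \emptyset$, so Claim \ref{d0}(iii) forces $N_R(y) \subseteq (S_1 \setminus R_1) \uplus V''_1 = \emptyset$, giving $|N_B(y)| = d(y) \geq \delta(G)$. Letting $\mathcal{H}^*$ be the blue component of $y$ and $D^* = D^*_1 \uplus D^*_2$ a minimum vertex cover of $\mathcal{H}^*$ (of size $\leq n-1$ by Theorem \ref{EK}), the inequality $\delta(G) > n-1$ forces $y \in D^*_2$, so $|D^*_1| \leq n-2$. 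Then each of the at least $\delta(G) - (n-2)$ blue neighbors $z$ of $y$ outside $D^*_1$ must have blue degree $\leq |D^*_2| \leq n-1$. Splitting according as some such $z$ lies in $R'_1$ (forcing $|T_2| > \delta(G) - (n-1)$ and, combined with (\ref{d3}) and (\ref{d37}), contradicting the range of $m$) or all such $z$ lie in $T'_1$ (forcing $|T'_1| > (m-1)/2$, contradicting $T'_1 \subseteq T_1$ and (\ref{d37})) then yields the desired contradiction. The main obstacle is precisely this step: since the WLOG bound (\ref{d37}) controls $|T_1|$ but not $|T_2|$, one cannot simply dualize the K\"{o}nig-Egerv\'{a}ry argument of (ii) by interchanging the two sides; the resolution is to exploit that $V''_1 = \emptyset$ strips all red edges from every $V''_2$-vertex, creating a very high blue degree whose presence in any small vertex cover of its blue component propagates contradictory constraints on $|T_1|$ and $|T_2|$.
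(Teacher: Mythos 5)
Your parts (i) and (ii), and the deduction $V''_2\neq\emptyset$ from (ii) and (\ref{d35}), are correct and essentially identical to the paper's argument (the paper phrases (ii) as $|U_1\cup T_1|\geq\delta(G)$ rather than bounding $|N_B(y)|$ directly, but it is the same computation). The gap is in your proof that $V''_1\neq\emptyset$, specifically in the case ``some such $z$ lies in $R'_1$''. There, from $z\in R'_1$ with $N_B(z)\subseteq D^*_2$ and $N_R(z)\subseteq T_2$ (Claim \ref{d0}(ii)) you obtain $|T_2|\geq\delta(G)-(n-1)>\frac{3m-n+1}{4}$. This conflicts with nothing available at this point: combined with (\ref{d3}) it only yields $|T_1|<\frac{m+n-5}{4}$, which is perfectly consistent with (\ref{d37}) and with $m>n$ (a contradiction with $|T_2|\leq m-1$ would require $m+n\leq5$). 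Worse, the bound $|T_2|>\frac{3m-n+1}{4}$ already follows unconditionally from part (ii) together with (\ref{d36}), since $|T_2|=|R_2|-|R'_2|>\frac{3}{4}(m+n-1)-(n-1)$; and the paper later shows $|T_2|>\frac{m+n-1}{2}$ in the surviving configuration. So the conclusion of this case is a true statement about the actual structure, not a contradiction. Your other case ($N_B(y)\setminus D^*_1\subseteq T'_1$, forcing $|T_1|\geq|T'_1|>\frac{3m-n+5}{4}>\frac{m-1}{2}$) is fine, but the $R'_1$ case is exactly where the real configuration lives, so the argument as written does not close.

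The paper's route around this is to keep the cover-escaping vertex on the $V_2$ side, where its red neighbourhood is controlled by the \emph{small} set $T_1$ rather than by $T_2$. Assuming $V''_1=\emptyset$ (so $V_1=T_1\uplus R'_1$), one first shows that the whole of $R'_2\uplus V''_2$ lies in the single blue component $\mathcal{H}$ of part (i): by (\ref{d39}) and (\ref{d40}), any two of its vertices have at least $2(\delta(G)-|T_1|)-|R'_1|=2\delta(G)-|V_1|-|T_1|>\frac{n}{2}$ common blue neighbours in $R'_1$. Since $|R'_2\uplus V''_2|\geq n$ by (\ref{d35}) while the cover $U$ of $\mathcal{H}$ satisfies $|U|\leq n-1$ by Theorem \ref{EK}, some $x\in(R'_2\uplus V''_2)\setminus U_2$ exists; for this $x$ one has $N_R(x)\subseteq T_1$ and $N_B(x)\subseteq U_1$, whence $|U_1|\geq\delta(G)-|T_1|>\frac{m+3n-1}{4}\geq n$, contradicting $|U|\leq n-1$. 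If you wish to keep your single-vertex setup starting from $y\in V''_2$, you would need an argument of this flavour to dispose of the case in which $y$ has a blue neighbour in $R'_1$ outside the cover.
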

\begin{proof}(i) By Claim \ref{d0}(ii), $G[R'_1\uplus V''_1, R'_2]\subseteq G_B$.
Note that $|R'_1\uplus V''_1|\overset{(\ref{d35})}{=}|V_1\backslash T_1|
\overset{(\ref{d37})}{\geq}\frac{m-1}{2}+n$. By Proposition \ref{pr2}(i), $R'_2$ is contained in some blue component of $G$, say $\mathcal{H}$.

Let $U$ be a minimum vertex cover of $\mathcal{H}$. For each $i\in[2]$, let $U_i=U\cap V_i$. By Theorem \ref{EK},
\begin{equation}\label{d41}
|U_1|+|U_2|=|U|=\alpha'(\mathcal{H})\leq n-1.
\end{equation}

(ii) By (i), $R'_2\subseteq\mathcal{H}\cap V_2$. Suppose that $|R'_2|\geq n$, then $R'_2\backslash U_2\neq\emptyset$, otherwise $|U_2|\geq|R'_2|\geq n$, a contradiction to inequality (\ref{d41}). Let $x\in R'_2\backslash U_2$. Since $U$ is a minimum vertex cover of $\mathcal{H}$, $N_B(x)\subseteq U_1$. By Claim \ref{d0}(ii), $N_R(x)\subseteq T_1$. Then $|U_1\cup T_1|\geq\delta(G)$, and so
$|U_1|\geq\delta(G)-|T_1|\overset{(\ref{d37})}{>}\frac{m+3n-1}{4}\geq n$ since $m\geq n+1$,
a contradiction to inequality (\ref{d41}). Thus $|R'_2|\leq n-1$.

(iii) Note that $|R'_2\uplus V''_2|\overset{(\ref{d35})}{=}|V_2\backslash T_2|\overset{(\ref{d3})}{\geq}n$. Combining with (ii), $V''_2\neq\emptyset$.

Suppose that $V''_1=\emptyset$. Now $V_1\overset{(\ref{d35})}{=}R_1$, then $|R'_1|=|R_1\backslash T_1|=|V_1\backslash T_1|\overset{(\ref{d37})}{\geq}\frac{m-1}{2}+n$. By Claim \ref{d0}(ii),
$G[R'_1, R'_2\uplus V''_2]\subseteq G_B$. Combining (i) and Proposition \ref{pr2}(i), the blue component $\mathcal{H}$ contains $R'_2\uplus V''_2$. Now $(R'_2\uplus V''_2)\backslash U_2\neq\emptyset$, otherwise $|U_2|\geq|R'_2\uplus V''_2|\overset{(\ref{d35})}{=}|V_2\backslash T_2|\overset{(\ref{d3})}{\geq}n$, contradicting to inequality (\ref{d41}). Let $x\in(R'_2\cup V''_2)\backslash U_2$. By Claim \ref{d0}(ii)-(iii), $N_R(x)\subseteq T_1$ since $V''_1=\emptyset$ and $B_1\subseteq T_1$. Since $U$ is a minimum vertex of $\mathcal{H}$, $N_B(x)\subseteq U_1$. Then $|U_1\cup T_1|\geq\delta(G)$, and so $|U_1|\geq\delta(G)-|T_1|\overset{(\ref{d37})}{>}\frac{m+3n-1}{4}\geq n$ since $m\geq n+1$, a contradiction to inequality (\ref{d41}). Thus $V''_1\neq\emptyset$.
\end{proof}

Let $i\in[2]$. Since $B_i\subseteq R_i$, $G[B_i, V''_{3-i}]=\emptyset$ by Claim \ref{d0}(vi). By Claim \ref{11}(iii) and Proposition \ref{pr1}, we have that
\begin{equation}\label{d42}
|B_i|<\frac{m+n-1}{4}.
\end{equation}
Combining with inequality (\ref{d1}), we have that $m\geq3n+2$.
Now $|T_2|>\frac{m+n-1}{2}$, otherwise $|R'_2|\overset{(\ref{d35})}{=}|V_2\backslash V''_2|-|T_2|
\overset{(\ref{d8})}{>}\frac{3}{4}(m+n-1)-|T_2|>\frac{m+n-1}{4}\geq n$ since $m\geq3n+2$, a contradiction to Claim \ref{11}(ii). Then
\begin{equation}\label{d44}
|T_1|\overset{(\ref{d3})}{\leq}m-1-|T_2|<\frac{m-n-1}{2}.
\end{equation}

\begin{claim}\label{JT10}  For each $i\in[2]$, the following holds.

{\rm (i)} $V''_i$ is contained in some blue component of $G$, say $\mathcal{F}_i$.

{\rm (ii)} $|\mathcal{F}_i\cap V_{3-i}|>|\mathcal{B}|$.

{\rm (iii)} $|V''_i|\leq n-1$.
\end{claim}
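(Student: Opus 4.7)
My plan is to handle (i)--(iii) in order, leveraging the structural constraints of Subcase~2.2 (notably $B_{3-i}\subseteq T_{3-i}$, hence $T'_{3-i}=T_{3-i}\setminus B_{3-i}$) together with (\ref{d39}) and the small-$V''$ bound (\ref{d8}). A key identity I would exploit repeatedly is
\[
|T'_{3-i}\uplus R'_{3-i}|=|V_{3-i}|-|V''_{3-i}|-|B_{3-i}|=(m+n-1)-|V''_{3-i}|-|B_{3-i}|,
\]
which follows from $V_{3-i}=T_{3-i}\uplus R'_{3-i}\uplus V''_{3-i}$ and $|T'_{3-i}|=|T_{3-i}|-|B_{3-i}|$.

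For (i), for any two $x,x'\in V''_i$ I would apply inclusion-exclusion to $N_B(x)\cap(T'_{3-i}\uplus R'_{3-i})$ and $N_B(x')\cap(T'_{3-i}\uplus R'_{3-i})$, using (\ref{d39}) twice and the identity above; this gives $|N_B(x)\cap N_B(x')\cap(T'_{3-i}\uplus R'_{3-i})|\geq 2\delta(G)-(m+n-1)-|V''_{3-i}|+|B_{3-i}|$, which exceeds $(m+n-1)/4>0$ by $\delta(G)>3(m+n-1)/4$ and (\ref{d8}). So every two vertices of $V''_i$ share a common blue neighbor, placing all of $V''_i$ in a single blue component $\mathcal{F}_i$. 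For (ii), since $V''_i\neq\emptyset$ by Claim~\ref{11}(iii), pick any $x\in V''_i$; its blue neighbors lie in $\mathcal{F}_i\cap V_{3-i}$, so (\ref{d39}) together with (\ref{d8}) gives $|\mathcal{F}_i\cap V_{3-i}|\geq\delta(G)-|V''_{3-i}|>(m+n-1)/2>|\mathcal{B}|$ by (\ref{d42}).

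Part (iii) is then a maximality argument against $\mathcal{B}$. Suppose for contradiction that $|V''_i|\geq n$. Because $V''_i\cap B_i=\emptyset$ and $V''_i\subseteq V(\mathcal{F}_i)$, I would first note that $\mathcal{F}_i\neq\mathcal{B}$. Since $|\mathcal{B}|\geq 2n$ and $|\mathcal{F}_i\cap V_{3-i}|>|\mathcal{B}|$ by (ii), one has $|\mathcal{F}_i\cap V_{3-i}|\geq n$, while $|\mathcal{F}_i\cap V_i|\geq|V''_i|\geq n$, so $\mathcal{F}_i$ meets the ``at least $n$ vertices on each side'' condition. Finally,
\[
|\mathcal{F}_i|\geq|\mathcal{F}_i\cap V_i|+|\mathcal{F}_i\cap V_{3-i}|\geq n+|\mathcal{F}_i\cap V_{3-i}|>n+|\mathcal{B}|>|\mathcal{B}|,
\]
contradicting the choice of $\mathcal{B}$ as a largest blue component with at least $n$ vertices in each part. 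The only slightly delicate point is verifying $\mathcal{F}_i\neq\mathcal{B}$, but this is immediate from $V''_i\neq\emptyset$ and $V''_i\cap B_i=\emptyset$. I do not anticipate any substantive obstacle here, as the strong minimum-degree hypothesis and the small-$V''$ bound (\ref{d8}) make all the inclusion-exclusion estimates comfortable.
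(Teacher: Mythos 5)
Your proposal is correct and follows essentially the same route as the paper: the same inclusion--exclusion via (\ref{d39}) to place $V''_i$ in a blue component $\mathcal{F}_i$, the same bound $|\mathcal{F}_i\cap V_{3-i}|\geq\delta(G)-|V''_{3-i}|>\frac{m+n-1}{2}>|\mathcal{B}|$ using (\ref{d8}) and (\ref{d42}), and the same maximality contradiction for (iii). The only differences are cosmetic (you expand $|T'_{3-i}\uplus R'_{3-i}|$ via $|B_{3-i}|$ where the paper uses (\ref{d2}), and you spell out $\mathcal{F}_i\neq\mathcal{B}$ explicitly).
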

\begin{proof} (i) Note that
$|R'_{3-i}\uplus T'_{3-i}|\overset{(\ref{d35})}{=}|V_{3-i}\backslash(B_{3-i}\uplus V''_{3-i})|
\overset{(\ref{d8})}{\underset{(\ref{d42})}{>}}\frac{m+n-1}{2}$. By Claim \ref{d0}(iii), $G[V''_i, R'_{3-i}\uplus T'_{3-i}]\subseteq G_B$. Combining with Proposition \ref{pr2}(i), $V''_i$ is contained in some blue component of $G$, say $\mathcal{F}_i$.

(ii) By (i), $V''_i\subseteq\mathcal{F}_i\cap V_i$. Let $x\in V''_i$, by Claim \ref{d0}(iii), $N_R(x)\subseteq V''_{3-i}$ since $B_{3-i}\subseteq T_{3-i}$ and $N_B(x)\subseteq T'_{3-i}\uplus R'_{3-i}\uplus V''_{3-i}$, then
$|\mathcal{F}_i\cap V_{3-i}|\geq|N_B(x)\cap(T'_{3-i}\uplus R'_{3-i})|\geq\delta(G)-|V''_{3-i}|
\overset{(\ref{d8})}{>}\frac{m+n-1}{2}\overset{(\ref{d42})}{>}|B_1|+|B_2|=|\mathcal{B}|.$

(iii) If $|V''_i|\geq n$ for some $i\in[2]$, then by (i)-(ii), $|\mathcal{F}_i\cap V_i|\geq |V''_i|\geq n$ and $|\mathcal{F}_i\cap V_{3-i}|>|\mathcal{B}|$, contradicting to the maximality of $\mathcal{B}$. Thus for each $i\in[2]$, $|V''_i|\leq n-1$.
\end{proof}

By Claim \ref{JT10}(iii), $|R'_2|\overset{(\ref{d35})}{=}|V_2\backslash(T_2\uplus V''_2)|\overset{(\ref{d3})}{\geq}n-|V''_2|\geq1$. Let $x\in R'_2$. By Claim \ref{d0}(ii), $N_R(x)\subseteq T_1$ and $N_B(x)\subseteq T'_1\uplus R'_1\uplus V''_1$. Then by Claim \ref{11}(i), $|\mathcal{H}\cap V_1|\geq|N_B(x)|\geq\delta(G)-|T_1|\overset{(\ref{d44})}{>}\frac{m+5n-1}{4}\geq\frac{3n}{2}$ since $m\geq n+1$. Suppose that $G_B[V''_1, R'_2]\neq\emptyset$. By Claim \ref{11}(i) and Claim \ref{JT10}(i), $\mathcal{H}=\mathcal{F}_1$. By Claim \ref{JT10}(ii), $|\mathcal{H}\cap V_2|=|\mathcal{F}_1\cap V_2|>|\mathcal{B}|$, a contradiction to the maximality of $\mathcal{B}$.

Suppose that $G_B[V''_1, R'_2]=\emptyset$, then $G[V''_1, R'_2]=\emptyset$ by Claim \ref{d0}(ii). Since $B_2\subseteq T_2$, $G[V''_1, B_2]=\emptyset$ by Claim \ref{d0}(vi). Thus $G[V''_1, B_2\uplus R'_2]=\emptyset$. Combining Claim \ref{11}(iii) and Proposition \ref{pr1}, $|B_2\uplus R'_2|<\frac{m+n-1}{4}$. By Claim \ref{JT10}(iii),
$|T'_2|\overset{(\ref{d35})}{=}|V_2\backslash V''_2|-|B_2\uplus R'_2|\geq m-|B_2\uplus R'_2|>\frac{3m-n+1}{4}$. Since $B_2\subseteq T_2$,
$|T_2|=|B_2|+|T'_2|\overset{(\ref{d1})}{\geq}n+|T'_2|>\frac{3m+3n+1}{4}$,
and so $|T_1|\overset{(\ref{d3})}{\leq}m-1-|T_2|<\frac{m-3n-5}{4}$. Now $|R'_1|\overset{(\ref{d35})}{=}|V_1\backslash(T_1\uplus V''_1)|
\overset{(\ref{d8})}{>}\frac{3}{4}(m+n-1)-|T_1|>\frac{m+3n+1}{2}$.
By Claim \ref{d0}(ii), $G[R'_1, R'_2\uplus V''_2]\subseteq G_B$.
Combining Proposition \ref{pr2}(i), Claim \ref{11}(i) and Claim \ref{JT10}(i), $\mathcal{H}=\mathcal{F}_2$. By Claim \ref{JT10}(ii),
$|\mathcal{H}\cap V_1|=|\mathcal{F}_2\cap V_1|>|\mathcal{B}|$ and $|\mathcal{H}\cap V_2|\geq|R'_2\uplus V''_2|\overset{(\ref{d35})}{=}|V_1\backslash T_2|\overset{(\ref{d3})}{\geq}n$, a contradiction to the maximality of $\mathcal{B}$.\hfill$\Box$

\vspace{8pt}
\noindent{\bf Proof of Theorem \ref{Sta}.} Set $n_0=1+\frac{1}{\gamma}$, then we have that $3n>m>n\geq1+\frac{1}{\gamma}$. Suppose that for some red-blue-edge-coloring of $G$ which is not $\gamma$-missing, there exists neither a red connected matching of size $(1+\gamma)m$, nor a blue connected matching of size $(1+\gamma)n$. If this edge coloring is a $\gamma$-coloring, then we are done. Thus we assume that this edge coloring is not a $\gamma$-coloring.
By Lemma \ref{3}, we can assume that $\mathcal{B}$ is a largest blue component such that $|\mathcal{B}\cap V_i|\geq(1+\gamma)n$ for each $i\in[2]$. Let $S$ be a minimum vertex cover of $\mathcal{B}$. By Lemma \ref{4}, $\mathcal{B}\backslash S$ is contained in some red component of $G$, say $\mathcal{R}$. Let $T$ be a minimum vertex cover of $\mathcal{R}$. For each $i\in[2]$, let $B_i=\mathcal{B}\cap V_i$, $S_i=S\cap V_i$, $B'_i=B_i\backslash S_i$ and $V'_i=V_i\backslash B_i$. For each $i\in[2]$, let $R_i=\mathcal{R}\cap V_i$, $T_i=T\cap V_i$, $T'_i=T_i\backslash B_i$, $R'_i=R_i\backslash(B_i\cup T_i)$, and $V''_i=V_i\backslash(B_i\cup R_i)$. Then for each $i\in[2]$, $B'_i\subseteq B_i\cap R_i$ and $V'_i=R'_i\uplus T'_i\uplus V''_i$. Let $N:=m+n-1$, then $\delta(G)>(\frac{3}{4}+\gamma)(m+n-1)=(\frac{3}{4}+\gamma)N$. Since $3n>m>n\geq1+\frac{1}{\gamma}$,
\begin{equation}\label{ssa1}
\delta(G)>(\frac{3}{4}+\gamma)N>(1+\gamma)m.
\end{equation}

\begin{claim}\label{s0} For each $i\in[2]$, the following holds.

{\rm (i)} If $x\in B'_i\backslash T_i$, then $N_B(x)\subseteq S_{3-i}$ and $N_R(x)\subseteq T_{3-i}$.

{\rm (ii)} If $x\in R'_i$, then $N_B(x)\subseteq T'_{3-i}\uplus R'_{3-i}\uplus V''_{3-i}$ and $N_R(x)\subseteq T_{3-i}$.

{\rm (iii)} If $x\in V''_i$, then $N_B(x)\subseteq T'_{3-i}\uplus R'_{3-i}\uplus V''_{3-i}$ and $N_R(x)\subseteq(S_{3-i}\backslash R_{3-i})\uplus V''_{3-i}$.

{\rm (iv)} If $x\in S_i\backslash R_i$, then $N_B(x)\subseteq B_{3-i}$ and $N_R(x)\subseteq(S_{3-i}\backslash R_{3-i})\uplus V''_{3-i}$.

{\rm (v)} If $x\in (S_i\cap R_i)\backslash T_i$, then $N_B(x)\subseteq B_{3-i}$ and $N_R(x)\subseteq T_{3-i}$.

{\rm (vi)} $G[V''_i, B_{3-i}\cap R_{3-i}]=\emptyset$.
\end{claim}
\begin{proof} (i) Let $x\in B'_i\backslash T_i\subseteq(B_i\cap R_i)\backslash T_i$. Since $T$ is a minimum vertex cover of $\mathcal{R}$, $N_R(x)\subseteq T_{3-i}$. Since $S$ is a minimum vertex cover of $\mathcal{B}$, $N_B(x)\subseteq S_{3-i}$.

(ii) Let $x\in R'_i=R_i\backslash(B_i\cup T_i)$. If $N_B(x)\cap B_{3-i}\neq\emptyset$, then since $\mathcal{B}$ is a largest blue component, $x\in B_i$, a contradiction. Thus $N_B(x)\subseteq V_{3-i}\backslash B_{3-i}=T'_{3-i}\uplus R'_{3-i}\uplus V''_{3-i}$. Since $T$ is a minimum vertex cover of $\mathcal{R}$, $N_R(x)\subseteq T_{3-i}$.

(iii) Let $x\in V''_i=V_i\backslash(B_i\cup R_i)$. If $N_B(x)\cap B_{3-i}\neq\emptyset$, then since $\mathcal{B}$ is a largest blue component, $x\in B_i$, a contradiction. Thus $N_B(x)\subseteq V_{3-i}\backslash B_{3-i}=T'_{3-i}\uplus R'_{3-i}\uplus V''_{3-i}$. If $N_R(x)\cap R_{3-i}\neq\emptyset$, then since $\mathcal{R}$ is a largest red component, $x\in R_i$, a contradiction. Thus $N_R(x)\subseteq V_{3-i}\backslash R_{3-i}=(S_{3-i}\backslash R_{3-i})\uplus V''_{3-i}$.

(iv) Let $x\in S_i\backslash R_i\subseteq B_i\backslash R_i$. Since $\mathcal{B}$ is a largest blue component, $N_B(x)\subseteq B_{3-i}$. If $N_R(x)\cap R_{3-i}\neq\emptyset$, then since $\mathcal{R}$ is a largest red component, $x\in R_i$, a contradiction. Thus $N_R(x)\subseteq V_{3-i}\backslash R_{3-i}=(S_{3-i}\backslash R_{3-i})\uplus V''_{3-i}$.

(v) Let $x\in (S_i\cap R_i)\backslash T_i$. Since $\mathcal{B}$ is a largest blue component, $N_B(x)\subseteq B_{3-i}$. Since $T$ is a minimum vertex cover of $\mathcal{R}$, $N_R(x)\subseteq T_{3-i}$.

(vi) If $V''_i=\emptyset$ or $B_{3-i}\cap R_{3-i}=\emptyset$, then we are done. Suppose that $V''_i\neq\emptyset$ and $B_{3-i}\cap R_{3-i}\neq\emptyset$. For any $x\in V''_{3-i}$, by (iii), $N_B(x)\subseteq V_{3-i}\backslash B_{3-i}$ and $N_R(x)\subseteq V_{3-i}\backslash R_{3-i}$, then $N_G(x)\cap(B_{3-i}\cap R_{3-i})=\emptyset$. Thus $G[V''_i, B_{3-i}\cap R_{3-i}]=\emptyset$.
\end{proof}

By Theorem \ref{EK},
\begin{equation}\label{sa2}
|S_1|+|S_2|=|S|=\alpha'(\mathcal{B})<(1+\gamma)n,
\end{equation}
and
\begin{equation}\label{sa3}
|T_1|+|T_2|=|T|=\alpha'(\mathcal{R})<(1+\gamma)m.
\end{equation}
Then
\begin{equation}\label{sa4}
|S\cup T|\leq|S|+|T|<(1+\gamma)(m+n).
\end{equation}
For each $i\in[2]$, by the hypothesis, we have that
\begin{equation}\label{sa1}
|B_i|=|S_i|+|B'_i|\geq(1+\gamma)n,
\end{equation}
then $B'_i\neq\emptyset$. Since $B'_i\subseteq B_i\cap R_i$, by Claim \ref{s0}(vi) and Fact \ref{pr1}, we have that
\begin{equation}\label{sa6}
|V''_i|<(\frac{1}{4}-\gamma)N.
\end{equation}

For each $i\in[2]$, if $x\in B'_i\backslash T_i$, by Claim \ref{s0}(i), $N_G(x)\subseteq S_{3-i}\cup T_{3-i}$, then $|S_{3-i}\cup T_{3-i}|\geq\delta(G)$. If $B'_i\backslash T_i\neq\emptyset$ for each $i\in[2]$, then $|S\cup T|=|S_1\cup T_1|+|S_2\cup T_2|\geq2\delta(G)>(\frac{3}{2}+2\gamma)N$, a contradiction to inequality (\ref{sa4}). Thus either $B'_1\subseteq T_1$ or $B'_2\subseteq T_2$. Without loss of generality, assume that
\begin{equation}\label{sa5}
B'_1\subseteq T_1.
\end{equation}
Let $S'_2=S_2\backslash R_2$, $SR_2=(S_2\cap R_2)\backslash T_2$, $ST_2=S_2\cap T_2$ and $B''_2=B'_2\backslash T_2$. Now
\begin{equation}\label{sa10}
V_1=(S_1\backslash T_1)\uplus T_1\uplus R'_1\uplus V''_1.
\end{equation}
and
\begin{equation}\label{sa11}
V_2=S'_2\uplus SR_2\uplus B''_2\uplus T_2\uplus R'_2\uplus V''_2.
\end{equation}

For any $x\in R'_1$, by Claim \ref{s0}(ii), $N_G(x)\subseteq T'_2\uplus R'_2\uplus V''_2$, then $N_G(x)\cap(S'_2\uplus SR_2\uplus B''_2)=\emptyset$ by (\ref{sa11}). Thus $G[R'_1, S'_2\uplus SR_2\uplus B''_2]=\emptyset$. Since $B_2\cap R_2=ST_2\uplus SR_2\uplus B'_2$, by Claim \ref{s0}(vi), $G[V''_1, ST_2\uplus SR_2\uplus B'_2]=\emptyset$. Thus
\begin{equation}\label{sa12}
G[R'_1\uplus V''_1, SR_2\uplus B''_2]=\emptyset.
\end{equation}

Next we split our argument into two cases.

{\bf Case 1.} $B'_1\subseteq T_1$ and $B'_2\backslash T_2\neq\emptyset$.

In this case, $B''_2=B'_2\backslash T_2\neq\emptyset$. For any $x\in B''_2$, $N_G(x)\subseteq S_1\cup T_1$ by Claim \ref{s0}(i). Then
\begin{equation}\label{sa16}
|S_1\cup T_1|\geq\delta(G)>(\frac{3}{4}+\gamma)N.
\end{equation}

\begin{claim}\label{sc2}
The following holds.

{\rm (i)} $R'_1\uplus V''_1=\emptyset$. Furthermore, $V_1=S_1\cup T_1$.

{\rm (ii)} $R'_2=\emptyset$.
\end{claim}
\begin{proof} (i) Suppose that $R'_1\uplus V''_1\neq\emptyset$. By (\ref{sa12}) and Fact \ref{pr1}, $|SR_2\uplus B''_2|<(\frac{1}{4}-\gamma)N$. Note that
$|S_2\cup T_2|=|S\cup T|-|S_1\cup T_1|\underset{(\ref{sa16})}{\overset{(\ref{sa4})}{<}}
(1+\gamma)(m+n)-(\frac{3}{4}+\gamma)N=\frac{m+n+3}{4}+\gamma$. Then
$|B_2\cup T_2|=|S'_2\uplus SR_2\uplus B''_2\uplus T_2|\leq|S_2\cup T_2|+|SR_2\uplus B''_2|
<\frac{m+n+3}{4}+\gamma+(\frac{1}{4}-\gamma)N=(\frac{1}{2}-\gamma)(m+n)+2\gamma+\frac{1}{2}$, and so
$|R'_2|\overset{(\ref{sa11})}{=}|V_2\backslash V''_2|-|B_2\cup T_2|\overset{(\ref{sa6})}{>}(\frac{3}{4}+\gamma)N-|B_2\cup T_2|>\frac{N}{4}+2\gamma(m+n)-3\gamma-1>\frac{N}{4}$ since $m>n>1+\frac{1}{\gamma}$.

For any $x\in R'_2$, by Claim \ref{s0}(ii), $N_G(x)\subseteq T_1\uplus R'_1\uplus V''_1$, then $N_G(x)\cap(S_1\backslash T_1)=\emptyset$ by (\ref{sa10}). Thus $G[S_1\backslash T_1, R'_2]=\emptyset$. If $S_1\backslash T_1\neq\emptyset$, then by Fact \ref{pr1}, $|R'_2|<(\frac{1}{4}-\gamma)N$, contradicting that $|R'_2|>\frac{N}{4}$. Thus $S_1\subseteq T_1$, and so  $|T_1|\overset{(\ref{sa16})}{>}(\frac{3}{4}+\gamma)N\overset{(\ref{ssa1})}{>}(1+\gamma)m$, a contradiction to inequality (\ref{sa3}). Thus $R'_1\uplus V''_1=\emptyset$. By (\ref{sa10}), we have that $V_1=S_1\cup T_1$.

(ii) Suppose that $R'_2\neq\emptyset$. Let $x\in R'_2$, by (i) and Claim \ref{s0}(ii), $N_G(x)\subseteq T_1$. Then $|T_1|\geq\delta(G)>(\frac{3}{4}+\gamma)N\overset{(\ref{ssa1})}{>}(1+\gamma)m$, a contradiction to inequality (\ref{sa3}). Thus $R'_2=\emptyset$.
\end{proof}

By Claim \ref{sc2}(i), $|S_2\cup T_2|=|S\cup T|-|S_1\cup T_1|=|S\cup T|-|V_1|\overset{(\ref{sa4})}{<}\gamma(m+n)+1$. By Claim \ref{sc2}(ii), $|B''_2|\overset{(\ref{sa11})}{=}|V_2\backslash V''_2|-|S_2\cup T_2|\overset{(\ref{sa6})}{>}
(\frac{3}{4}+\gamma)N-|S_2\cup T_2|>\frac{3}{4}(m+n-1)-\gamma-1$.
By Claim \ref{sc2}(i), $|T_1\backslash S_1|=|V_1\backslash S_1|\overset{(\ref{sa2})}{>}m-\gamma n-1$ and $|S_1\backslash T_1|=|V_1\backslash T_1|\overset{(\ref{sa3})}{>}n-\gamma m-1$. Combining with Claim \ref{s0}(i), the coloring is a $\gamma$-coloring as witnessed by $B''_2 \subseteq V_2$ and the partition $\{S_1\backslash T_1, S_1\cap T_1, T_1\backslash S_1\}$ of $V_1$.

{\bf Case 2.} For each $i\in[2]$, $B'_i\subseteq T_i$.

For each $i\in[2]$, we have that $B_i\subseteq S_i\cup T_i$. If $x\in S_i\backslash R_i$, by Claim \ref{s0}(iv), $N_G(x)\subseteq B_{3-i}\uplus V''_{3-i}$, then $|B_{3-i}\uplus V''_{3-i}|\geq\delta(G)$, and so
$|S_{3-i}\cup T_{3-i}|\geq|B_{3-i}|\geq\delta(G)-|V''_{3-i}|\overset{(\ref{sa6})}{>}(\frac{1}{2}+2\gamma)N.$
If $x\in(S_i\cap R_i)\backslash T_i$, by Claim \ref{s0}(v), $N_G(x)\subseteq B_{3-i}\cup T_{3-i}$, then $|S_{3-i}\cup T_{3-i}|=|B_{3-i}\cup T_{3-i}|\geq\delta(G)>(\frac{3}{4}+\gamma)N>(\frac{1}{2}+2\gamma)N$ since $\gamma<\frac{1}{4}$.
Suppose that $S_i\backslash T_i=(S_i\backslash R_i)\uplus((S_i\cap R_i)\backslash T_i)\neq\emptyset$ for each $i\in[2]$, then each $|S_i\cup T_i|>(\frac{1}{2}+2\gamma)N$, and so $|S\cup T|=|S_1\cup T_1|+|S_2\cup T_2|>(1+4\gamma)N>(1+\gamma)(m+n)$ since $m>n\geq1+\frac{1}{\gamma}$, a contradiction to inequality (\ref{sa4}).
Thus either $S_1\subseteq T_1$ or $S_2\subseteq T_2$. Without loss of generality, assume that $S_1\subseteq T_1$. Combining with (\ref{sa5}), we have that
\begin{equation}\label{sa13}
B_1\subseteq T_1,
\end{equation}
then
\begin{equation}\label{sa18}
V_1=R_1\uplus V''_1= T'_1\uplus R'_1\uplus V''_1.
\end{equation}

Now we split the remainder into two cases.

{\bf Subcase 2.1.} For each $i\in[2]$, $S_i\subseteq T_i$.

In this case, we have that $B_i\subseteq T_i$ for each $i\in[2]$.

\begin{claim}\label{sc1.1}
For each $i\in[2]$, the following holds.

{\rm (i)} $V''_i=\emptyset$. Furthermore, $V_i=R_i=T_i\uplus R'_i$.

{\rm (ii)} $|R'_i|>(1+\gamma)n$.
\end{claim}
\begin{proof} (i) Let $i\in[2]$. Since $B_i\subseteq T_i$, by Claim \ref{s0}(vi), $G[V''_i, B_{3-i}]=\emptyset$. Now $V''_i=\emptyset$, otherwise by Fact \ref{pr1}, $|B_{3-i}|<(\frac{1}{4}-\gamma)N\leq(1+\gamma)n$ since $3n>m>1+\frac{1}{\gamma}$, a contradiction to inequality (\ref{sa1}). Thus $V_i=R_i=T_i\uplus R'_i$ for each $i\in[2]$.

(ii) Let $i\in[2]$. Since $B_{3-i}\subseteq T_{3-i}$, $|T_{3-i}|\geq|B_{3-i}|\overset{(\ref{sa1})}{\geq}(1+\gamma)n$, and so $|T_i|\overset{(\ref{sa3})}{<}(1+\gamma)m-|T_{3-i}|\leq(1+\gamma)(m-n)$. By (i), $|R'_i|=|V_i\backslash T_i|>(1+\gamma)n+n-\gamma m-1>(1+\gamma)n$ since $3n>m>1+\frac{1}{\gamma}$ and $\gamma<\frac{1}{4}$.
\end{proof}

Without loss of generality, by inequality (\ref{sa3}), we can assume that $|T_1|<\frac{1+\gamma}{2}m$. By Claim \ref{sc1.1}(i), $|R'_1|=|V_1\backslash T_1|>\frac{1-\gamma}{2}m+n-1>(\frac{1}{2}-2\gamma)N$. By Claim \ref{s0}(ii), $G[R'_1, R'_2]\subseteq G_B$. By Fact \ref{pr2}(i), $R'_2$ is contained in a blue component of $G$, say $\mathcal{H}_1$.

For any $x\in R'_1$, by Claim \ref{s0}(ii), $N_R(x)\subseteq T_2$ and $N_B(x)\subseteq T'_2\uplus R'_2$ by Claim \ref{sc1.1}(i), then $|N_B(x)\cap R'_2|\geq\delta(G)-|T_2|\overset{(\ref{sa3})}{>}(\frac{3}{4}+\gamma)N-(1+\gamma)m\overset{(\ref{ssa1})}{>}0$. Since $R'_2\subseteq \mathcal{H}_1\cap V_2$, $R'_1\subseteq \mathcal{H}_1\cap V_1$. For each $i\in[2]$, $|\mathcal{H}_1\cap V_i|\geq|R'_i|>(1+\gamma)n$ by Claim \ref{sc1.1}(ii).

By Claim \ref{sc1.1}(i), $|\mathcal{H}_1|\geq|R'_1|+|R'_2|=|V_1\backslash T_1|+|V_2\backslash T_2|=2N-|T|\overset{(\ref{sa3})}{>}2(n-1)+(1-\gamma)m$. Recall that $B_i\subseteq T_i$ for each $i\in[2]$, then $|\mathcal{B}|=|B_1|+|B_2|\leq|T_1|+|T_2|\overset{(\ref{sa3})}{<}(1+\gamma)m$. Since $3n>m>1+\frac{1}{\gamma}$, $|\mathcal{B}|<(1+\gamma)m<2(n-1)+(1-\gamma)m<|\mathcal{H}_1|$, a contradiction to the maximality of $\mathcal{B}$.

{\bf Subcase 2.2.} $S_1\subseteq T_1$ and $S_2\backslash T_2\neq\emptyset$.

\begin{claim}\label{sc1.2}
The following holds.

{\rm (i)} $V''_2=\emptyset$.

{\rm (ii)} $SR_2\uplus B''_2=\emptyset$. Furthermore, $V_2=S'_2\uplus T_2\uplus R'_2$.
\end{claim}
\begin{proof} (i) By (\ref{sa13}) and Claim \ref{s0}(vi), $G[B_1, V''_2]=\emptyset$. Now $V''_2=\emptyset$, otherwise by Fact \ref{pr1}, $|B_1|<(\frac{1}{4}-\gamma)N<(1+\gamma)n$ since $3n>m>1+\frac{1}{\gamma}$, contradicting to inequality (\ref{sa1}).

(ii) Note that $|R'_1\uplus V''_1|\overset{(\ref{sa18})}{=}|V_1\backslash T_1|\overset{(\ref{sa3})}{>}n-\gamma m-1>(\frac{1}{4}-\gamma)N$ since $3n>m>1+\frac{1}{\gamma}$. Then $SR_2\uplus B''_2=\emptyset$, otherwise by (\ref{sa12}) and Fact \ref{pr1}, $|R'_1\uplus V''_1|<(\frac{1}{4}-\gamma)N$, a contradiction. Combining with (i) and (\ref{sa11}), we have that $V_2=S'_2\uplus T_2\uplus R'_2$.
\end{proof}

Note that $|R_1|\overset{(\ref{sa18})}{=}|V_1\backslash V''_1|\overset{(\ref{sa6})}{>}(\frac{3}{4}+\gamma)N
\overset{(\ref{ssa1})}{>}(1+\gamma)m\overset{(\ref{sa3})}{>}|T_1|$.
Thus $R'_1=R_1\backslash T_1\neq\emptyset$. For any $x\in R'_1$, by Claim \ref{s0}(ii) and Claim \ref{sc1.2}(i), $N_G(x)\subseteq R_2$. Thus $|R_2|\geq\delta(G)>(\frac{3}{4}+\gamma)N$. By Claim \ref{sc1.2}(ii),
\begin{equation}\label{sa15}
|S'_2|=|V_2|-|R_2|<(\frac{1}{4}-\gamma)N.
\end{equation}
By Claim \ref{sc1.2}(ii), $S'_2=S_2\backslash R_2=S_2\backslash T_2\neq\emptyset$. For any $x\in S'_2$, by Claim \ref{s0}(iv) and (\ref{sa13}), $N_G(x)\subseteq B_1\uplus V''_1$. Then $|B_1\uplus V''_1|\geq\delta(G)$, and so $|T_1|\overset{(\ref{sa13})}{\geq}|B_1|\geq\delta(G)-|V''_1|
\overset{(\ref{sa6})}{>}(\frac{1}{2}+2\gamma)N$. Then $|T_2|\overset{(\ref{sa3})}{<}(1+\gamma)m-|T_1|<(1+\gamma)m-(\frac{1}{2}+2\gamma)N$, and so
\begin{equation}\label{sa14}
|R'_2|=|R_2\backslash T_2|>(\frac{3}{4}+\gamma)N-|T_2|
>(\frac{5}{4}+3\gamma)(n-1)+(\frac{1}{4}+2\gamma)m.
\end{equation}
Since $3n>m>1+\frac{1}{\gamma}$, $|R'_2|\overset{(\ref{sa14})}{>}(\frac{1}{2}-2\gamma)N$. By Claim \ref{s0}(ii), $G[R'_1\uplus V''_1, R'_2]\subseteq G_B$. By Fact \ref{pr2}(i), $R'_1\uplus V''_1$ is contained in a blue component of $G$, say $\mathcal{H}_2$.

For any $x\in R'_2$, by Claim \ref{s0}(ii), $N_B(x)\subseteq T'_1\uplus R'_1\uplus V''_1$ and $N_R(x)\subseteq T_1$, then $|N_B(x)\cap(R'_1\uplus V''_1)|\geq\delta(G)-|T_1|
\overset{(\ref{sa3})}{>}(\frac{3}{4}+\gamma)N-(1+\gamma)m\overset{(\ref{ssa1})}{>}0$. Since $R'_1\uplus V''_1\subseteq \mathcal{H}_2\cap V_1$, $R'_2\subseteq \mathcal{H}_2\cap V_2$.
Since $m>n\geq1+\frac{1}{\gamma}$, $|\mathcal{H}_2\cap V_2|\geq|R'_2|\overset{(\ref{sa14})}{>}(1+\gamma)n$. By Claim \ref{sc1.2}(ii),
$|T_2|\geq|ST_2\uplus B'_2|=|B_2\backslash S'_2|
\underset{(\ref{sa15})}{\overset{(\ref{sa1})}{>}}(1+\gamma)n-(\frac{1}{4}-\gamma)N$, then $|T_1|\overset{(\ref{sa3})}{<}(1+\gamma)m-|T_2|<(1+\gamma)(m-n)+(\frac{1}{4}-\gamma)N$. Then
$|\mathcal{H}_2\cap V_1|\geq|R'_1\uplus V''_1|\overset{(\ref{sa18})}{=}|V_1\backslash T_1|
>(1+\gamma)n+(\frac{3}{4}+\gamma)N-(1+\gamma)m\overset{(\ref{ssa1})}{>}(1+\gamma)n$.
Now $|\mathcal{H}_2\cap V_i|>(1+\gamma)n$ for each $i\in[2]$.

By Claim \ref{sc1.2}(ii), $|\mathcal{H}_2|\geq|R'_1\uplus V''_1|+|R'_2|\overset{(\ref{sa18})}{=}|V_1\backslash T_1|+|V_2\backslash(S'_2\uplus T_2)|=2N-|T|-|S'_2|\underset{(\ref{sa15})}{\overset{(\ref{sa3})}{>}}
(\frac{7}{4}+\gamma)(n-1)+\frac{3}{4}m$ and
$|\mathcal{B}|=|B_1|+|B_2|\overset{(\ref{sa13})}{\leq}|T_1|+|S'_2\uplus T_2|=|S'_2|+|T|
\underset{(\ref{sa15})}{\overset{(\ref{sa3})}{<}}(\frac{1}{4}-\gamma)(n-1)+\frac{5}{4}m$. Since $3n>m>1+\frac{1}{\gamma}$, $|\mathcal{B}|<(\frac{1}{4}-\gamma)(n-1)+\frac{5}{4}m<
(\frac{7}{4}+\gamma)(n-1)+\frac{3}{4}m<|\mathcal{H}_2|$, a contradiction to the maximality of $\mathcal{B}$.
\hfill$\Box$

\section{Monochromatic Cycles}

\ For completeness, we will explain how to expand the large monochromatic connected matchings in the auxiliary graph into the monochromatic cycles in the initial graph in this section. The method was initially introduced by {\L}uczak \cite{Luc1999}.

Given a graph $G$, let $X$ and $Y$ be disjoint subsets of $V(G)$. The {\em density} of the pair $(X, Y)$ is the value $d(X, Y)=\dfrac{e(G[X, Y])}{|X||Y|}$. For $\epsilon>0$, the pair $(X, Y)$ is called {\em $\epsilon$-regular} for $G$ if $|d(X, Y)-d(X', Y')|<\epsilon$ for any $X'\subseteq X$ and $Y'\subseteq Y$ with $|X'|>\epsilon|X|$ and $|Y'|>\epsilon|Y|$.

\begin{fact}\label{fact}
Let $(U, V)$ be an $\epsilon$-regular pair with density d and $V'\subseteq V$ with $|V'|>\epsilon|V|$, then all but at most $\epsilon|U|$ vertices $u\in U$ satisfying $|N(u)\cap V'|>(d-\epsilon)|V'|$.
\end{fact}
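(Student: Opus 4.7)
The plan is to prove Fact \ref{fact} by a direct contradiction argument against the $\epsilon$-regularity hypothesis, which is the standard way to extract vertex-degree information from a regularity condition stated in terms of edge densities.

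First, I would define the set of ``bad'' vertices as $U' = \{u \in U : |N(u) \cap V'| \le (d-\epsilon)|V'|\}$, and suppose toward a contradiction that $|U'| > \epsilon|U|$. The goal is then to compute $d(U', V')$ in two incompatible ways. On the one hand, since both $|U'| > \epsilon|U|$ and $|V'| > \epsilon|V|$, the $\epsilon$-regularity of $(U, V)$ applied to the pair $(U', V')$ yields $|d(U', V') - d| < \epsilon$, so in particular $d(U', V') > d - \epsilon$. On the other hand, summing degrees into $V'$ over $u \in U'$ gives
\[
e(G[U', V']) = \sum_{u \in U'} |N(u) \cap V'| \le |U'|(d-\epsilon)|V'|,
\]
so $d(U', V') = e(G[U', V'])/(|U'||V'|) \le d - \epsilon$. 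These two estimates contradict each other, which forces $|U'| \le \epsilon|U|$ as claimed.

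There is no real obstacle here; the argument is a routine density-averaging contradiction. The only thing to be careful about is the direction of the inequality in the definition of $U'$ (non-strict $\le$ in the ``bad'' set so that the conclusion gives a strict $>$ for the remaining vertices) and the fact that the hypothesis of $\epsilon$-regularity requires both subsets to be of proportional size at least $\epsilon$, which is exactly what the contradiction assumption provides on the $U'$ side and the hypothesis $|V'| > \epsilon|V|$ provides on the $V'$ side. No additional lemma or external tool is needed.
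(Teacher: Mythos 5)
Your proof is correct and is the standard density-averaging argument for this fact; the paper itself states Fact \ref{fact} without proof, as it is a routine consequence of the definition of $\epsilon$-regularity. Your handling of the strict versus non-strict inequalities (so that the contradiction assumption $|U'|>\epsilon|U|$ legitimately triggers the regularity condition, and the surviving vertices get the strict bound) is exactly right.
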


Let $G$ be a graph and $c$ be a 2-edge-coloring of $G$. For a subgraph $G'$ of $G$, the edge-coloring $c$ restricted to $E(G')$ is called an induced 2-edge-coloring of $G'$.

We will use the following bipartite degree form for 2-colored regularity lemma adapted to our needs.

\begin{lemma}[2-colored Regularity Lemma-Bipartite Degree Form \cite{Sze1976}]\label{reg} For any $\epsilon>0$ and positive integer $k_0$, there exists an $M=M(\epsilon, k_0$) such that for any $2$-edge-colored balanced bipartite graph $G[X, Y]$ on order $2N\geq M$ and any $d\in[0,1]$, there exists an integer $k$, a partition $\{X_0, X_1, \ldots, X_k\}$ of $X$, a partition $\{Y_0, Y_1, \ldots, Y_k\}$ of $Y$, and a subgraph $G'\subseteq G$ with the following properties:

{\rm(i)} $|X_0|=|Y_0|\leq\epsilon N$.

{\rm(ii)} $k_0\leq k\leq M$.

{\rm(iii)} For any $1\leq i, j\leq k$, $|X_i|=|Y_j|=n$.

{\rm(iv)} For any $v\in V(G)$, $d_{G'}(v)>d_{G}(v)-(2d+\epsilon)N$.

{\rm(v)} For any $1\leq i, j\leq k$, the pair $(X_i, Y_j)$ is $\epsilon$-regular for $G'_R$ with density either $0$ or greater than $d$, and $\epsilon$-regular for $G'_B$ with density either $0$ or greater than $d$, where $E(G')=E(G'_R)\cup E(G'_B)$ is the induced $2$-edge-coloring of $G'$.
\end{lemma}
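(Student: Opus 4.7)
The plan is to derive this bipartite degree form as a routine consequence of the standard bipartite Szemer\'edi Regularity Lemma, followed by a cleanup step. First I would invoke a bipartite SRL with parameter $\epsilon'\ll\epsilon$ and lower bound $k_0$ on the red subgraph $G_R$ to obtain an equitable partition $\{X_1,\ldots,X_{k_1}\}$ of $X$ and $\{Y_1,\ldots,Y_{k_1}\}$ of $Y$ (with equal part-sizes and exceptional sets of size at most $\epsilon' N$) that is $\epsilon'$-regular for $G_R$; then refine this partition by applying the bipartite SRL to $G_B$. With $\epsilon'$ chosen small enough and the iteration handled by the usual argument, this yields a single equitable partition $\{X_0,X_1,\ldots,X_k\}$ of $X$ and $\{Y_0,Y_1,\ldots,Y_k\}$ of $Y$ simultaneously satisfying (i)--(iii) and $\epsilon$-regular for both $G_R$ and $G_B$. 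The balance $|X_0|=|Y_0|$ follows from $|X|=|Y|=N$ and a common part-size $n$.

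Next I would construct $G'\subseteq G$ by the standard cleanup: delete every edge incident to $X_0\cup Y_0$, and, for each color $c\in\{R,B\}$ and each pair $(X_i,Y_j)$ with $i,j\ge 1$ whose color-$c$ density is at most $d$, delete every $c$-colored edge in that pair. Property (v) is then immediate: any surviving pair in color $c$ is $\epsilon$-regular (inherited from the partition) and has density exceeding $d$.

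The degree inequality (iv) is the main verification. Each vertex $v$ loses at most $|X_0|+|Y_0|\le 2\epsilon' N$ edges to the exceptional parts. For each color $c$, and for each low-density $c$-pair $(X_i,Y_j)$ meeting the side of $v$, at most $d\cdot n$ $c$-colored edges at $v$ lie in that pair, so the total $c$-loss at $v$ from low-density pairs is at most $d\cdot N$; summing the two colors gives at most $2dN$. Adding the exceptional contribution, the total loss at $v$ is bounded by $(2d+2\epsilon')N$, which becomes the stated $(2d+\epsilon)N$ after taking $\epsilon'\le\epsilon/2$.

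The step I expect to be most delicate is producing a single equitable partition that is $\epsilon$-regular for both $G_R$ and $G_B$ with $|X_i|=|Y_j|=n$ for all $i,j\in[k]$. This is handled by the standard refinement-and-iteration argument: a refinement of an $\epsilon'$-regular partition for $G_R$ remains regular for $G_R$, while additional refinement via the SRL applied to $G_B$ introduces regularity for the second color; the bipartite-balanced structure $|X_i|=|Y_j|$ is maintained by routine bookkeeping inherited from the bipartite version of Szemer\'edi's proof, and no new conceptual difficulty arises beyond the original argument.
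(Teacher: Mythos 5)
The paper states this lemma without proof (it is cited as a standard form of Szemer\'edi's lemma), so there is no internal argument to compare against; judged on its own terms, your derivation has genuine gaps. The most serious is that you never address irregular pairs. The regularity lemma, in any of its forms, only guarantees that all but at most $\epsilon'k^2$ of the pairs $(X_i,Y_j)$ are $\epsilon'$-regular in each colour, and some irregular pairs are unavoidable (half-graphs); property (v), however, demands that \emph{every} pair with $i,j\ge 1$ be $\epsilon$-regular in both $G'_R$ and $G'_B$. Your cleanup deletes nothing from irregular pairs, so the claim that regularity of surviving pairs is ``inherited from the partition'' fails exactly for them. The standard fix is to delete all colour-$c$ edges of every pair that is irregular for colour $c$ (a pair of density $0$ is vacuously regular) and to control the resulting degree loss by first moving into the exceptional sets every cluster lying in more than, say, $\sqrt{\epsilon'}k$ irregular pairs; neither step appears in your write-up, and the second is what keeps $|X_0|,|Y_0|\le\epsilon N$ while making the per-vertex loss from irregular pairs at most about $\sqrt{\epsilon'}N$.

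Second, the verification of (iv) --- which you rightly call the main point --- contains a false step: density is an average, so a vertex $v\in X_i$ can have all $n$ of its neighbours in $Y_j$ coloured $c$ even when $(X_i,Y_j)$ has colour-$c$ density at most $d$. The assertion ``at most $d\cdot n$ $c$-coloured edges at $v$ lie in that pair'' holds only on average over $v$, and crude averaging yields a per-vertex loss of order $dN/\epsilon$ rather than $dN$. The correct argument uses the $\epsilon'$-regularity of each low-density pair to show that all but $\epsilon'n$ vertices of $X_i$ have colour-$c$ degree at most $(d+\epsilon')n$ into $Y_j$, and then moves the few atypical vertices into the exceptional set (restoring their edges). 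Third, deleting every edge incident to $X_0\cup Y_0$ forces $d_{G'}(v)=0$ for exceptional vertices, while (iv) is asserted for \emph{all} $v\in V(G)$; you must keep those edges, which is harmless since (v) says nothing about pairs meeting $X_0$ or $Y_0$. Finally, a smaller point: ``apply the regularity lemma to $G_R$ and then refine for $G_B$'' is not the usual argument and does not work as literally described, since regularity is not inherited by refinements whose parts are much smaller than $\epsilon'$ times the old ones; the two-colour statement is obtained by running a single energy-increment argument on the sum of the two colour energies.
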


\begin{defi}[($\epsilon$, d)-reduced graph]\label{reduced} Given a bipartite graph $G[X, Y]$, a partition $\{X_0, X_1, \ldots, X_k\}$ of $X$ and a partition $\{Y_0, Y_1, \ldots, Y_k\}$ of $Y$ satisfying properties {\rm(i)-(v)} of Lemma \ref{reg}, we define the $(\epsilon, d)$-reduced $2$-colored bipartite graph $\Gamma$ on vertex set $\{x_i: i\in[k]\}\uplus\{y_j: j\in[k]\}$ as follows. For any $1\leq i, j\leq k$,

$\bullet$ let $x_i y_j$ be a red edge of $\Gamma$ when $G'_{R}[X_i, Y_j]$ has density at least $d$;

$\bullet$ let $x_i y_j$ be a blue edge of $\Gamma$ when $G'_{B}[X_i, Y_j]$ has density at least $d$.
\end{defi}

The next lemma [\cite{BLS2012}, Lemma 2.2] due to Benevides, {\L}uczak, Scott, Skokan and White guarantees a long monochromatic path in a regular pair.

\begin{lemma}[Benevides, {\L}uczak, Scott, Skokan and White \cite{BLS2012}]\label{path}
For every $0<\beta<1$, there is an $m_0(\beta)$ such that for every $m>m_0(\beta)$ the following holds: Let $G$ be a graph, and let $V_1$, $V_2$ be disjoint subsets of $V(G)$ such that $|V_1|, |V_2|\geq m$. Furthermore let the pair $(V_1, V_2)$ be $\epsilon$-regular for G with density at least $\frac{\beta}{4}$ for some $0<\epsilon<\frac{\beta}{4}$. Then for every pair of vertices $v_1\in V_1$, $v_2\in V_2$ satisfying $|N_G(v_1)\cap V_2|, |N_G(v_2)\cap V_1|\geq\frac{\beta m}{5}$, and for every $1\leq l\leq m-\frac{5\epsilon m}{\beta}$, $G$ contains a path of length $2l+1$ connecting $v_1$ and $v_2$.
\end{lemma}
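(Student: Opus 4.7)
The plan is to construct the desired path of length $2l+1$ from $v_1$ to $v_2$ by extending greedily from both endpoints, finishing with a meeting step that exploits $\epsilon$-regularity. Set $d:=\beta/4$, so the pair $(V_1,V_2)$ has density at least $d$ and $\epsilon<d$. Call a vertex $u\in V_i$ \emph{typical} if $|N_G(u)\cap V_{3-i}|\geq (d-\epsilon)|V_{3-i}|$; by Fact~\ref{fact}, each $V_i$ has at most $\epsilon m$ atypical vertices.

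First, I would fix the vertices of the path adjacent to $v_1$ and $v_2$. Since $|N_G(v_1)\cap V_2|\geq \beta m/5$ is strictly greater than $\epsilon m$ (the slack coming from the gap between $\beta/5$ and $\beta/4$), at least one typical vertex $b\in N_G(v_1)\cap V_2\setminus\{v_2\}$ exists; similarly, pick a typical $a\in N_G(v_2)\cap V_1\setminus\{v_1\}$. The problem reduces to finding a path of length $2l-1$ from $b\in V_2$ to $a\in V_1$ inside $G-\{v_1,v_2\}$.

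I would then build two half-paths $b=u_0,u_1,\dots,u_{l-1}$ and $a=t_{l-1},t_{l-2},\dots,t_0$, each of length $l-1$, along disjoint typical unused vertices via greedy alternation. At each extension step, the current typical endpoint has at least $(d-\epsilon)m$ neighbors on the opposite side; after removing atypical vertices and the at most $2l$ already used, the pool of valid next steps remains positive because $l\leq m-5\epsilon m/\beta$. The same slack leaves a reserve of at least $5\epsilon m/\beta>\epsilon m$ candidate meeting vertices $u_{l-1}$ in one part and $t_{l-1}$ in the other (they lie in opposite parts by parity). Applying $\epsilon$-regularity to these two candidate sets, the density between them is at least $d-\epsilon>0$, so some pair $(u_{l-1},t_{l-1})$ is adjacent; inserting this edge produces a path of length $(l-1)+1+(l-1)=2l-1$ from $b$ to $a$, and hence a path of length $2l+1$ from $v_1$ to $v_2$.

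The main obstacle is the joint bookkeeping of the two greedy frontiers and the reserve sets required for the meeting step: at every stage one must verify both that (i) the current endpoint can be extended into the typical unused pool, and (ii) the eventual meeting pools still have size exceeding $\epsilon m$ so that $\epsilon$-regularity produces the connecting edge. The condition $l\leq m-5\epsilon m/\beta$ is precisely tuned so that (i) and (ii) survive all $l-1$ extensions. The boundary case $l=1$ is handled directly: both $N_G(v_1)\cap V_2$ and $N_G(v_2)\cap V_1$ have size at least $\beta m/5>\epsilon m$, so by $\epsilon$-regularity they span an edge, yielding a path $v_1\,b\,a\,v_2$ of length $3$.
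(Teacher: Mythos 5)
First, a remark on scope: the paper does not prove Lemma~\ref{path} at all; it is imported verbatim from \cite{BLS2012} (Lemma 2.2 there), so there is no in-paper proof to match your argument against, and I am judging the proposal on its own merits. The decisive problem is your greedy extension step. You define ``typical'' once and for all, relative to the \emph{whole} opposite class, so a typical endpoint is only guaranteed $(d-\epsilon)|V_{3-i}|\geq(\frac{\beta}{4}-\epsilon)m$ neighbours there. After deleting the at most $\epsilon|V_{3-i}|$ atypical vertices and the vertices of that class already on the path, your pool of admissible next vertices has size at least $(\frac{\beta}{4}-2\epsilon)m-(l+1)$ and nothing better; since $\beta<1$, this is positive only when $l$ is below roughly $\frac{\beta}{4}m$, whereas the lemma demands $l$ all the way up to $m-\frac{5\epsilon m}{\beta}$, i.e.\ paths covering almost all of $V_1\cup V_2$. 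Your claim that positivity follows ``because $l\leq m-5\epsilon m/\beta$'' is simply false: with $\beta=\frac12$, $\epsilon=10^{-3}$ and $l=\frac{m}{2}$ the hypothesis holds but the pool has long been exhausted. So the proposal only yields paths of length $O(\beta m)$, which would not even suffice for the application in the proof of Theorem~\ref{0}, where $l_j$ is taken equal to $(1-\frac{5\eta^2}{4})m$.

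The missing idea is to make typicality relative to the \emph{shrinking} sets of unused vertices: at each step the new endpoint is chosen, among the neighbours of the current endpoint inside the unused part of the other class, so that it additionally sees at least a $(d-\epsilon)$-fraction of the currently unused part of its own opposite class. Fact~\ref{fact} applies to these shrinking sets as long as they retain more than $\epsilon|V_i|$ vertices, and the hypothesis $l\leq m-\frac{5\epsilon m}{\beta}$ is precisely what keeps them, and the two final meeting pools, above this threshold with room to absorb the accumulated $\epsilon|V_i|$-losses; this is how the constant $\frac{5\epsilon m}{\beta}$ actually enters. Two smaller slips: the number of atypical vertices is $\epsilon|V_i|$, not $\epsilon m$, and the inequality $\frac{\beta m}{5}>\epsilon m$ does not follow from $\epsilon<\frac{\beta}{4}$ as your parenthetical suggests --- it holds only because the range $1\leq l\leq m-\frac{5\epsilon m}{\beta}$ is empty unless $\epsilon<\frac{\beta}{5}$, which should be said explicitly; also, the free end of your second half-path is $t_0$, not $t_{l-1}=a$. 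These are repairable, but the extension step as written is not.
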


\noindent{\bf Proof of Theorem \ref{0}.} Assume that $0<\eta<\frac{1}{1000}$ and $N$ is large enough. Let $G[X, Y]$ be a balanced bipartite graph on $2(N-1)$ vertices with $\delta(G)\geq(\frac{3}{4}+3\eta)(N-1)$.

Let $\epsilon=\eta^3$ and $d=\eta$. By Lemma \ref{reg}, there exists a partition $\{U^{(1)}_0, U^{(1)}_1, \ldots, U^{(1)}_{k-1}\}$ of $X$, a partition $\{U^{(2)}_0, U^{(2)}_1, \ldots, U^{(2)}_{k-1}\}$ of $Y$ and a subgraph $G^{'}\subseteq G$ satisfying properties (i)-(v) in Lemma \ref{reg}. Let $\Gamma$ be an $(\epsilon, d)$-reduced 2-colored bipartite graph deduced from $G$ with bipartition $\{u^{(1)}_i: i\in[k-1]\}\uplus\{u^{(2)}_i: i\in[k-1]\}$.

By Lemma \ref{reg}(iv), $\delta(G')>\delta(G)-(2d+\epsilon)(N-1)\geq(\frac{3}{4}+\eta-\eta^3)(N-1)>\frac{3}{4}(N-1)$. For any $1\leq i, j\leq k-1$, by Lemma \ref{reg}(i) and (iii), $(1-\epsilon)\frac{N-1}{k-1}\leq|U^{(1)}_i|=|U^{(2)}_j|=n\leq\frac{N-1}{k-1}$. For any $1\leq i, j\leq k-1$, by Lemma \ref{reg}(v) and Definition \ref{reduced}, $u^{(1)}_iu^{(2)}_j\in E(\Gamma)$ if and only if $G'[U^{(1)}_i, U^{(2)}_j]\neq\emptyset$. Then $\delta(\Gamma)\geq\frac{\delta(G')}{n}>\frac{3}{4}(k-1)$. Thus $\Gamma$ is a 2-colored balanced bipartite graph on 2$(k-1)$ vertices with $\delta(\Gamma)>\frac{3}{4}(k-1)$. By Theorem \ref{1}, each 2-edge-coloring of $\Gamma$ yields a red connected $\lfloor\alpha_1k\rfloor$-matching or a blue connected $\lfloor\alpha_2k\rfloor$-matching.

Suppose that $\Gamma$ contains a red connected $t$-matching $M^*$, where $1\leq t\leq\alpha_1k$. Let $F^*$ be a red minimal tree containing $M^*$. Let $W=u^{(r)}_{i_1}u^{(3-r)}_{i_2}u^{(r)}_{i_3}\cdots u^{(3-r)}_{i_s}u^{(r)}_{i_1}$ be a closed walk in $F^*$ containing $M^*$, then $s\geq2t$. Since $F^*$ is a tree, $W$ must be of even length $s$. Now we view an edge $u^{(p)}_{i_q}u^{(3-p)}_{i_q+1}$ of $W$ as in $M^*$ only when it is an edge in $M^*$ and first appearances in $W$, where $p\in\{1, 2\}$, $q\in[s]$, $i_0=i_{s}$ and $i_{s+1}=i_{1}$.

Applying Fact \ref{fact} repeatedly, for any $q\in[s]$, there exists a vertex $v^{(p)}_{i_q}\in U^{(p)}_{i_q}$, where $p\in\{1, 2\}$, $i_0=i_{s}$ and $i_{s+1}=i_{1}$, such that:

\noindent (i) $v^{(p)}_{i_q}$ has at least $(d-\epsilon)n=(\eta-\eta^3)n\geq\frac{4\eta}{5}n$ red neighbours in both $U^{(3-p)}_{i_{q-1}}$ and $U^{(3-p)}_{i_{q+1}}$;

\noindent (ii) If an edge $u^{(p)}_{i_q}u^{(3-p)}_{i_q+1}$ of $W$ is not in $M^*$, then $v^{(p)}_{i_q}v^{(3-p)}_{i_q+1}$ is a red edge in $G$.

Let $m=(1-\epsilon)\frac{N-1}{k-1}$ and $\beta=4\eta$. By Lemma \ref{path}, we have that for any $1\leq l\leq(1-\frac{5\eta^2}{4})m$, each edge $u^{(p)}_{i_q}u^{(3-p)}_{i_q+1}$ in $M^*$ can be extended a red path of length $2l+1$ connecting vertices $v^{(p)}_{i_q}\in U^{(p)}_{i_q}$ and $v^{(3-p)}_{i_q+1}\in U^{(3-p)}_{i_q+1}$ in $G$. Then there exists a red cycle of each even length $\sum_{j=1}^{t}2l_j+s$, where $1\leq l_j\leq(1-\frac{5\eta^2}{4})m$ for each $j\in[t]$. Let $t=1$, $s=2$, and $l=1$, then there exists a red cycle of length 4. Recall that $N\geq k$. For each $j\in[t]$, let $t=\lfloor\alpha_1k\rfloor$ and $l_j=(1-\frac{5\eta^2}{4})m$. Then
$$\begin{aligned}
\sum_{j=1}^{t}2l_j+s
&=2\sum_{j=1}^{t}l_j+s=2t(1-\frac{5\eta^2}{4})m+s \\
&=2t(1-\frac{5\eta^2}{4})(1-\epsilon)\frac{N-1}{k-1}+s \\
&=2t(1-\frac{5\eta^2}{4})(1-\eta^3)\frac{N-1}{k-1}+s  \\
&>2t(1-\frac{3}{2}\eta^2)\frac{N}{k}+2t\geq(2-3\eta^2)\alpha_1N.
\end{aligned}$$
Therefore there exist red even cycles of each length in $\{4, 6, 8, \ldots, (2-3\eta^2)\alpha_1N\}$.

Suppose that $\Gamma$ contains a blue connected $t$-matching, where $1\leq t\leq\alpha_2k$, then as the same argument above, we have that there exist blue even cycles of each length $\{4, 6, 8, \ldots, (2-3\eta^2)\alpha_2N\}$. \hfill$\Box$

\bigskip

{\bf Remarks.} Our result on cycles (Theorem \ref{0}) is a generalization of the result of DeBiasio and Krueger (Theorem \ref{DKK}, bipartite version) to off-diagonal cases. It would be nice to obtain  a generalization of the result of Balogh, Kostochka, Lavrov, and Liu (Theorem \ref{BK}) to off-diagonal cases. Theorem \ref{0} gives an asymptotic result by establishing the exact result for connected matchings, it would be nice to get an exact result for cycles themselves.


\end{document}